\let\bf\mathbf
\let\fr\frac
\let\eps\varepsilon
\newcommand{\One}{\mathds{1}}
\newcommand{\aplus}{0}
\newcommand{\astart}{0}
\newcommand{\aend}{T}
\newcommand{\spline}[1]{S^{#1}_{\mathcal{A}}}
\newcommand{\dto}{\, \mathrm{d}}
\newcommand{\der}[2]{\frac{\dto #1}{\dto #2}}
\newcommand{\pder}[2]{\frac{\partial #1}{\partial #2}}
\newcommand{\hilfder}[3]{{_#1}\mathrm{D}_{#2}^{#3}}
\newcommand{\rlint}[3]{{_#1}\mathrm{I}_{#2}^{#3}}
\newcommand{\Lim}[1]{\raisebox{0.5ex}{\scalebox{0.8}{$\displaystyle \lim_{#1}\;$}}}
\newtheorem{theorem}{Theorem}[section]
\newtheorem{lemma}{Lemma}[section]
\newtheorem{proposition}{Proposition}[section]
\theoremstyle{definition}
\newtheorem{definition}{Definition}[section]
\theoremstyle{remark}
\newtheorem{remark}{Remark}[section]
\newcommand{\weighnorm}[1]{\|#1\|_{C_{1-\gamma}[0,T]}}
\newcommand{\bigweighnorm}[1]{\biggl\|#1\biggr\|_{C_{1-\gamma}[0,T]}}
\newcommand{\epsweighnorm}[1]{\|#1\|_{C_{1-\gamma}[\eps,T]}}
\newcommand{\bigepsweighnorm}[1]{\biggl\|#1\biggr\|_{C_{1-\gamma}[\eps,T]}}
\newcommand{\deltaprefix}{%
\fr{\Gamma(\zeta+1)t^\alpha}{\Gamma(\alpha+1)T^\zeta}
}
\newcommand{\conmap}[1]{\mathcal{T}#1}
\newcommand{\conmapeval}[1]{\{\conmap{#1}\}(t)}
\newcommand{\splineconmap}[1]{\mathcal{T}^{\eps,q}#1}
\newcommand{\splineconmapeval}[1]{\{\splineconmap{#1}\}(t)}
\newcommand{\epsconmap}[1]{\mathcal{T}^{\eps}#1}
\newcommand{\epsconmapeval}[1]{\{\epsconmap{#1}\}(t)}
\newcommand{\xnull}{\bf{\tilde{x}}_0}
\newcommand{\xm}[1]{\bf{x}_{#1}(t, \xnull)}
\newcommand{\expressionxi}{%
\fr{\Gamma(\gamma)t^\alpha}{\Gamma(\gamma+\alpha)}
+
\fr{(t/T)^{\zeta}T^\alpha}{\Gamma(\alpha+1)}
\biggl[
\left(
1-\gamma-\alpha
\right)
\mathrm{B}_{t/T}(\gamma, \zeta)
+
\zeta
\mathrm{B}_{1-t/T}(\zeta, \gamma)
\biggr]
}
\newcommand{\tacc}{{t'}}
\newcommand{\yvar}{\bf{y}}
\newcommand{\xvar}{\bf{x}}
\newcommand{\xvarlim}{\bf{x}}
\begin{document}
\title{Nonlinear fractional-periodic boundary value problems with Hilfer fractional derivative: existence and numerical approximations of solutions}
%%%%%%%%%%%%%%%%%
%%%%%%%%%%%%%%%%%
\date{\today}
\author[1,2]{Niels Goedegebure\footnote{\url{goed0001@e.ntu.edu.sg}}}
\author[1]{Kateryna Marynets\footnote{\url{k.marynets@tudelft.nl} (corresponding author)}}
\affil[1]{\small Delft Institute of Applied Mathematics, Faculty of Electrical Engineering, Mathematics and Computer Science, Delft University of Technology. Mekelweg 4, 2628CD Delft, The Netherlands
}
\affil[2]{\small Division of Mathematical Sciences, School of Physical and Mathematical Sciences, Nanyang Technological University, 21 Nanyang Link, Singapore 637371, Singapore\footnote{Present address}
}
\maketitle
%% Abstract
\begin{abstract}
{\normalsize \noindent 
We prove conditions for existence of analytical solutions for boundary value problems with the Hilfer fractional derivative, generalizing the commonly used Rie\-mann-Liouville and Caputo {operators}.
The boundary values, referred to in this paper as fractional-periodic, are fractional integral conditions generalizing recurrent solution values for the non-Caputo case of the Hilfer fractional derivative.
Analytical solutions to the studied problem are obtained using a perturbation of the corresponding initial value problem with enforced boundary conditions.
In general, solutions to the boundary value problem are singular for $t\downarrow 0$. 
To overcome this singularity, we construct a sequence of converging solutions in a weighted continuous function space.
We present a Bernstein splines-based implementation to numerically approximate solutions.
We prove convergence of the numerical method, providing convergence criteria and asymptotic convergence rates.
Numerical examples show empirical convergence results corresponding with the theoretical bounds.
Moreover, the method is able to approximate the singular behavior of solutions and is demonstrated to converge for nonlinear problems.
Finally, we apply a grid search to obtain correspondence to the original, non-perturbed system.%
}
\end{abstract}
%% Keywords
\noindent
\textbf{Keywords:}
fractional boundary value problems,
existence and uniqueness,
Hilfer frac\-tional derivative,
splines,
numerical approximation
%%%
\section{Introduction}
Fractional calculus generalizes classical differentiation and integration to non-integer orders, offering tools to model memory and hereditary effects in dynamical systems \cite{Kilbas2006, Diethelm2010}. 
Fractional differential equations (FDEs), which involve derivatives of order $\alpha \in \mathbb{R}_{>0}$, are extensively used in fields such as control theory, viscoelasticity and anomalous diffusion \cite{Podlubny1999control, Monje2010,Podlubny1999,Hilfer2000,METZLER20001}.
In particular, fractional boundary value problems (FBVPs) study the solutions to nonlocal FDEs with boundary constraints.
Specifically of interest are FBVPs with the same boundary conditions on all boundaries, as solutions to FDEs are in general shown to be non-periodic \cite{Tavazoei2009_nonex_per, Kaslik2012, GGKM2025}.
In what follows, we will denote this class of FBVPs as {\textit{fractional-periodic}} BVPs.

Two of the most commonly used fractional differential operators are those of the Riemann-Liouville and the Caputo type, as introduced and discussed extensively in e.g. \cite{Kilbas2006,Podlubny1999}.
The Caputo fractional derivative provides two beneficial properties for application in physical systems:
it allows for {integer-order} initial or boundary conditions and it satisfies the integer-order intuition that constant functions are in the operator kernel set of the fractional derivative.
The Riemann-Liouville operator on the other hand requires fractional-order initial or boundary conditions, but naturally satisfies the property of acting as a left-inverse of the canonical Riemann-Liouville fractional integral operator
\cite{Podlubny1999}.

The Hilfer fractional derivative, as introduced in \cite{Hilfer2000}, of order $\alpha \in (0,1)$ and type $\beta \in [0,1]$ interpolates between the Riemann-Liouville ($\beta = 0$) and Caputo fractional {derivatives} ($\beta = 1$).
When applied to FDEs, this operator provides a direct intuition on the choice of the fractional differential operator on initial or boundary conditions and solution behavior.
Since $\beta$ acts as an interpolation parameter, the Hilfer fractional derivative can also be applied to homotopy-like methods, taking for instance a Caputo fractional derivative solution as starting point for a solution to the corresponding Riemann-Liouville problem.
Moreover, when applied to FDEs, the operator allows for parametrization of the fractional order of initial or boundary conditions by the choice of $\beta$.
Existence and uniqueness for Cauchy-like initial value problems (IVPs) using Hilfer fractional derivatives is proven in \citet{Furati2012}, of which we implement a numerical splines method in \cite{Goedegebure2025splinespreprint}.

Existing literature on fractional-periodic BVPs using Caputo fractional derivatives includes the work of \cite{Karthikeyan2012,Feckan2017, FeckanMarynetsWang2023}, with numerical approximations outlined in e.g. \cite{Feckan2023predprey}.
For non-Caputo derivative {BVPs}, \citet{Marynets2024} consider existence and approximations to FBVPs for generalized Hilfer-Prabhakar operators.
In the paper of \citet{Zhai2013}, the Riemann-Liouville derivative operator is chosen with fixed boundary solution values of $0$.

Numerous existence and uniqueness results for boundary value systems with Hilfer fractional derivatives are presented in literature.
Results on systems with a homogeneous left-time domain boundary value are given in e.g. 
\cite{Dhawan2024, Wongcharoen2020, Nuchpong2020, Sitho2021, ahmad2024nonlinear}.
Homogeneous {fractional-periodic} problems are studied by \cite{Pathak2018, Basua2025} and in \cite{Yuldashev2020} for a system of fractional partial differential equation (FPDE) with Hilfer fractional derivatives.
A nonhomogeneous, coupled BVP for Hilfer fractional derivatives is studied by \citet{Wang2015}. 
However, we note that here, the right boundary point is proportional to the solution, whereas the left boundary point depends on the fractional integral as $t\downarrow a$.
This results in a less straightforward interpretation of the boundary conditions when generalizing {fractional-periodic} Caputo problems, since the two boundary conditions depend on different type of quantities in the non-Caputo case.

As to our knowledge, no literature exists on existence and uniqueness or numerical approximations of Hilfer derivative BVPs with fractional-periodic boundary conditions.
To fill this gap, we consider the following FBVP:
\begin{align}
    \label{eq:bvp}
    \begin{cases}
    \hilfder{0}{t}{\alpha, \beta}\bf{x}(t) 
    = \bf{f}(t,\bf{x}(t)), \quad &0<t<T, \quad 0 < \alpha < 1, \quad 0 \leq \beta \leq 1,
    \\
    \rlint{0}{0}{1-\gamma}\bf{x}(0)
    =
    \rlint{0}{T}{1-\gamma}\bf{x}(T)
    , \quad &\gamma = \alpha + \beta - \alpha \beta.
    \end{cases}
\end{align}
{%
Here, $\bf{x}: (0,T]\mapsto \mathbb{R}^d$ are the unknown solutions to the system characterized by the nonlinear vector-valued function $\bf{f}:(0,T] \times \mathbb{R}^d \mapsto \mathbb{R}^d$.
The fractional differential operator $\hilfder{0}{t}{\alpha, \beta}$ represents the Hilfer fractional derivative of order $\alpha$ and type $\beta$, and $\rlint{0}{t}{1-\gamma}$ the Riemann-Liouville fractional derivative where we use the convention $\rlint{0}{0}{\alpha} f(0) := \Lim{t\downarrow 0} \rlint{0}{t}{\alpha} f(t)$.
The choice of $\beta = 0$ yields a Riemann-Liouville fractional derivative system and $\beta = 1$ a system with Caputo fractional derivative, which reduces to a BVP with {the classical boundary condition} $\bf{x}(0) = \bf{x}(T)$ as studied in e.g. \cite{Feckan2017}.
{We remark that,} unless otherwise specified, the operations $\rlint{0}{t}{\alpha}$, $\hilfder{0}{t}{\alpha, \beta}$, $|\cdot|$, $\| \cdot \|$ and $\leq $ are applied component-wise in this paper for vector-valued functions.
}

The paper is organized as follows.
Preliminary results on fractional calculus and Bernstein splines approximation are presented in Section \ref{sec:prelim}.
In Section \ref{sec:ex_un}, we present the main analytical existence results for system \eqref{eq:bvp}, based on a perturbed IVP approach and extended from the thesis work \cite{Goedegebure2025thesis}.
We present numerical approximations and respective error analysis using a Bernstein spline approach in Section \ref{sec:num_approx}, building on the $\eps$-shifted time domain strategy as used in \cite{Goedegebure2025splinespreprint} for Hilfer fractional IVPs.
Section \ref{sec:num_res} is devoted to numerical results, showing convergence order results for a linear polynomial example in Section \ref{sec:num_res_pol} and simulation results for a nonlinear system in Section \ref{sec:num_res_nonlin}.
Finally, we present conclusions and suggestions for future work in Section \ref{sec:conclusions}.
\section{Preliminary results}
\label{sec:prelim}
\subsection{Fractional calculus}
\begin{definition}[Left-sided Riemann-Liouville fractional integral, \cite{Samko1987}]\label{def:rl_int}
For $\alpha \geq 0$, $a\leq t$ and $a \leq b$, define the Riemann-Liouville fractional integral as:
\begin{align}
    \rlint{a}{b}{\alpha} f(t)
    =
    {\frac {1}{\Gamma(\alpha)}}\int_{a}^{b}\left(t-s\right)^{\alpha-1}f(s)\,\mathrm {d} s.
\end{align}
For additional notational clarity, we will use $\rlint{a}{b}{\alpha} \{f(s)\}(t) := \rlint{a}{b}{\alpha} f(t)$ throughout this paper in cases where confusion about the variable of integration can occur.
\end{definition}
\begin{proposition}[Semigroup property of fractional integration, \cite{Samko1987}]
    \label{prop:rlint_semigroup}
    For $\alpha, \alpha' > 0$, if \allowbreak\mbox{$f \in L^p[a, b]$} with $1\leq p \leq \infty$, we have:
    \begin{align}
    \label{eq:semigroupeq}
        (
        \rlint{a}{b}{\alpha}
        \rlint{a}{b}{\alpha'}
        )
        f(t)
        =
        \rlint{a}{b}{\alpha + \alpha'}
        f(t),
    \end{align}
    almost everywhere for $t \in [a,b]$.
    Furthermore, if $\alpha+\alpha'>1$,  equality \eqref{eq:semigroupeq} holds point-wise.
\end{proposition}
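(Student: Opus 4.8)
The plan is to establish \eqref{eq:semigroupeq} by an explicit double-integral computation resting on the Euler Beta integral, and then to isolate the measure-theoretic points that force the ``almost everywhere'' qualifier and that allow its removal when $\alpha+\alpha'>1$.

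First I would substitute Definition \ref{def:rl_int} into both factors, obtaining, for $t\in[a,b]$,
\[
(\rlint{a}{b}{\alpha}\rlint{a}{b}{\alpha'})f(t)
=
\frac{1}{\Gamma(\alpha)\Gamma(\alpha')}
\int_{a}^{t}(t-s)^{\alpha-1}
\left(
\int_{a}^{s}(s-\tau)^{\alpha'-1}f(\tau)\dto\tau
\right)\dto s .
\]
Before manipulating this I would record that $\rlint{a}{s}{\alpha'}f$ is finite for almost every $s$ and belongs to $L^1[a,b]$: on the bounded interval $[a,b]$ we have $f\in L^1[a,b]$, the kernel $s^{\alpha'-1}$ is locally integrable, and the convolution of an $L^1$ kernel with an $L^1$ function is again in $L^1$ (Young's inequality), which already suffices here. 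This guarantees that the iterated integral above is absolutely convergent for almost every $t$, so that Fubini--Tonelli applies.

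Next I would interchange the order of integration in $(s,\tau)$, so that $\tau$ runs over $[a,t]$ and $s$ over $[\tau,t]$:
\[
(\rlint{a}{b}{\alpha}\rlint{a}{b}{\alpha'})f(t)
=
\frac{1}{\Gamma(\alpha)\Gamma(\alpha')}
\int_{a}^{t}f(\tau)
\left(
\int_{\tau}^{t}(t-s)^{\alpha-1}(s-\tau)^{\alpha'-1}\dto s
\right)\dto\tau .
\]
The inner $s$-integral I would evaluate by the affine change of variables $s=\tau+(t-\tau)\xi$, $\xi\in[0,1]$, which turns it into $(t-\tau)^{\alpha+\alpha'-1}\int_0^1(1-\xi)^{\alpha-1}\xi^{\alpha'-1}\dto\xi=(t-\tau)^{\alpha+\alpha'-1}\mathrm{B}(\alpha',\alpha)$. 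Using $\mathrm{B}(\alpha',\alpha)=\Gamma(\alpha')\Gamma(\alpha)/\Gamma(\alpha+\alpha')$, the Gamma prefactors cancel and the right-hand side collapses to $\frac{1}{\Gamma(\alpha+\alpha')}\int_a^t(t-\tau)^{\alpha+\alpha'-1}f(\tau)\dto\tau=\rlint{a}{b}{\alpha+\alpha'}f(t)$, which is \eqref{eq:semigroupeq} for almost every $t\in[a,b]$.

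For the final assertion, when $\alpha+\alpha'>1$ the kernel $(t-\tau)^{\alpha+\alpha'-1}$ is bounded on $[a,t]$, so $\rlint{a}{b}{\alpha+\alpha'}f$ is defined at \emph{every} $t\in[a,b]$ and, by dominated convergence using $f\in L^1[a,b]$, is continuous there; a parallel estimate shows the composed left-hand side is continuous as well, and two continuous functions coinciding almost everywhere coincide everywhere. The routine part of all this is the algebra --- the Beta-integral evaluation is a one-liner; the part that demands care is the measure-theoretic bookkeeping: justifying the Fubini interchange purely from the hypothesis $f\in L^p$ (rather than, say, continuity of $f$), and keeping track of the null sets on which the successive integrals may fail to be defined, so that the ``almost everywhere'' statement is unambiguous and its upgrade to ``everywhere'' under $\alpha+\alpha'>1$ is rigorous.
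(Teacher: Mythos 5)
The paper does not prove this proposition; it is quoted as a preliminary from the cited reference, so there is no in-paper argument to compare against. Your proof is the standard one from Samko--Kilbas--Marichev (the Dirichlet interchange formula plus the Euler Beta integral), and the core computation — Tonelli to justify the swap, the affine substitution giving $(t-\tau)^{\alpha+\alpha'-1}\mathrm{B}(\alpha',\alpha)$, and the cancellation of Gamma factors — is correct. The one soft spot is the pointwise upgrade: your claim that ``a parallel estimate shows the composed left-hand side is continuous'' is not innocent, since $\rlint{a}{t}{\alpha}g$ for $g\in L^1$ and $\alpha<1$ need not be continuous, and establishing continuity of the composition essentially presupposes the identity you are proving. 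A cleaner route is to fix $t$ and apply Tonelli directly to the iterated absolute integral: it equals $\mathrm{B}(\alpha',\alpha)\int_a^t(t-\tau)^{\alpha+\alpha'-1}|f(\tau)|\,\mathrm{d}\tau$, which is finite for \emph{every} $t$ when $\alpha+\alpha'>1$ because the kernel is bounded and $f\in L^1[a,b]$; hence the Fubini computation, and with it equality \eqref{eq:semigroupeq}, is valid at every point rather than merely almost everywhere.
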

\begin{definition}[Beta function, \cite{Samko1987}]
\label{def:intro_beta}
For $a, b> 0$, denote the (complete) beta function as:
\begin{align}
    \mathrm{B}(a,b) 
    =
    \int_0^1
    \vartheta^{a-1}(1-\vartheta)^{b-1}\dto \vartheta.
\end{align}
Furthermore, it holds that $\mathrm{B}(a,b) = \fr{\Gamma(a)\Gamma(b)}{\Gamma(a+b)}$, where $\Gamma (z)=\int_{0}^{\infty}t^{z-1}e^{-t} \dto t$ denotes the special gamma function.
\end{definition}
\begin{definition}[Incomplete beta function, \cite{Osborn1968}]
\label{def:intro_beta_incomplete}
For $a, b> 0$ and $0\leq z \leq 1$, denote the incomplete beta function as:
\begin{align}
    \mathrm{B}_z(a,b) 
    =
    \int_0^z
    \vartheta^{a-1}(1-\vartheta)^{b-1}\dto \vartheta.
\end{align}
\end{definition}
\begin{proposition}[Fractional integral of a monomial, \cite{Kilbas2006}]
\label{prop:int_pol_full}
    For $\alpha >0, k > -1$, the $\alpha$-fractional integral of a monomial of degree $k$ is given by
    \begin{align}
        \rlint{0}{t}{\alpha}t^k
        =\fr{\Gamma(k+1)}{\Gamma(\alpha+k+1)}t^{\alpha+k}
        =
        \fr{t^{\alpha+k}}{\Gamma(\alpha)}\mathrm{B}(k+1, \alpha).
    \end{align}
\end{proposition}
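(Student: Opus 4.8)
The plan is to evaluate the defining integral from Definition \ref{def:rl_int} directly, converting it into a beta integral by a linear rescaling of the integration variable. Starting from
$$
\rlint{0}{t}{\alpha} t^k = \fr{1}{\Gamma(\alpha)} \int_0^t (t-s)^{\alpha-1} s^k \dto s,
$$
the first thing I would check is that this integral is finite for each fixed $t>0$: the potential singularity at $s=0$ is controlled by the hypothesis $k>-1$ and the one at $s=t$ by $\alpha>0$, so both endpoints are integrable. These two integrability requirements are exactly what forces the stated ranges of $\alpha$ and $k$.

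Next I would apply the substitution $s = t\vartheta$, so that $\dto s = t\dto\vartheta$, $t-s = t(1-\vartheta)$, and $s\in(0,t)$ corresponds to $\vartheta\in(0,1)$. Pulling all powers of $t$ outside the integral and using $(\alpha-1)+k+1 = \alpha+k$ gives
$$
\rlint{0}{t}{\alpha} t^k = \fr{t^{\alpha+k}}{\Gamma(\alpha)} \int_0^1 \vartheta^k (1-\vartheta)^{\alpha-1} \dto\vartheta.
$$
The remaining integral is precisely $\mathrm{B}(k+1,\alpha)$ in the notation of Definition \ref{def:intro_beta}, and its hypotheses $k+1>0$ and $\alpha>0$ hold; this already yields the second claimed equality. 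Finally, inserting the closed form $\mathrm{B}(k+1,\alpha) = \Gamma(k+1)\Gamma(\alpha)/\Gamma(\alpha+k+1)$ from Definition \ref{def:intro_beta} and cancelling $\Gamma(\alpha)$ produces the first equality.

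I do not anticipate any genuine obstacle: the argument is a single change of variables followed by the closed form of the beta function. The only steps meriting a line of justification are the endpoint integrability check above and, if one wants full rigor when $\alpha\in(0,1)$, a brief remark that the change of variables is legitimate for this (absolutely convergent, possibly improper) integral — both of which are standard.
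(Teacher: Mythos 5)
Your proof is correct: the paper itself states this proposition without proof, citing \cite{Kilbas2006}, and your substitution $s = t\vartheta$ reducing the Riemann--Liouville integral to the beta integral $\mathrm{B}(k+1,\alpha)$ is exactly the standard derivation underlying that reference. The integrability remarks at the endpoints ($k>-1$ at $s=0$, $\alpha>0$ at $s=t$) are a sensible addition and nothing further is needed.
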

\begin{proposition}[Fractional integral of monomial with left-local support, \cite{Goedegebure2025splinespreprint}]
    \label{prop:int_pol_inc_0_t}
    For $\alpha > 0$, $k > -1$ and $0<b\leq t$, the $\alpha$-fractional integral on $(0, b]$ of a monomial of degree $k$ is given by
    \begin{align}
    \rlint{0}{b}{\alpha}t^k
    =
    \fr{t^{\alpha+k}}{\Gamma(\alpha)}
    \mathrm{B}_{b/t}(k+1, \alpha).
    \end{align}
\end{proposition}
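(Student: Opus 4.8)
The plan is to evaluate the integral directly from Definition \ref{def:rl_int} and recognize the result as an incomplete beta function via a linear rescaling of the integration variable. Writing the definition with lower limit $0$, upper limit $b$, and integrand $f(s) = s^k$ gives
\begin{align}
    \rlint{0}{b}{\alpha}t^k
    =
    \frac{1}{\Gamma(\alpha)}\int_0^b (t-s)^{\alpha-1} s^k \,\mathrm{d}s.
\end{align}
Before manipulating it I would check the integral is finite: near $s=0$ the factor $s^k$ is integrable because $k>-1$, and near the upper endpoint the only delicate case is $b=t$, where $(t-s)^{\alpha-1}$ is integrable because $\alpha>0$ (for $b<t$ the integrand is bounded there). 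Likewise $0 \le b/t \le 1$, so $\mathrm{B}_{b/t}(k+1,\alpha)$ from Definition \ref{def:intro_beta_incomplete} is well defined.

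Next I would substitute $s = t\vartheta$ with $t>0$ fixed, so that $\mathrm{d}s = t\,\mathrm{d}\vartheta$ and the limits $s=0,b$ become $\vartheta = 0, b/t$. Factoring out the powers of $t$,
\begin{align}
    \int_0^b (t-s)^{\alpha-1} s^k \,\mathrm{d}s
    =
    t^{\alpha+k}\int_0^{b/t}(1-\vartheta)^{\alpha-1}\vartheta^{k}\,\mathrm{d}\vartheta
    =
    t^{\alpha+k}\,\mathrm{B}_{b/t}(k+1,\alpha),
\end{align}
where the last equality is exactly Definition \ref{def:intro_beta_incomplete}. Dividing by $\Gamma(\alpha)$ yields the claimed formula.

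There is no substantial obstacle here; the only things to be careful about are the endpoint integrability (already guaranteed by the hypotheses $k>-1$ and $\alpha>0$) and consistency with the full-interval case: taking $b=t$ gives $\mathrm{B}_1(k+1,\alpha)=\mathrm{B}(k+1,\alpha)$ and recovers Proposition \ref{prop:int_pol_full}, a useful sanity check. One could alternatively derive the identity by splitting $\int_0^b = \int_0^t - \int_b^t$ and invoking Proposition \ref{prop:int_pol_full}, but the direct change of variables is the shortest route.
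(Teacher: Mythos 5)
Your proof is correct: the paper states this proposition as a cited preliminary result without reproducing a proof, and your direct substitution $s = t\vartheta$ in Definition \ref{def:rl_int}, reducing the integral to $\mathrm{B}_{b/t}(k+1,\alpha)$ by Definition \ref{def:intro_beta_incomplete}, is exactly the standard derivation (and your integrability and $b=t$ consistency checks are apt). Nothing further is needed.
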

\begin{proposition}[Fractional integral of monomial with right-local support, \cite{Goedegebure2025splinespreprint}]
    \label{prop:int_pol_inc_t_T}
    For $\alpha >0$, $k > -1$ and $0\leq b\leq t$, the $\alpha$-fractional integral on $[b, t]$ of a monomial of degree $k$ is given by
    \begin{align}
    \rlint{b}{t}{\alpha}
    t^k
    =
    \fr{t^{\alpha+k}}{\Gamma(\alpha)}
    \mathrm{B}_{1-b/t}(\alpha, k+1).
    \end{align}
\end{proposition}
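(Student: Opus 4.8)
The plan is to evaluate the defining integral directly by a pair of elementary substitutions. By Definition~\ref{def:rl_int},
\[
\rlint{b}{t}{\alpha} t^k = \frac{1}{\Gamma(\alpha)}\int_{b}^{t}(t-s)^{\alpha-1}s^{k}\,\dto s ,
\]
and this integral is absolutely convergent: the factor $(t-s)^{\alpha-1}$ has at worst an integrable singularity at $s=t$ since $\alpha>0$, and in the degenerate case $b=0$ the hypothesis $k>-1$ handles the singularity of $s^k$ at $s=0$ (for $b>0$ the integrand is bounded). So all manipulations below are legitimate.

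First I would substitute $s=t\vartheta$, which is admissible because $t>0$; it sends $s=b$ to $\vartheta=b/t$ and $s=t$ to $\vartheta=1$, gives $t-s=t(1-\vartheta)$ and $\dto s = t\,\dto\vartheta$, and after collecting the powers of $t$ yields
\[
\rlint{b}{t}{\alpha} t^k = \frac{t^{\alpha+k}}{\Gamma(\alpha)}\int_{b/t}^{1}\vartheta^{k}(1-\vartheta)^{\alpha-1}\,\dto\vartheta .
\]
Then I would apply the reflection $\vartheta\mapsto 1-\vartheta$ to move the lower limit $b/t$ to an upper limit $1-b/t$, turning the integral into $\int_{0}^{1-b/t}\vartheta^{\alpha-1}(1-\vartheta)^{k}\,\dto\vartheta$, which is precisely $\mathrm{B}_{1-b/t}(\alpha,k+1)$ by Definition~\ref{def:intro_beta_incomplete} (with parameters $a=\alpha$, $b=k+1$). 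This produces the claimed identity.

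I do not expect any genuine obstacle here; the only points warranting a remark are the integrability observations above and the behavior at the boundary of the parameter range. I would close by noting the consistency checks: $b=t$ gives $\mathrm{B}_0(\alpha,k+1)=0$, as it must since the interval of integration collapses, and $b=0$ recovers Proposition~\ref{prop:int_pol_full} because $\mathrm{B}_1(\alpha,k+1)=\mathrm{B}(\alpha,k+1)=\Gamma(\alpha)\Gamma(k+1)/\Gamma(\alpha+k+1)$ by Definition~\ref{def:intro_beta}. The argument is the mirror image of the one behind Proposition~\ref{prop:int_pol_inc_0_t}, with the two factors in the integrand interchanged, which explains why the arguments of the incomplete beta function appear in the opposite order.
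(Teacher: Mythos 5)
Your computation is correct: the substitution $s=t\vartheta$ followed by the reflection $\vartheta\mapsto 1-\vartheta$ reduces $\rlint{b}{t}{\alpha}t^k$ exactly to $\fr{t^{\alpha+k}}{\Gamma(\alpha)}\mathrm{B}_{1-b/t}(\alpha,k+1)$, and your integrability remarks ($\alpha>0$ at $s=t$, $k>-1$ at $s=0$) and boundary checks are the right ones. The paper itself does not prove this proposition but imports it from the cited splines preprint, and your argument is precisely the standard direct evaluation (the mirror of Proposition~\ref{prop:int_pol_inc_0_t}), so there is nothing to add.
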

We define fractional differentiation in the Hilfer fractional derivative sense:
\begin{definition}[Hilfer fractional derivative, \cite{Hilfer2000}]
    \label{def:intro_hilfder}
    The Hilfer fractional derivative of order \allowbreak\mbox{$0< \alpha <1$} and type $0 \leq \beta \leq 1$ for $t \geq 0 $ is given as:
    \begin{align}
        \hilfder{0}{t}{\alpha, \beta}
        =
        \rlint{0}{t}{\beta(1-\alpha)}
        \der{ }{t}
        \rlint{0}{t}{(1-\beta)(1-\alpha)}.
    \end{align}
\end{definition}
\begin{remark}[Interpolation of Caputo and Riemann-Liouville derivative]
\label{rem:cap_rl_interpolation}
    The Hilfer fractional derivative interpolates two of the most commonly used fractional derivative operators:
    \begin{align}
        \hilfder{0}{t}{\alpha, \beta}
        =
        \begin{cases}
            \der{}{t}
            \rlint{0}{t}{1-\alpha}  
            ,
            \quad
            \text{for}
            \quad
            \beta = 0,
            \quad
            \text{(Riemann-Liouville derivative)},
            \\
            \rlint{0}{t}{1-\alpha}
            \der{}{t}
            ,
            \quad
            \text{for}
            \quad
            \beta = 1,
            \quad
            \text{(Caputo derivative)}.
        \end{cases}
    \end{align}
\end{remark}
To obtain solutions to Hilfer FDEs, \citet{Furati2012} introduce the following \textit{weighted space of continuous functions}:
\begin{align}
    C_{1-\gamma}[\astart, \aend]
    =
    \{  f: (0, T] \to \mathbb{R} : t^{1-\gamma} f \in C[0,T] \},
\end{align}
together with the corresponding norm
\begin{align}
    \weighnorm{f}
    =
    \sup_{t\in(0,T]}
    |
    t^{1-\gamma}
    f(t)
    |.
\end{align}
We note that $C_{1-\gamma}[0,T]$ is complete with the choice of this norm.
Furthermore, \citet{Furati2012} consider the following fractionally differentiable subspace:
    \begin{align}
        C^{\vartheta}_{1-\gamma}[\astart, \aend]
        =
        \{  f \in C_{1-\gamma}[\astart, \aend]
        , \, \hilfder{\aplus}{t}{\vartheta, 0} f(t) \in C_{1-\gamma}[\astart, \aend] \}.
    \end{align}
Now, consider the following IVP
\begin{align}
    \begin{cases}
    \label{eq:ivp}
    \hilfder{0}{t}{\alpha, \beta}\bf{x}(t) 
    = \bf{f}(t,\bf{x}(t)), \quad &0<t\leq T, \quad 0 < \alpha < 1, \quad 0 \leq \beta \leq 1,
    \\
    \rlint{0}{0}{1-\gamma}\bf{x}(0)
    =
    \bf{\tilde{x}}_0
    , \quad &\gamma = \alpha + \beta - \alpha \beta.
    \end{cases}
\end{align}
The following existence and uniqueness result holds:
\begin{theorem}[IVP existence and uniqueness \cite{Furati2012}]
\label{thm:ex_un_ivp}
    If $\bf{f}:(\astart, \aend] \times \mathbb{R}^d \to \mathbb{R}^d$ satisfies:
    \begin{enumerate}
        \item  $\bf{f}(\cdot, \bf{x}(\cdot)) \in C^{{\beta}(1-{\alpha})}_{1-{\gamma}}[\astart, \aend]$ for any $\bf{x}\in C_{1-{\gamma}}[\astart, \aend]$,
        \item $\bf{f}$ is Lipschitz in its second argument. 
        For some nonnegative matrix $K \in \mathbb{R}_{\geq 0}^{d\times d}$:
        \begin{align}
            |\bf{f}(t, \bf{u}(t)) - \bf{f}(t, \bf{v}(t))|
            \leq 
            K |\bf{u}(t) - \bf{v}(t)|,
            \label{eq:lipschitz_cond}
        \end{align}
        for all $t\in (\astart, \aend]$, $\bf{u}(t), \bf{v}(t) \in G\subset \mathbb{R}^d$. 
    \end{enumerate}
    Then, there exists a unique solution $\bf{x} \in C_{1-\gamma}^\gamma[\astart, \aend]$ to IVP \eqref{eq:ivp}.
    Furthermore, $\bf{x}$ is a solution to \eqref{eq:ivp} if and only if it satisfies the equivalent integral equation:
    \begin{align}
        \label{eq:eq_int_eq}
        \bf{x}(t)
        =
        \fr{\bf{\tilde{x}}_0}
        {\Gamma(\gamma)}t^{\gamma-1}
        +
        \rlint{0}{t}{\alpha}\bf{f}(t, \bf{x}(t)), \quad t > 0.
    \end{align}
\end{theorem}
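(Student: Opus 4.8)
The plan is to follow the standard route for Hilfer initial value problems: convert IVP \eqref{eq:ivp} into the fixed point problem \eqref{eq:eq_int_eq} and then run a weighted Banach contraction argument in $C_{1-\gamma}[0,T]$.

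\emph{Equivalence.} First I would record the arithmetic identities $1-\gamma=(1-\alpha)(1-\beta)$ and $\gamma-\alpha=\beta(1-\alpha)$, which let me factor $\hilfder{0}{t}{\alpha,\beta}=\rlint{0}{t}{\gamma-\alpha}\,\der{}{t}\,\rlint{0}{t}{1-\gamma}=\rlint{0}{t}{\gamma-\alpha}\hilfder{0}{t}{\gamma,0}$, with $\hilfder{0}{t}{\gamma,0}$ the Riemann--Liouville derivative of order $\gamma$. Applying $\rlint{0}{t}{\alpha}$ to the FDE and invoking the semigroup property (Proposition~\ref{prop:rlint_semigroup}) turns the left side into $\rlint{0}{t}{\gamma}\hilfder{0}{t}{\gamma,0}\bf{x}(t)$, which by the standard Riemann--Liouville inversion formula equals $\bf{x}(t)-\frac{(\rlint{0}{t}{1-\gamma}\bf{x})(0^+)}{\Gamma(\gamma)}t^{\gamma-1}$; substituting the initial condition yields \eqref{eq:eq_int_eq}. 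Conversely, starting from \eqref{eq:eq_int_eq} I would apply $\rlint{0}{t}{1-\gamma}$ and differentiate to recover the initial value (using $\rlint{0}{t}{1-\gamma}t^{\gamma-1}=\Gamma(\gamma)$ and the semigroup property), and apply $\hilfder{0}{t}{\alpha,\beta}$ using $\hilfder{0}{t}{\alpha,\beta}t^{\gamma-1}=0$ together with the fact that $\rlint{0}{t}{\alpha}$ is a right inverse of $\hilfder{0}{t}{\alpha,\beta}$ on $C_{1-\gamma}^{\beta(1-\alpha)}[0,T]$ to recover the FDE; Assumption~1 guarantees $\bf{f}(\cdot,\bf{x}(\cdot))$ lies in that subspace.

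\emph{Contraction.} Set $\conmapeval{\bf{x}}=\frac{\bf{\tilde{x}}_0}{\Gamma(\gamma)}t^{\gamma-1}+\rlint{0}{t}{\alpha}\bf{f}(t,\bf{x}(t))$. Assumption~1 plus the mapping property of $\rlint{0}{t}{\alpha}$ on weighted spaces gives $\mathcal{T}:C_{1-\gamma}[0,T]\to C_{1-\gamma}[0,T]$. For the Lipschitz estimate I would bound, componentwise,
\[
t^{1-\gamma}\bigl|\rlint{0}{t}{\alpha}\{\bf{f}(s,\bf{u}(s))-\bf{f}(s,\bf{v}(s))\}(t)\bigr|\le\frac{K}{\Gamma(\alpha)}\,t^{1-\gamma}\!\int_0^t(t-s)^{\alpha-1}s^{\gamma-1}\dto s\;\weighnorm{\bf{u}-\bf{v}},
\]
and evaluate the $s$-integral via Proposition~\ref{prop:int_pol_full} to obtain $\weighnorm{\conmap{\bf{u}}-\conmap{\bf{v}}}\le\frac{\Gamma(\gamma)}{\Gamma(\gamma+\alpha)}T^{\alpha}\,K\,\weighnorm{\bf{u}-\bf{v}}$. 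Iterating this bound yields $\weighnorm{\mathcal{T}^n\bf{u}-\mathcal{T}^n\bf{v}}\le\frac{\Gamma(\gamma)}{\Gamma(n\alpha+\gamma)}T^{n\alpha}K^n\,\weighnorm{\bf{u}-\bf{v}}$, and since the scalar prefactor (with $\|K\|$ in place of $K$) is the $n$-th term of a convergent Mittag-Leffler-type series it tends to $0$; hence some power $\mathcal{T}^n$ is a contraction, and Weissinger's fixed point theorem produces a unique fixed point $\bf{x}\in C_{1-\gamma}[0,T]$, which by the equivalence is the unique solution of \eqref{eq:ivp}.

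\emph{Regularity and main obstacle.} To upgrade $\bf{x}$ to $C_{1-\gamma}^{\gamma}[0,T]$ I would apply $\hilfder{0}{t}{\gamma,0}=\der{}{t}\rlint{0}{t}{1-\gamma}$ to \eqref{eq:eq_int_eq} and use $1-\gamma+\alpha=1-(\gamma-\alpha)$ to get $\hilfder{0}{t}{\gamma,0}\bf{x}=\hilfder{0}{t}{\gamma-\alpha,0}\bf{f}(\cdot,\bf{x}(\cdot))$, which belongs to $C_{1-\gamma}[0,T]$ exactly because $\bf{f}(\cdot,\bf{x}(\cdot))\in C_{1-\gamma}^{\beta(1-\alpha)}[0,T]=C_{1-\gamma}^{\gamma-\alpha}[0,T]$. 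The hard part will be the careful bookkeeping in the equivalence step: justifying the Riemann--Liouville inversion formula and the interchange of $\der{}{t}$ with the fractional integrals over the weighted space $C_{1-\gamma}[0,T]$, where solutions are genuinely singular as $t\downarrow 0$, so that the $(0^+)$-limits of the relevant integrals must be shown to vanish. Once that is settled the contraction estimates are routine.
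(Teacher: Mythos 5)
Your proposal is essentially correct, but it is worth knowing that the paper does not actually prove this theorem: its entire ``proof'' is a one-line deferral to Theorems 23 and 25 of the cited reference \cite{Furati2012}, together with the remark that everything is applied component-wise for vector-valued functions. What you have written is, in effect, a faithful reconstruction of the argument in that reference: the same reduction via $1-\gamma=(1-\alpha)(1-\beta)$ and $\gamma-\alpha=\beta(1-\alpha)$, the same Riemann--Liouville inversion to establish equivalence with \eqref{eq:eq_int_eq}, the same weighted estimate $t^{1-\gamma}\rlint{0}{t}{\alpha}\{s^{\gamma-1}\}(t)=\fr{\Gamma(\gamma)}{\Gamma(\gamma+\alpha)}t^{\alpha}$ feeding a Weissinger/Mittag-Leffler iteration, and the same regularity upgrade via $\hilfder{0}{t}{\gamma,0}\bf{x}=\hilfder{0}{t}{\gamma-\alpha,0}\bf{f}(\cdot,\bf{x}(\cdot))$. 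Two small points to tighten if you were to write this out in full: (i) since $K$ is a nonnegative \emph{matrix}, the iterated bound $\fr{\Gamma(\gamma)}{\Gamma(n\alpha+\gamma)}T^{n\alpha}K^{n}$ should be handled either entrywise with a monotone norm or, as you indicate, by passing to $\|K\|$ before invoking the convergence of the Mittag-Leffler series --- this is exactly the ``component-wise'' step the paper waves at; (ii) the equivalence direction from \eqref{eq:eq_int_eq} back to \eqref{eq:ivp} needs Assumption~1 precisely where you place it, to guarantee that $\hilfder{0}{t}{\alpha,\beta}$ may be applied term by term, and the vanishing of $(\rlint{0}{t}{1-\gamma}\rlint{0}{t}{\alpha}\bf{f})(0^{+})$ follows from the boundedness of $t^{1-\gamma}\bf{f}$ via the monomial integral formula, so the ``hard part'' you flag is genuinely routine once that lemma is in hand. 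In short: your route is the correct one and coincides with the source the paper cites; the paper itself supplies no argument to compare against.
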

\begin{proof}
    The proof follows from Theorem 23 and 25 of \citet{Furati2012}, applied component-wise for vector-valued functions in $\mathbb{R}^d$.
\end{proof}
\subsection{Polynomial splines approximation}
\begin{definition}[Bernstein polynomial operator \cite{Lorentz1953}]
\label{def:bernstein_operator}
    For a function $f:[a,b]\to \mathbb{R}$, the Bernstein operator of polynomial order $q\in \mathbb{N}$ is given as:
    \begin{align}
        B^q
        f(t)
        =
        \sum_{j=0}^q
        f
        \left(
        a
        +
        {j(b-a)}/{q}
        \right)
        \binom{q}{j}
        \left(
        \fr{t-a}{b-a}
        \right)^j
        \left(
        \fr{b-t}{b-a}
        \right)^{q-j}
        .
    \end{align}
\end{definition}
\begin{definition}[Modulus of continuity \cite{Lorentz1953}]
\label{def:mod_cont}
    For a continuous function $f:[a,b]\to \mathbb{R}$, the modulus of continuity $\omega$ is defined as:
    \begin{align}
        \omega\,(f;\delta)
        =
        \sup_{|t-s|\leq \delta} |f(t)-f(s)|
        ,
        \quad 
        \textrm{ for } 
        t, s \in [a,b].
    \end{align}
\end{definition}

\begin{theorem}[Bernstein operator error bound, \cite{Lorentz1953}]
\label{thm:bernstein_pol_error}
For a continuous function $f:[a,b]\to\mathbb{R}$, the following error estimate holds:
\begin{align}
    |f(t) - B^qf(t)|
    \leq
    \fr{5}{4}\,
    \omega \left(f; 
    \fr{(b-a)}{\sqrt{q}}
    \right), 
    \quad \textrm{for all }
    t\in[a,b].
\end{align}
\end{theorem}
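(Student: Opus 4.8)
The plan is to follow Lorentz's classical probabilistic argument. First I would reduce to the unit interval by the affine substitution $t = a + (b-a)\tau$: writing $g(\tau) = f(a+(b-a)\tau)$, one checks directly that $B^q g(\tau) = B^q f(t)$, that $g(\tau) = f(t)$, and that $\omega(g;\eta) = \omega(f;(b-a)\eta)$; in particular $\omega(g;1/\sqrt q) = \omega(f;(b-a)/\sqrt q)$. Hence it suffices to prove $|g(\tau) - B^q g(\tau)| \le \tfrac54\,\omega(g;1/\sqrt q)$ for $\tau \in [0,1]$, i.e.\ I may assume $[a,b] = [0,1]$ and the Bernstein weights take the standard form $p_{q,j}(\tau) = \binom qj \tau^j(1-\tau)^{q-j}$.

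Next I would record the three moment identities for the weights, obtained by differentiating the binomial identity $\sum_{j=0}^q \binom qj x^j y^{q-j} = (x+y)^q$ once and twice in $x$ and then setting $x = \tau$, $y = 1-\tau$: namely $\sum_j p_{q,j}(\tau) = 1$, $\sum_j (j/q)\,p_{q,j}(\tau) = \tau$, and $\sum_j (j/q - \tau)^2\,p_{q,j}(\tau) = \tau(1-\tau)/q \le 1/(4q)$ (probabilistically, the total mass, mean, and variance of a Binomial$(q,\tau)$ law). Using $\sum_j p_{q,j}(\tau) = 1$ I can write $f(\tau) - B^q f(\tau) = \sum_{j=0}^q\bigl(f(\tau) - f(j/q)\bigr)p_{q,j}(\tau)$, so by the triangle inequality and Definition \ref{def:mod_cont}, $|f(\tau) - B^q f(\tau)| \le \sum_j \omega\bigl(f; |\tau - j/q|\bigr)\,p_{q,j}(\tau)$.

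The key step — and the source of the sharp constant $\tfrac54$ rather than the crude $\tfrac32$ — is the estimate $\omega(f;h) \le \bigl(1 + h^2/\delta^2\bigr)\,\omega(f;\delta)$, valid for every $h \ge 0$ and $\delta > 0$. For $h \le \delta$ it is immediate from monotonicity of $\omega$; for $h > \delta$ it follows from the subadditivity bound $\omega(f;m\delta) \le m\,\omega(f;\delta)$ applied with $m = \lceil h/\delta\rceil$, together with $\lceil h/\delta\rceil \le 1 + h/\delta \le 1 + h^2/\delta^2$, the last inequality holding because $h/\delta \ge 1$. Applying this with $h = |\tau - j/q|$ and summing against the weights gives $|f(\tau) - B^q f(\tau)| \le \omega(f;\delta)\bigl(1 + \delta^{-2}\sum_j(\tau - j/q)^2 p_{q,j}(\tau)\bigr) \le \omega(f;\delta)\bigl(1 + \tfrac{1}{4q\delta^2}\bigr)$. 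Choosing $\delta = 1/\sqrt q$ collapses the bracket to $1 + \tfrac14 = \tfrac54$, which is the claim.

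I expect the only real obstacles to be the bookkeeping in the moment identities (differentiating the binomial theorem twice and simplifying, or citing the binomial variance) and the small amount of care needed in the ceiling-function inequality that produces $\tfrac54$; everything else is the triangle inequality and monotonicity of $\omega$. Notably, no Cauchy--Schwarz estimate or compactness input is required, since the quadratic majorant for $\omega$ lets the second moment be used directly.
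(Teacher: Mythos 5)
Your argument is correct: the rescaling to $[0,1]$, the binomial moment identities, the quadratic majorant $\omega(f;h)\le\bigl(1+h^2/\delta^2\bigr)\,\omega(f;\delta)$, and the choice $\delta=1/\sqrt q$ do yield the constant $\tfrac54$, and this is precisely the classical argument of the cited reference. The paper itself offers no proof of this statement — it is quoted directly from Lorentz (1953) as a preliminary — so your proposal simply reconstructs the standard proof from that source, and it does so without gaps.
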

\begin{definition}[Knot collection]
    \label{def:knot_collection}
    Given a closed interval $[a,b] \subset \mathbb{R}$, we define a knot collection $\mathcal{A}$ of size $k+1$ as the collection of disjoint intervals:
    \begin{align}
        \mathcal{A} = \{A_0, ..., A_k\} 
    = \{[t_0, t_1), ..., [t_{k-1}, t_{k}) ,[t_k, t_{k+1}]\},
    \end{align}
    such that 
    $
        \bigcup_{A_i\in\mathcal{A}}A_i = [a,b].
    $
\end{definition}
\begin{definition}[Bernstein spline operator, \cite{Goedegebure2025splinespreprint}]
\label{def:spline_operator}
    For a function $f:[a,b] \to \mathbb{R}$, and a knot collection $\mathcal{A}$ of size $k + 1$, we define the order $q$ Bernstein spline operator as:
    \begin{align}
        \spline{q} f (t)
        =
        \sum_{i=0}^k
        \One_{A_i}
        \, 
        B^q \, \{\One_{\bar{A}_i}f\}(t),
    \end{align}
    where $\bar{A}_i$ denotes the closure of $A_i$.
\end{definition}
Moreover, we note that $\spline{q}$ as defined above is a continuous operator \cite{Goedegebure2025splinespreprint}.
\section{Existence of analytical solutions}
This section presents the main existence results and convergence guarantees of analytical solutions to BVP \eqref{eq:bvp} by means of the corresponding perturbed IVP.
\label{sec:ex_un}
\subsection{The perturbed initial value problem}
In order to find a solution to BVP \eqref{eq:bvp}, we start by a perturbation $\nu \in \mathbb{R}^d$ of the corresponding IVP:
\begin{align}
    \label{eq:IVP_pert}
    \begin{cases}
    \hilfder{0}{t}{\alpha, \beta}\bf{x}(t) = \bf{f}(t, \bf{x}(t)) + 
    \nu
    , 
    \quad 
    0<t<T
    ,
    \\
    \rlint{0}{0}{1-\gamma} \bf{x}(0) = \tilde{\bf{x}}_0.
    \end{cases}
\end{align}
We first prove the conditions for which the perturbed IVP \eqref{eq:IVP_pert} satisfies the boundary conditions of BVP \eqref{eq:bvp}.
\begin{lemma}[Perturbed IVP with boundary conditions]
    \label{lemma:pert_ivp_bcs}
    Consider the perturbed IVP \eqref{eq:IVP_pert}. 
    Assume the existence and uniqueness requirements of Theorem \ref{thm:ex_un_ivp} hold for a given $\bf{f}$ and initial condition $\xnull$.
    Then, solutions $\bf{x} \in C_{1-\gamma}[0,T]$ of \eqref{eq:IVP_pert} satisfy the boundary conditions of BVP \eqref{eq:bvp} if and only if
    \begin{align}
        \nu 
        = 
        -  
        {\Gamma(\zeta+1)}{T^{-\zeta}}
        \rlint{0}{T}{\zeta}\bf{f}(T,\bf{x}(T))
        =:
        \Delta_T(\xnull),
        \label{eq:ivp_pert_nu}
    \end{align}
where $\zeta := 1-\gamma + \alpha$.
Furthermore, the above holds if and only if $\bf{x}$ satisfies
\begin{align}
    \bf{x}(t)
    &= 
    \frac{\tilde{\bf{x}}_0}{\Gamma(\gamma)} t^{\gamma-1} 
    +
    \rlint{0}{t}{\alpha}\bf{f}(t,\bf{x}(t))
    -
    \fr{\Gamma(\zeta+1)t^\alpha}{\Gamma(\alpha+1)T^\zeta}
    \rlint{0}{T}{\zeta}\bf{f}(T, \bf{x}(T))
    .
    \label{eq:ivp_pert_int_eq}
\end{align}
\end{lemma}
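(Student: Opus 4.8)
The plan is to pass from the perturbed problem \eqref{eq:IVP_pert} to its equivalent Volterra integral equation and then rewrite the boundary condition of \eqref{eq:bvp} as a single vector equation for $\nu$. By Theorem \ref{thm:ex_un_ivp} applied to \eqref{eq:IVP_pert} --- whose forcing $\bf{f}(t,\bf{x}(t))+\nu$ inherits the Lipschitz bound from $\bf{f}$ and, as $\nu$ is constant, retains the required weighted continuity --- the solution $\bf{x}\in C_{1-\gamma}[0,T]$ is characterized by
\[
\bf{x}(t)=\fr{\xnull}{\Gamma(\gamma)}t^{\gamma-1}+\rlint{0}{t}{\alpha}\bf{f}(t,\bf{x}(t))+\rlint{0}{t}{\alpha}\nu,
\]
and by Proposition \ref{prop:int_pol_full} (with $k=0$) the last term equals $\nu\,t^{\alpha}/\Gamma(\alpha+1)$. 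Since $\bf{x}$ already satisfies $\rlint{0}{0}{1-\gamma}\bf{x}(0)=\xnull$, the boundary condition of \eqref{eq:bvp} is equivalent to the single identity $\rlint{0}{T}{1-\gamma}\bf{x}(T)=\xnull$.

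First I would apply $\rlint{0}{T}{1-\gamma}$ termwise to the displayed equation at $t=T$. By Proposition \ref{prop:int_pol_full} the leading term maps to $\xnull$ (since $\rlint{0}{t}{1-\gamma}t^{\gamma-1}\equiv\Gamma(\gamma)$), and the monomial term maps to $\nu\,T^{\zeta}/\Gamma(\zeta+1)$, where $\zeta:=1-\gamma+\alpha$; by the semigroup property (Proposition \ref{prop:rlint_semigroup}) the composition $\rlint{0}{T}{1-\gamma}\rlint{0}{T}{\alpha}\bf{f}(T,\bf{x}(T))$ collapses to $\rlint{0}{T}{\zeta}\bf{f}(T,\bf{x}(T))$. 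Hence $\rlint{0}{T}{1-\gamma}\bf{x}(T)=\xnull+\rlint{0}{T}{\zeta}\bf{f}(T,\bf{x}(T))+\nu\,T^{\zeta}/\Gamma(\zeta+1)$, so the boundary condition holds if and only if $\rlint{0}{T}{\zeta}\bf{f}(T,\bf{x}(T))+\nu\,T^{\zeta}/\Gamma(\zeta+1)=0$, i.e. if and only if $\nu=-\Gamma(\zeta+1)T^{-\zeta}\rlint{0}{T}{\zeta}\bf{f}(T,\bf{x}(T))=\Delta_T(\xnull)$, which is \eqref{eq:ivp_pert_nu}. For the remaining equivalence I would substitute this $\nu$ into the displayed equation: using $\rlint{0}{t}{\alpha}\Delta_T(\xnull)=-\fr{\Gamma(\zeta+1)t^{\alpha}}{\Gamma(\alpha+1)T^{\zeta}}\rlint{0}{T}{\zeta}\bf{f}(T,\bf{x}(T))$ this becomes exactly \eqref{eq:ivp_pert_int_eq}. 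Conversely, any $\bf{x}\in C_{1-\gamma}[0,T]$ satisfying \eqref{eq:ivp_pert_int_eq} can be rewritten as $\bf{x}(t)=\xnull t^{\gamma-1}/\Gamma(\gamma)+\rlint{0}{t}{\alpha}\bigl(\bf{f}(t,\bf{x}(t))+\Delta_T(\xnull)\bigr)$ with $\Delta_T(\xnull)$ a constant vector, hence by Theorem \ref{thm:ex_un_ivp} solves \eqref{eq:IVP_pert} with $\nu=\Delta_T(\xnull)$ --- alternatively one checks this directly by applying $\hilfder{0}{t}{\alpha,\beta}$, using $\hilfder{0}{t}{\alpha,\beta}t^{\gamma-1}=0$, $\hilfder{0}{t}{\alpha,\beta}t^{\alpha}=\Gamma(\alpha+1)$ and the left-inverse property of $\rlint{0}{t}{\alpha}$ --- and therefore, by the first equivalence, satisfies the boundary condition.

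The three monomial fractional integrals are routine via Proposition \ref{prop:int_pol_full}. The one step requiring genuine care is the composition $\rlint{0}{T}{1-\gamma}\rlint{0}{T}{\alpha}\bf{f}(T,\bf{x}(T))$ at the endpoint: the orders sum to $\zeta=1-\gamma+\alpha\le 1$, with equality only when $\beta=0$, so Proposition \ref{prop:rlint_semigroup} yields the semigroup identity only almost everywhere on $[0,T]$, not pointwise at $t=T$. Since $\bf{f}(\cdot,\bf{x}(\cdot))\in C_{1-\gamma}[0,T]\subset L^{1}[0,T]$, I would close this by a direct Fubini interchange --- after the substitution $s=r+u(T-r)$ the inner integral produces the Beta factor $\mathrm{B}(\alpha,1-\gamma)=\Gamma(\alpha)\Gamma(1-\gamma)/\Gamma(\zeta)$ and $\rlint{0}{T}{\zeta}\bf{f}(T,\bf{x}(T))$ is recovered exactly --- or by noting that $t\mapsto\rlint{0}{t}{\alpha}\bf{f}(t,\bf{x}(t))$ is continuous near $t=T>0$, so the almost-everywhere identity extends to $t=T$ by continuity; the same remark also secures $\Lim{t\downarrow 0}\rlint{0}{t}{1-\gamma}\bf{x}(t)=\xnull$, used implicitly in the converse direction. (When $\beta=0$, $\zeta=1$ and Proposition \ref{prop:rlint_semigroup} gives the pointwise statement directly.)
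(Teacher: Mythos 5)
Your proposal is correct and follows essentially the same route as the paper: pass to the equivalent Volterra integral equation, apply $\rlint{0}{t}{1-\gamma}$ using the semigroup property and the monomial integrals, evaluate at $t=0$ and $t=T$, solve for $\nu$, and substitute back to obtain \eqref{eq:ivp_pert_int_eq}. Your extra care about the semigroup identity holding only almost everywhere when $\zeta=1-\gamma+\alpha<1$ (i.e.\ for $\beta>0$), resolved by continuity of $t\mapsto\rlint{0}{t}{\zeta}\bf{f}(t,\bf{x}(t))$ at $t=T$, is a legitimate refinement of a step the paper passes over silently, but it does not change the argument.
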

\begin{proof}
By Theorem \ref{thm:ex_un_ivp}, we have existence and uniqueness of solutions \allowbreak {$\bf{x}~\in~C_{1-\gamma}[0,T]$} to the perturbed IVP \eqref{eq:IVP_pert} with the equivalent integral equation
\begin{align}
    \bf{x}(t) 
    &= 
    \frac{\tilde{\bf{x}}_0}{\Gamma(\gamma)} t^{\gamma-1} 
    +
    \rlint{0}{t}{\alpha}\left[\bf{f}(t,\bf{x}(t))+\nu\right]
    .\label{eq:i_thm_2_mu_ivp}
\end{align}
By assumption, the boundary condition $\rlint{0}{0}{1-\gamma}\bf{x}(0) = \rlint{0}{T}{1-\gamma}\bf{x}(T)$ holds. 
Proposition \ref{prop:rlint_semigroup} and \ref{prop:int_pol_full} give
\begin{equation}
\begin{aligned}
    \label{eq:ivp_pert_boundary_rhs}
    \rlint{0}{t}{1-\gamma}\bf{x}(t)
    &=
    \rlint{0}{t}{1-\gamma}
    \left[
    \frac{\tilde{\bf{x}}_0}{\Gamma(\gamma)} t^{\gamma-1} 
    +
    \rlint{0}{t}{\alpha}\left[\bf{f}(t,\bf{x}(t))+\nu\right]
    \right]
    \\
    &=
    \tilde{\bf{x}}_0
    +
    \rlint{0}{t}{1-\gamma+\alpha}\bf{f}(t,\bf{x}(t))
    +
    \rlint{0}{t}{1-\gamma+\alpha}\nu
    \\
    &=
    \tilde{\bf{x}}_0
    +
    \rlint{0}{t}{\zeta}\bf{f}(t,\bf{x}(t))
    +
    \rlint{0}{t}{\zeta}\nu.
\end{aligned}
\end{equation}
We can now compute the limit using Lemma 13 of \cite{Furati2012}:
\begin{align}
    \label{eq:ivp_pert_boundary_lhs}
    \lim_{t\to 0}
    \rlint{0}{t}{1-\gamma}\bf{x}(t)
    =
    \tilde{\bf{x}}_0.
\end{align}
Then, substituting equation \eqref{eq:ivp_pert_boundary_rhs} and \eqref{eq:ivp_pert_boundary_lhs} in the boundary conditions
of \eqref{eq:bvp} yields
\begin{align}
    \tilde{\bf{x}}_0 
    &=
    \tilde{\bf{x}}_0
    +
    \rlint{0}{T}{\zeta}\bf{f}(T,\bf{x}(T))
    +\fr{\nu T^{\zeta}}{\Gamma(\zeta+1)},
\end{align}
where solving for $\nu$ gives
\begin{align}
    \nu
    =
    -  
    {\Gamma(\zeta+1)}{T^{-\zeta}}
    \rlint{0}{T}{\zeta}\bf{f}(T,\bf{x}(T)),
\end{align}
hence satisfying equation \eqref{eq:ivp_pert_nu}.
Substituting this expression back in \eqref{eq:i_thm_2_mu_ivp} and evaluating $\rlint{0}{t}{\alpha}\nu $ using Proposition \ref{prop:int_pol_full} yields the integral equation \eqref{eq:ivp_pert_int_eq}.

For the reverse implication, assume $\nu$ is of the form \eqref{eq:ivp_pert_nu}.
Fractionally integrating integral equation  \eqref{eq:ivp_pert_int_eq} then gives
\begin{align}
    \rlint{0}{t}{1-\gamma}
    \bf{x}(t)
    &= 
    \rlint{0}{t}{1-\gamma}
    \left\{
    \frac{\tilde{\bf{x}}_0}{\Gamma(\gamma)} s^{\gamma-1} 
    +
    \rlint{0}{s}{\alpha}\bf{f}(s,\bf{x}(s))
    -
    \fr{\Gamma(\zeta+1)s^\alpha}{\Gamma(\alpha+1)T^\zeta}
    \rlint{0}{T}{\zeta}\bf{f}(T, \bf{x}(T))
    \right\}(t)
    \\
    &=
    \xnull
    +
    \rlint{0}{t}{\zeta}\bf{f}(t, \bf{x}(t))
    -
    (t/T)^\zeta
    \rlint{0}{T}{\zeta}\bf{f}(T, \bf{x}(T)),
\end{align}
of which direct computation yields $\rlint{0}{0}{1-\gamma}\bf{x}(0) = \xnull$ and $\rlint{0}{T}{1-\gamma}\bf{x}(T) = \xnull$.
Hence, we have satisfied the boundary conditions of \eqref{eq:bvp}, concluding the proof.
\end{proof}
To obtain solutions to perturbed integral equation \eqref{eq:ivp_pert_int_eq}, we construct the following iterative sequence:
\begin{equation} 
\label{eq:i_approximation_sequence}
\begin{aligned}
    \bf{x}_{0}(t, \xnull)
    &=
    \frac{\tilde{\bf{x}}_0}{\Gamma(\gamma)} t^{\gamma-1}
    \\
    \bf{x}_{m+1}(t, \xnull)
    &=
    \bf{x}_{0}(t, \xnull)
    +
    \conmapeval{%
    \bf{f}(t, \bf{x}_m(t, \xnull)
    },
\end{aligned}
\end{equation}
where $\conmap{}$ is a linear operator defined as
\begin{align}
    \label{eq:conmap}
    \conmapeval{\bf{y}} = 
    \rlint{0}{t}{\alpha}
    \bf{y}(t)
    -
    \deltaprefix
    \rlint{0}{T}{\zeta}
    \bf{y}(T),
    \quad 
    0 \leq t \leq T,
    \quad
    \zeta = 1-\gamma+\alpha.
\end{align}
First, we establish boundedness of the operator $\conmap{}$, which will be used to prove the main convergence result of solutions in Theorem \ref{thm:2_bvp_thm_1}.
Finally, in Theorem \ref{thm:bvp_delta_0}, we show the connection to the original BVP \eqref{eq:bvp}.
\begin{lemma}[Boundedness of $\conmap{}$]
\label{lemma:2_bvp_lemma_1}
Let $\bf{y} \in C_{1-\gamma}[0,T]$. 
Then, for all $t\in (0,T]$ it holds that
\begin{align}
    |t^{1-\gamma}\conmapeval{\bf{y}}|
    \leq 
    \xi(t) \|\bf{y}\|_{1-\gamma},
\end{align}
where 
\begin{align}
    \xi(t)
    :=
    \expressionxi.
\end{align}
Here, $\mathrm{B}$ denotes the (incomplete) beta function as introduced in Definition \ref{def:intro_beta} and \ref{def:intro_beta_incomplete}.
Furthermore, taking the supremum gives
\begin{align}
    \weighnorm{\conmap{\bf{y}}}
    \leq \Xi \weighnorm{\bf{y}},
    \quad \text{where}
    \quad \Xi:=\sup_{t\in (0,T]}\xi(t).
\end{align}
\end{lemma}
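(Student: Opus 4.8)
The plan is to bound $|t^{1-\gamma}\conmapeval{\bf{y}}|$ term by term using the explicit fractional-integral formulas for monomials from Propositions \ref{prop:int_pol_full}, \ref{prop:int_pol_inc_0_t} and \ref{prop:int_pol_inc_t_T}, after pulling out the weighted sup-norm of $\bf{y}$. Writing $\bf{y}(s) = s^{\gamma-1}\cdot s^{1-\gamma}\bf{y}(s)$ and using $|s^{1-\gamma}\bf{y}(s)| \le \weighnorm{\bf{y}}$ component-wise, the first piece $\rlint{0}{t}{\alpha}\bf{y}(t)$ is dominated by $\weighnorm{\bf{y}} \cdot \rlint{0}{t}{\alpha} s^{\gamma-1}$, which by Proposition \ref{prop:int_pol_full} (with $k = \gamma-1 > -1$, valid since $\gamma \in (0,1]$) equals $\frac{\Gamma(\gamma)}{\Gamma(\gamma+\alpha)} t^{\gamma+\alpha-1}$. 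Multiplying by $t^{1-\gamma}$ gives the first term $\frac{\Gamma(\gamma) t^\alpha}{\Gamma(\gamma+\alpha)}$ of $\xi(t)$.

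For the second piece, $\deltaprefix \rlint{0}{T}{\zeta}\bf{y}(T)$, the subtlety is that $\rlint{0}{T}{\zeta}\bf{y}(T)$ is a fractional integral over $[0,T]$ evaluated at the endpoint, and I need a bound in terms of $\weighnorm{\bf{y}}$ that, after multiplication by $t^\alpha$, produces the bracketed expression involving the two incomplete beta functions $\mathrm{B}_{t/T}(\gamma,\zeta)$ and $\mathrm{B}_{1-t/T}(\zeta,\gamma)$. My expectation is that this bracket arises not from bounding $\rlint{0}{T}{\zeta}\bf{y}(T)$ crudely but from a more careful argument: rather than estimating the two summands of $\conmap{}$ separately, one should combine them before taking absolute values, exploiting cancellation. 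Concretely, I would split the integral $\rlint{0}{T}{\zeta}\bf{y}(T)$ at the point $t$ into $\rlint{0}{t}{\zeta}$ and $\rlint{t}{T}{\zeta}$ contributions (using Proposition \ref{prop:rlint_semigroup}-type additivity of the integral domain), then recognize that the $\rlint{0}{t}{\alpha}\bf{y}$ term partially cancels against the $[0,t]$ part of the endpoint integral. Applying Proposition \ref{prop:int_pol_inc_0_t} to $\rlint{0}{t}{\zeta} s^{\gamma-1}$ gives a term with $\mathrm{B}_{t/T}(\gamma,\zeta)$-type structure and Proposition \ref{prop:int_pol_inc_t_T} to $\rlint{t}{T}{\zeta} s^{\gamma-1}$ gives the $\mathrm{B}_{1-t/T}(\zeta,\gamma)$ term; the coefficients $(1-\gamma-\alpha)$ and $\zeta$ should fall out of collecting these contributions with the $\frac{\Gamma(\zeta+1)}{\Gamma(\alpha+1)T^\zeta}$ prefactor and the identity $\zeta = 1-\gamma+\alpha$.

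The main obstacle will be getting the coefficient bookkeeping exactly right so that the constants match $\expressionxi$ — in particular tracking how $\Gamma(\zeta+1)/\Gamma(\alpha+1)$, the factors $T^{-\zeta}$, $T^\alpha$ and the powers of $t/T$ recombine, and verifying that the decomposition into incomplete beta functions genuinely telescopes rather than merely bounding above by something larger. I would handle the sign carefully: since the combined integrand may change sign, the triangle inequality must be applied to pieces that are individually sign-definite (the monomial $s^{\gamma-1}$ and the kernels are nonnegative), so the decomposition into $[0,t]$ and $[t,T]$ parts is essential for this to work cleanly. Once the pointwise bound $|t^{1-\gamma}\conmapeval{\bf{y}}| \le \xi(t)\weighnorm{\bf{y}}$ is established for every $t \in (0,T]$, the final claim follows immediately: take the supremum over $t$ on the left, bound $\xi(t)$ by $\Xi := \sup_{t\in(0,T]}\xi(t)$ on the right (noting $\xi$ is continuous on $(0,T]$ with finite limits at the endpoints, so $\Xi < \infty$), and conclude $\weighnorm{\conmap{\bf{y}}} \le \Xi \weighnorm{\bf{y}}$.
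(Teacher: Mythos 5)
Your proposal follows essentially the same route as the paper's proof: split $\rlint{0}{T}{\zeta}\bf{y}(T)$ at $s=t$, combine the $[0,t]$ portion with $\rlint{0}{t}{\alpha}\bf{y}(t)$ before taking absolute values so the cancellation is exploited, check sign-definiteness of the combined kernel, and evaluate the resulting monomial integrals with Propositions \ref{prop:int_pol_full}, \ref{prop:int_pol_inc_0_t} and \ref{prop:int_pol_inc_t_T} before taking the supremum. The one piece of ``bookkeeping'' that is genuinely load-bearing, and which the paper makes explicit, is the further split $\Gamma(\zeta+1)/\Gamma(\alpha+1)=\bigl(1+\fr{1-\gamma}{\alpha}\bigr)\Gamma(\zeta)/\Gamma(\alpha)$: only the coefficient-one part of the combined $[0,t]$ kernel, namely $(t-s)^{\alpha-1}-\fr{t^{\alpha}}{T^{\zeta}}(T-s)^{\zeta-1}$, is pointwise nonnegative, while the leftover $\fr{1-\gamma}{\alpha}$ piece must be bounded separately --- this is precisely where the coefficient $1-\gamma-\alpha$ in $\xi(t)$ comes from.
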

\begin{proof}
First, note that we can rewrite $\conmap{}$ as
\begin{align}
    \conmapeval{\bf{y}}
    =
    \underbrace{
    \rlint{0}{t}{\alpha}
    \bf{y}(t)
    - 
    \fr{\Gamma(\zeta+1) t^\alpha}{\Gamma(\alpha+1)T^\zeta}
    \rlint{0}{t}{\zeta}
    \bf{y}(T)
    }_{=:\mathcal{I\bf{y}}(t)}
    - 
    \underbrace{
    \fr{\Gamma(\zeta+1) t^\alpha}{\Gamma(\alpha+1)T^\zeta}
    \rlint{t}{T}{\zeta}
    \bf{y}(T)
    }_{=:\mathcal{J\bf{y}}(t)}.
\end{align}
Writing out $\mathcal{I}$ using Definition \ref{def:rl_int} and the property of the gamma function $\Gamma(z+1) = z\Gamma(z)$ gives
\begin{align}
    \mathcal{I}\bf{y}(t)
    =
    \fr{1}{\Gamma(\alpha)}
    \int_{0}^t
    (t-s)^{\alpha-1}
    \bf{y}(s)\dto s
    -
    \fr{\zeta t^\alpha}{
    \alpha \Gamma(\alpha)
    T^\zeta}
    \int_{0}^t
    (T-s)^{\zeta-1}
    \bf{y}(s)\dto s.
\end{align}
Rewriting this as one integral and using the substitution $\zeta = 1-\gamma + \alpha$ leads to
\begin{align}
    \mathcal{I}\bf{y}(t)
    =&
    \fr{1}{\Gamma(\alpha)}
    \int_{0}^t
    (t-s)^{\alpha-1}
    \bf{y}(s)\dto s
    \\
    &-
    \left(
    \fr{1-\gamma}{\alpha} +1
    \right)
    \fr{t^\alpha}{T^\zeta}
    \fr{1}{\Gamma(\alpha)}
    \int_{0}^t
    (T-s)^{\zeta-1}
    \bf{y}(s)\dto s
    \\
    =&
    \underbrace{
    \fr{1}{\Gamma(\alpha)}
    \int_{0}^t
    \left[
    (t-s)^{\alpha-1}
    -
    \fr{t^\alpha}{T^\zeta}
    (T-s)^{\zeta-1}
    \right]
    \bf{y}(s)\dto s
    }_{=:\mathcal{I}_1 \bf{y}}
    \\
    &
    -
    \underbrace{
    \left(
    \fr{1-\gamma}{\alpha}
    \right)
    \fr{t^\alpha}{T^\zeta}
    \fr{1}{\Gamma(\alpha)}
    \int_{0}^t
    (T-s)^{\zeta-1}
    \bf{y}(s)\dto s
    }_{=:\mathcal{I}_2 \bf{y}}.
\end{align}
Then, by Hölder's inequality we have:
\begin{align}
    |
    t^{1-\gamma}\mathcal{I}_1 \bf{y}(t)
    |
    &=
    \left|
    \fr{t^{1-\gamma}}{\Gamma(\alpha)}
    \int_{0}^t
    \fr{
    \left[
    (t-s)^{\alpha-1}
    -
    \fr{t^\alpha}{T^\zeta}
    (T-s)^{\zeta-1}
    \right]
    }{s^{1-\gamma}}
    s^{1-\gamma}\bf{y}(s)\dto s
    \right|
    \\
    &\leq
    \fr{t^{1-\gamma}}{\Gamma(\alpha)}
    \int_{0}^t
    \fr{
    (t-s)^{\alpha-1}
    -
    \fr{t^\alpha}{T^\zeta}
    (T-s)^{\zeta-1}
    }{s^{1-\gamma}}
    \dto s
    \weighnorm{\bf{y}},
\end{align}
since $s \geq 0$ and
\begin{align}
    (t-s)^{\alpha-1}
    -
    \fr{t^\alpha}{T^\zeta}
    (T-s)^{\zeta-1}
    &=
    (t-s)^{\alpha-1}
    -
    T^{\gamma-1}
    \left(\fr{t}{T}\right)^\alpha
    (T-s)^{1-\gamma+\alpha-1}
    \\
    &=
    (t-s)^{\alpha-1}
    -
    \left(\fr{T-s}{T}\right)^{1-\gamma}
    \left(\fr{t}{T}\right)^\alpha
    (T-s)^{\alpha-1}
    \\
    &\geq
    (t-s)^{\alpha-1}
    -
    \left(\fr{t}{T}\right)^\alpha
    (T-s)^{\alpha-1}
    \\
    &\geq
    (t-s)^{\alpha-1}\fr{T-t}{T}\geq 0.
    \label{eq:i_positivity}
\end{align}
Rewriting and evaluating the integral using the result of Proposition \ref{prop:int_pol_full} and \ref{prop:int_pol_inc_0_t} results in 
\begin{align}
    |t^{1-\gamma}\mathcal{I}_1 \bf{y}(t)|
    &\leq
    \left[
    t^{1-\gamma}
    \rlint{0}{t}{\alpha}\{s^{\gamma-1}\}(t)
    -
    \fr{\Gamma(\zeta)t^\zeta}{\Gamma(\alpha)T^\zeta}
    \rlint{0}{t}{\zeta}\{s^{\gamma-1}\}(T)
    \right] \weighnorm{\bf{y}}
    \\
    &=
    \left[
    \fr{\Gamma(\gamma)t^\alpha}{\Gamma(\gamma+\alpha)}
    -
    \fr{t^{\zeta}T^\alpha}{\Gamma(\alpha)T^\zeta}
    \mathrm{B}_{t/T}(\gamma, \zeta)
    \right] \weighnorm{\bf{y}}.
\end{align}
Applying the same strategy for $\mathcal{I}_2$, we have
\begin{align}
    |t^{1-\gamma}\mathcal{I}_2 \bf{y}(t)|
    \leq
    \left(
    \fr{1-\gamma}{\alpha}
    \right)
    \fr{t^{\zeta}T^\alpha}{\Gamma(\alpha)T^\zeta}
    \mathrm{B}_{t/T}(\gamma, \zeta)
    \weighnorm{\bf{y}}.
\end{align}
For $\mathcal{J}$, proceeding as before but using Proposition \ref{prop:int_pol_inc_t_T} gives
\begin{align}
    |t^{1-\gamma}\mathcal{J} \bf{y}(t)|
    \leq
    \fr{\zeta t^\zeta T^\alpha}{\Gamma(\alpha+1) T^\zeta}
    \mathrm{B}_{1-t/T}(\zeta, \gamma)
    \weighnorm{\bf{y}}.
\end{align}
Finally, combining the bounds using the triangle inequality yields
\begin{align}
    &|t^{1-\gamma}\conmap{\bf{y}(t)} |
    \\
    &=
    |
    t^{1-\gamma}
    \left[
    \mathcal{I}_1\bf{y}(t)
    -
    \mathcal{I}_2\bf{y}(t)
    -
    \mathcal{J}\bf{y}(t)
    \right]
    |
    \\
    &\leq
    | t^{1-\gamma}\mathcal{I}_1{\bf{y}(t)} |
    +
    | t^{1-\gamma}\mathcal{I}_2{\bf{y}(t)} |
    +
    | t^{1-\gamma}\mathcal{J}{\bf{y}(t)} |
    \\
    &\leq 
    \biggl(
    \expressionxi
    \biggr)
    \weighnorm{\bf{y}}
    \\
    &=\xi(t) \weighnorm{\bf{y}},
\end{align}
satisfying the inequality and concluding the proof.
\end{proof}
\subsection{Existence and uniqueness of solutions to the perturbed IVP}
For $D \subseteq \mathbb{R}^d$ closed and bounded, we denote  
\begin{align}
    X_D[0,T] = \{\bf{u}:(0, T] \to \mathbb{R}^d, \, \bf{u} \in C_{1-\gamma}[0, T] : t^{1-\gamma} \bf{u}(t) \in D \}.
\end{align}
Consider the following conditions for BVP \eqref{eq:bvp} and its corresponding perturbed IVP \eqref{eq:IVP_pert}:
\begin{enumerate}[label=\textbf{A.\arabic*}]
    %%%
    %%%
    %%%
    \item 
    \label{item:bvp_hilfer_domain}
    We have $\fr{\xnull}{\Gamma(\gamma)} \in D$ and
    \begin{align}
        \left\{
        \bf{u}
        \in C_{1-\gamma}[0,T]
        :
        \bigweighnorm{
        \bf{u}
        -
        \fr{\xnull t^{\gamma-1}}{\Gamma(\gamma)}
        }
        \leq \Xi \bf{m}
        \right\}
        \subseteq X_D[0,T],
    \end{align}
    with $\Xi  \in \mathbb{R}$ satisfying
    $
        \Xi 
        =
        \sup_{t\in (0,T]}
        \xi(t)
    $
    with
    \begin{align}
        \xi(t)
        =
        \expressionxi .
        \label{eq:i_xi_1}
    \end{align}
    where $\zeta = 1-\gamma+\alpha$ and $\mathrm{B}$ denotes the (incomplete) beta function as introduced in Definition~\ref{def:intro_beta} and \ref{def:intro_beta_incomplete}.
    %%%
    %%%
    %%%
    \item 
    \label{item:bvp_hilfer_bdd_lip}
    The function $\bf{f}:(0,T] \times \mathbb{R}^d \mapsto \mathbb{R}^d$ satisfies $\bf{f}\in C_{1-\gamma}[0,T]$ and there exists some nonnegative $\bf{m} \in \mathbb{R}^d$ such that
    \begin{align}    
    ||t \mapsto \bf{f}(\cdot ,\bf{u}) ||_{1-\gamma} \leq \bf{m}, \label{eq:i_assumption_1_f_bdd}
    \end{align}
    for all $\bf{u} \in X_D[0,T]$.
    Furthermore, for all $t \in (0,T]$ and $\bf{u} , \bf{v} \in X_D[0,T]$, we have that $\bf{f}$ satisfies the Lipschitz condition \eqref{eq:lipschitz_cond} in the second coordinate with coefficients $K \in \mathbb{R}_{\geq 0}^d$.
    %%%
    %%%
    %%%
    \item 
    \label{item:bvp_hilfer_spectral_radius}
    The matrix $Q$ defined as
    \begin{align}
        Q:=\Xi K,
    \label{eq:i_assumption_4_Q}      
    \end{align}
    satisfies the spectral radius requirement
    \begin{align}
        \rho(Q) <1 \label{eq:i_assumption_5_Q_spectral_radius}.       
    \end{align}
\end{enumerate}
We use assumptions \ref{item:bvp_hilfer_domain} - \ref{item:bvp_hilfer_spectral_radius} as formulated above to obtain our main analytical results:
\begin{theorem}[Existence and uniqueness of solutions]
\label{thm:2_bvp_thm_1}
Assume that conditions
\ref{item:bvp_hilfer_domain} - \ref{item:bvp_hilfer_spectral_radius}
are satisfied.
Then, it holds that:
\begin{enumerate}[label=\textbf{C.\arabic*}]
    \item 
    \label{item:bvp_thm_x_m}
    Functions of the sequence \eqref{eq:i_approximation_sequence} are in $C_{1-\gamma}[0,T]$ and satisfy the {fractional-periodic boundary conditions} of \eqref{eq:bvp}:
    \begin{align}
    &\rlint{0}{0}{1-\gamma}\bf{x}_m(0, \xnull)
    =
    \rlint{0}{T}{1-\gamma}\bf{x}_m(T, \xnull).
    \end{align}
    \item
    \label{item:bvp_thm_limit_function}
    The sequence of functions \eqref{eq:i_approximation_sequence} converges in $C_{1-\gamma}[0,T]$ as $m\to \infty$ for $t \in [0,T]$ to the limit function 
    \begin{align}
        \xvarlim(t, \bf{\tilde{x}}_0)
        :=
        \lim_{m\to \infty}\bf{{x}}_m(t,\bf{\tilde{x}}_0),
        \label{eq:bvp_limfun}
    \end{align}
    with the following error bound:
    \begin{align}
        \weighnorm{
        \bf{x}_m(t, \bf{\tilde{x}}_0) - \xvarlim(t, \bf{\tilde{x}}_0)
        }
        \leq 
        Q^m(I-Q)^{-1}\Xi \bf{m}
        .\label{eq:error_est}
    \end{align}
    \item \label{item:bvp_thm_limit_function_int_eq}
    If furthermore $\bf{f} \in C^{\beta(1-\alpha)}_{1-\gamma}[0,T]$,
    the limit function satisfies $\xvarlim \in C^\gamma_{1-\gamma}[0,T]$ and is the unique solution of integral equation \eqref{eq:ivp_pert_int_eq},
    which is the corresponding equivalent integral equation to the perturbed IVP \eqref{eq:IVP_pert}
    for 
    \begin{align}
        \nu = \Delta_T(\tilde{\bf{x}}_0)
        =
        -
        \Gamma(\zeta+1) T^{-\zeta}
        \rlint{0}{T}{\zeta}\bf{f}(T, \bf{x}(T)).
    \end{align}
    \item 
    \label{item:bvp_thm_limit_x_bd_conditions}
    The limit function satisfies the fractional-periodic boundary conditions of \eqref{eq:bvp}:
    \begin{align}
        \rlint{0}{0}{1-\gamma}\xvarlim(0, \xnull)
    =
    \rlint{0}{T}{1-\gamma}\xvarlim(T, \xnull).
    \end{align}
\end{enumerate}
\end{theorem}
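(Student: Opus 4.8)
The plan is to obtain all four claims from a single Banach‑type iteration in the complete weighted space $C_{1-\gamma}[0,T]$, using the operator bound of Lemma~\ref{lemma:2_bvp_lemma_1} and the structural hypotheses \ref{item:bvp_hilfer_domain}--\ref{item:bvp_hilfer_spectral_radius}. I would prove \ref{item:bvp_thm_x_m} by induction on $m$, establishing simultaneously that $\bf{x}_m \in C_{1-\gamma}[0,T]$, that $\bf{x}_m \in X_D[0,T]$, and that $\bf{x}_m$ satisfies the boundary conditions. The base case is immediate since $t^{1-\gamma}\bf{x}_0(t,\xnull) = \xnull/\Gamma(\gamma)$ is constant and $\weighnorm{\bf{x}_0 - \xnull t^{\gamma-1}/\Gamma(\gamma)} = 0 \le \Xi\bf{m}$, so $\bf{x}_0 \in X_D[0,T]$ by \ref{item:bvp_hilfer_domain}. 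For the inductive step, if $\bf{x}_m \in X_D[0,T]$ then $\bf{f}(\cdot,\bf{x}_m(\cdot)) \in C_{1-\gamma}[0,T]$ with $\weighnorm{\bf{f}(\cdot,\bf{x}_m(\cdot))} \le \bf{m}$ by \ref{item:bvp_hilfer_bdd_lip}, hence $\bf{x}_{m+1} = \bf{x}_0 + \conmap{\bf{f}(\cdot,\bf{x}_m(\cdot))} \in C_{1-\gamma}[0,T]$ ($\conmap{}$ being bounded, hence continuous, on $C_{1-\gamma}[0,T]$ by Lemma~\ref{lemma:2_bvp_lemma_1}), and $\weighnorm{\bf{x}_{m+1} - \bf{x}_0} = \weighnorm{\conmap{\bf{f}(\cdot,\bf{x}_m(\cdot))}} \le \Xi\bf{m}$ puts $\bf{x}_{m+1} \in X_D[0,T]$ again by \ref{item:bvp_hilfer_domain}. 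The boundary conditions I would verify by applying $\rlint{0}{t}{1-\gamma}$ to the explicit form of $\bf{x}_m$ and reusing verbatim the fractional‑integral computation from the reverse implication in the proof of Lemma~\ref{lemma:pert_ivp_bcs} (which only uses Propositions~\ref{prop:rlint_semigroup} and~\ref{prop:int_pol_full}), obtaining $\rlint{0}{t}{1-\gamma}\bf{x}_m(t) = \xnull + \rlint{0}{t}{\zeta}\bf{f}(t,\bf{x}_{m-1}(t)) - (t/T)^\zeta\rlint{0}{T}{\zeta}\bf{f}(T,\bf{x}_{m-1}(T))$, whose limit as $t\downarrow 0$ and value at $t=T$ are both $\xnull$ (the case $m=0$ being Proposition~\ref{prop:int_pol_full} directly).

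For \ref{item:bvp_thm_limit_function} I would set $\bf{d}_m := \bf{x}_{m+1}(\cdot,\xnull) - \bf{x}_m(\cdot,\xnull)$. Linearity of $\conmap{}$ gives $\bf{d}_m = \conmap{[\bf{f}(\cdot,\bf{x}_m(\cdot)) - \bf{f}(\cdot,\bf{x}_{m-1}(\cdot))]}$ for $m \ge 1$, so Lemma~\ref{lemma:2_bvp_lemma_1} combined with the Lipschitz bound \eqref{eq:lipschitz_cond} (multiplied by $t^{1-\gamma}\ge 0$ and then taking componentwise suprema) gives the vector inequality $\weighnorm{\bf{d}_m} \le \Xi K\weighnorm{\bf{d}_{m-1}} = Q\weighnorm{\bf{d}_{m-1}}$, hence $\weighnorm{\bf{d}_m} \le Q^m\weighnorm{\bf{d}_0} \le Q^m\Xi\bf{m}$. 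Since $\rho(Q) < 1$ by \eqref{eq:i_assumption_5_Q_spectral_radius}, the Neumann series $\sum_{k\ge 0}Q^k = (I-Q)^{-1} \ge 0$ converges, so telescoping yields $\weighnorm{\bf{x}_n - \bf{x}_m} \le Q^m(I-Q)^{-1}\Xi\bf{m}$ for $n > m$; as $Q^m \to 0$, the sequence is Cauchy componentwise in the complete space $C_{1-\gamma}[0,T]$, so it converges to a limit $\xvarlim$, and letting $n \to \infty$ gives the error estimate \eqref{eq:error_est}.

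For \ref{item:bvp_thm_limit_function_int_eq} I would pass to the limit in \eqref{eq:i_approximation_sequence}: $\bf{f}(\cdot,\bf{x}_m(\cdot)) \to \bf{f}(\cdot,\xvarlim(\cdot))$ in $C_{1-\gamma}[0,T]$ by the Lipschitz bound (note $\xvarlim$ still lies in the closed ball of \ref{item:bvp_hilfer_domain}, hence in $X_D[0,T]$, so the estimate applies), and continuity of $\conmap{}$ then gives $\xvarlim = \bf{x}_0 + \conmap{\bf{f}(\cdot,\xvarlim(\cdot))}$, which is exactly integral equation \eqref{eq:ivp_pert_int_eq}. Using $\rlint{0}{t}{\alpha}\{c\}(t) = c\,t^\alpha/\Gamma(\alpha+1)$ (Proposition~\ref{prop:int_pol_full}), equation \eqref{eq:ivp_pert_int_eq} is recognised as the equivalent integral equation $\bf{x}(t) = \xnull t^{\gamma-1}/\Gamma(\gamma) + \rlint{0}{t}{\alpha}[\bf{f}(t,\bf{x}(t)) + \nu]$ of the perturbed IVP \eqref{eq:IVP_pert} at the now constant value $\nu = \Delta_T(\xnull)$; invoking Theorem~\ref{thm:ex_un_ivp} for that IVP — where the extra hypothesis $\bf{f} \in C^{\beta(1-\alpha)}_{1-\gamma}[0,T]$ enters — places $\xvarlim$ in $C^\gamma_{1-\gamma}[0,T]$ and gives uniqueness for \eqref{eq:ivp_pert_int_eq}. (Uniqueness also follows directly: if $\bf{x},\bf{x}^* \in X_D[0,T]$ both solve \eqref{eq:ivp_pert_int_eq}, then $\bf{x} - \bf{x}^* = \conmap{[\bf{f}(\cdot,\bf{x}) - \bf{f}(\cdot,\bf{x}^*)]}$, so $\weighnorm{\bf{x}-\bf{x}^*} \le Q^m\weighnorm{\bf{x}-\bf{x}^*} \to 0$.) Claim \ref{item:bvp_thm_limit_x_bd_conditions} then needs no extra hypothesis: applying $\rlint{0}{t}{1-\gamma}$ to \eqref{eq:ivp_pert_int_eq} and reusing the reverse‑implication computation of Lemma~\ref{lemma:pert_ivp_bcs} gives $\rlint{0}{0}{1-\gamma}\xvarlim(0,\xnull) = \xnull = \rlint{0}{T}{1-\gamma}\xvarlim(T,\xnull)$.

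The contraction estimate, the telescoping and the Neumann‑series bound are routine once Lemma~\ref{lemma:2_bvp_lemma_1} is available (the only mild care is that $Q$ is a matrix, so one works with componentwise inequalities and $\rho(Q)<1$ rather than a scalar contraction constant). The step I expect to be the real obstacle is the regularity assertion $\xvarlim \in C^\gamma_{1-\gamma}[0,T]$ in \ref{item:bvp_thm_limit_function_int_eq}: the fixed‑point equation \eqref{eq:ivp_pert_int_eq} carries the extra monomial term $-\deltaprefix\rlint{0}{T}{\zeta}\bf{f}(T,\xvarlim(T))$ absent from the unperturbed Cauchy problem, so it is not literally of Furati's form, and one has to check that after absorbing this term into the constant perturbation $\nu = \Delta_T(\xnull)$ the right‑hand side $\bf{f}(t,\xvarlim(t)) + \nu$ still meets the hypotheses of Theorem~\ref{thm:ex_un_ivp}; this is exactly where the interaction between $\rlint{0}{t}{\alpha}$ applied to a constant and the smoothing of $\hilfder{0}{t}{\gamma,0}$ must be controlled.
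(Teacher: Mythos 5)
Your proposal is correct and follows essentially the same route as the paper: induction to keep the iterates in $X_D[0,T]$ and verify the boundary conditions via the computation of Lemma~\ref{lemma:pert_ivp_bcs}, a componentwise matrix contraction with $\rho(Q)<1$, telescoping plus the Neumann series for the error bound, passage to the limit in the fixed-point relation, and Lemma~\ref{lemma:pert_ivp_bcs} together with Theorem~\ref{thm:ex_un_ivp} for the regularity, uniqueness and boundary conditions of the limit. One minor point: the norm bound of Lemma~\ref{lemma:2_bvp_lemma_1} alone does not give $\conmap{\bf{y}} \in C_{1-\gamma}[0,T]$ (continuity of $t^{1-\gamma}\conmap{\bf{y}}$ up to $t=0$); for that mapping property the paper invokes Lemma~11 of Furati et al., which you should cite at that step.
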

\begin{proof}
    Statement \ref{item:bvp_thm_x_m} follows from the construction of our approximating sequence \eqref{eq:i_approximation_sequence}.
    Since $\bf{x}_0 \in C_{1-\gamma}[0,T]$, Lemma 11 of \cite{Furati2012} gives that $\bf{x}_m \in C_{1-\gamma}[0,T]$ for all $m \geq 1$.
    For the boundary conditions, we observe that by the properties of sequence \eqref{eq:i_approximation_sequence} we have
    \begin{align}
        \rlint{0}{t}{1-\gamma}\bf{x}_{m}(t, \xnull)
        =&
        \rlint{0}{t}{1-\gamma}
        \biggl\{
        \frac{\tilde{\bf{x}}_0}{\Gamma(\gamma)} s^{\gamma-1} 
        +
        \rlint{0}{s}{\alpha}\bf{f}(s,\bf{x}_{m-1}(s, \xnull))
        -
        \fr{\Gamma(\zeta+1)s^\alpha}{\Gamma(\alpha+1)T^\zeta}
        \rlint{0}{T}{\zeta}\bf{f}(T, \bf{x}_{m-1}(T, \xnull))
        \biggr\}(t)
        \\
        =&
        \xnull 
        + 
        \rlint{0}{t}{\zeta}\bf{f}(t,\bf{x}_{m-1}(t, \xnull))
        -
        (t/T)^\zeta
        \rlint{0}{T}{\zeta}\bf{f}(T, \bf{x}_{m-1}(T, \xnull)).
    \end{align}
    And hence, as in Lemma \ref{lemma:pert_ivp_bcs}, direct computation shows that we have satisfied the boundary conditions $\rlint{0}{0}{1-\gamma}\bf{x}_{m}(0, \xnull) = \rlint{0}{T}{1-\gamma}\bf{x}_{m}(T, \xnull)$.

    To prove statement \ref{item:bvp_thm_limit_function}, we show {that} the sequence $(\bf{x}_m)_{m\geq 1}$ as defined in equation \eqref{eq:i_approximation_sequence} is a Cauchy sequence in the Banach space $C_{1-\gamma}[0,T]$ equipped with the norm $\weighnorm{\cdot}$, thus showing convergence to the limit function $\xvarlim(t,\bf{\tilde{x}}_0)$.
    First, we aim to prove that 
    \begin{align} 
    \label{eq:domain_induction}
    \bf{x}_m(t,\tilde{\bf{x}}_0) \in X_D[0,T]
    \quad \textrm{for all} \quad m \geq 1.
    \end{align}
    By assumption~\ref{item:bvp_hilfer_domain}, it suffices to show that $\weighnorm{\bf{x}_m(t, \xnull) - \bf{x}_0(t, \xnull)} \leq \Xi \bf{m}$.
    We proceed by mathematical induction.
    
    We first take an induction base case of $m = 0$, which is satisfied as $\bf{x}_0(t, \xnull) \in X_D[0,T]$ since by assumption \ref{item:bvp_hilfer_domain}, {$\frac{\xnull}{\Gamma(\gamma)} \in D$}. 
    Then, for $m=1$ we have by Lemma~\ref{lemma:2_bvp_lemma_1} and \ref{eq:i_assumption_1_f_bdd} that
    \begin{align}
        \weighnorm{
        \bf{x}_1(t, \xnull)
        -
        \bf{x}_0(t, \xnull)
        }
        =
        \weighnorm{\conmap{\bf{f}(t, \bf{x}_0(t, \xnull)}}
        \leq \Xi \bf{m},
        \label{eq:i_main_thm_cauchy_zero}
    \end{align}
    thus satisfying \eqref{eq:domain_induction} for $m=1$.
    
    Now, for the induction hypothesis, assume $\weighnorm{
    \bf{x}_k(t, \xnull)
    -
    \bf{x}_0(t, \xnull)
    }   \leq \Xi \bf{m}$ for some $k > 1$ and thus $\bf{x}_k(t, \xnull) \in X_D[0,T]$.
    Then, for $k+1$ we have
    \begin{align}
        \weighnorm{
        \bf{x}_{k+1}(t, \xnull)
        -
        \bf{x}_0(t, \xnull)
        }   
        =
        \weighnorm{
        \conmap{\bf{f}(t,\bf{x}_{k}(t, \xnull))}
        }   
        \leq \Xi \bf{m},
    \end{align}
    and hence $\bf{x}_{k+1}(t, \xnull) \in X_D[0,T]$, proving the induction claim.

    To show the sequence defined by \eqref{eq:i_approximation_sequence} converges, we prove {that} the following estimate holds for $m \geq 1$
    \begin{align}
        \weighnorm{
            \xm{m}
            -
            \xm{m-1}
        }
        \leq \Xi Q^{m-1} \bf{m}.
        \label{eq:i_thm_1_conv_ind_hyp_statement}
    \end{align}
    We again follow mathematical induction.
    For the base case $m=1$, this follows from \eqref{eq:i_main_thm_cauchy_zero}.
    For the induction hypothesis, assume that statement \eqref{eq:i_thm_1_conv_ind_hyp_statement} holds for some $k>1$. 
    Hence, $$\weighnorm{
            \xm{k}
            -
            \xm{k-1}
        }
        \leq \Xi Q^{k-1} \bf{m}.$$
    Then, for $k+1$, we have
    \begin{align}
        \weighnorm{
            \xm{k+1}
            -
            \xm{k}
        }
        &=
        \weighnorm{
            \conmap{\bf{f}(\xm{k})}
            -
            \conmap{\bf{f}(\xm{k-1})}
        }
        \\
        &=
        \weighnorm{
            \conmap{\{
            \bf{f}(\xm{k})}
            -
            \bf{f}(\xm{k-1})
            \}
        }
        \\
        &\leq 
        \Xi
        \weighnorm{
            \bf{f}(\xm{k})
            -
            \bf{f}(\xm{k-1})
        }
        \\
        &\leq 
        Q
        \weighnorm{
            \xm{k}
            -
            \xm{k-1}
        }
        \leq \Xi Q^{k} \bf{m}.
    \end{align}
    Then, taking some $j \in \mathbb{N}$, we have by telescoping and the triangle inequality that
    \begin{equation} 
    \label{eq:bvp_m_plus_j}
    \begin{aligned}    
        \weighnorm{\bf{x}_{m+j}(t,\tilde{\bf{x}}_0)-\bf{x}_m(t,\tilde{\bf{x}}_0)}
        &=
        \bigweighnorm{
        \sum_{k=1}^j
        \left[
        \bf{x}_{m+k}(t,\tilde{\bf{x}}_0)-\bf{x}_{m+k-1}(t,\tilde{\bf{x}}_0)
        \right]
        }
        \\
        &\leq
        \sum_{k=1}^j
        \weighnorm{
        \bf{x}_{m+k}(t,\tilde{\bf{x}}_0)-\bf{x}_{m+k-1}(t,\tilde{\bf{x}}_0)
        }
        \\
        &\leq 
        \sum_{k=1}^j
        \Xi
        Q^{m+k-1}\bf{m}
        =
        \Xi
        Q^m
        \sum_{k=0}^{j-1}
        Q^{k}\bf{m}.
    \end{aligned}
    \end{equation}
    Now, since by assumption \ref{item:bvp_hilfer_spectral_radius} the spectral radius $\rho(Q) < 1$, we know that $\lim_{m\to \infty} Q^m = O_d$, where $O_d$ is the $d$-dimensional zero-matrix.
    Hence, by taking the limit as $m\to \infty$, we have a Cauchy sequence $({\bf{x}_m})_{m\geq 1}$ defined by \eqref{eq:i_approximation_sequence} with uniform converge in $X_D[0,T]$, proving claim 2.
    For the error estimate \eqref{eq:error_est} of statement \ref{item:bvp_thm_limit_function}, note that since $\rho(Q)<1$, it also holds that 
    \begin{align}
        \sum_{k=0}^{j-1}Q^k
        \leq (I_d - Q)^{-1},
    \end{align}
    where $I_d$ is the $d$-dimensional unit matrix.
    Combining this with \eqref{eq:bvp_m_plus_j} and taking the limit $j \to \infty$, we have
    \begin{align}
        \weighnorm{\bf{x}_{\infty}(t,\tilde{\bf{x}}_0)-\bf{x}_m(t,\tilde{\bf{x}}_0)}
        \leq 
        \Xi
        (I_d - Q)^{-1}
        Q^m
        \bf{m},
    \end{align}
    providing the error bound.
    
    %%%% 3
    Consider statement \ref{item:bvp_thm_limit_function_int_eq}.
    Since by construction of sequence \eqref{eq:i_approximation_sequence} the limit $\xvarlim(t,\xnull)$ satisfies 
    \begin{align}\xvarlim(t,\xnull) = \fr{\xnull}{\Gamma(\gamma)}t^{\gamma-1} + \conmapeval{\bf{f}(t, \xvarlim(t,\xnull))},
    \end{align}
    it follows that $\xvarlim(t,\xnull)$ satisfies integral equation \eqref{eq:ivp_pert_int_eq}.
    Then, by Lemma \ref{eq:ivp_pert_nu}, we have equivalence to the perturbed IVP with $\xvarlim \in C^\gamma_{1-\gamma}[0,T]$ and $ \nu = \Delta_T(\tilde{\bf{x}}_0)
        =
        -
        \fr{\Gamma(\zeta+1)}{T^\zeta}
        \rlint{0}{T}{\zeta}\bf{f}(T, \bf{x}(T))$.
    
    For uniqueness, we take two solutions, $\bf{x}_a(t)$ and $\bf{x}_b(t)$ of integral equation \eqref{eq:ivp_pert_int_eq}.
    Then, by Lemma \ref{lemma:2_bvp_lemma_1} and Assumption \ref{item:bvp_hilfer_bdd_lip}
    \begin{align}
        \weighnorm{\bf{x}_a - \bf{x}_b}
        &=
        \weighnorm{
        \conmap{\bf{x}_a}
        -
        \conmap{\bf{x}_b}
        }
        \\
        &=
        \weighnorm{
        \conmap{\{
        \bf{x}_a
        -
        \bf{x}_b
        \}}
        }
        \\
        &\leq
        \Xi
        \weighnorm{\bf{f}(t,\bf{x}_a(t))-\bf{f}(t,\bf{x}_b(t))}
        \\
        &\leq
        Q
        \weighnorm{\bf{x}_a - \bf{x}_b}.
    \end{align}
    Since $\rho(Q) < 1$ by \ref{item:bvp_hilfer_spectral_radius}, we must have $\weighnorm{\bf{x}_a(t) - \bf{x}_b(t)} = \bf{0}$, which yields $\bf{x}_a(t) = \bf{x}_b(t)$.

    %%%% 4
    Finally, statement \ref{item:bvp_thm_limit_x_bd_conditions} follows from statement \ref{item:bvp_thm_limit_function_int_eq} and Lemma \ref{lemma:pert_ivp_bcs}.
\end{proof}
\subsection{Connection to the original boundary value problem}
\begin{theorem}[Connection to the original BVP]
\label{thm:bvp_delta_0}
Assume conditions \ref{item:bvp_hilfer_domain} - \ref{item:bvp_hilfer_spectral_radius} hold.
Then, solution $\xvarlim(t, \tilde{\bf{x}}_0)$ to the perturbed IVP \eqref{eq:IVP_pert} is a solution of BVP \eqref{eq:bvp} if and only if $\tilde{\bf{x}}_0 \in \mathbb{R}^d$ is a solution of 
\begin{align}
    \Delta_T(\tilde{\bf{x}}_0) = \bf{0}, \label{eq:i_thm_3_statement}
\end{align}
where $\Delta_T$ is given in \eqref{eq:ivp_pert_nu}.
\end{theorem}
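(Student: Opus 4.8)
The plan is to read the statement off Theorem~\ref{thm:2_bvp_thm_1}, using that the perturbation appearing in the perturbed IVP~\eqref{eq:IVP_pert} is precisely the quantity $\Delta_T(\tilde{\bf{x}}_0)$. First I would recall from statement~\ref{item:bvp_thm_limit_function_int_eq} that $\xvarlim(\cdot,\tilde{\bf{x}}_0)$ is the unique solution of the integral equation~\eqref{eq:ivp_pert_int_eq}, and that, since $\rlint{0}{t}{\alpha}\{1\}(t)=t^\alpha/\Gamma(\alpha+1)$ by Proposition~\ref{prop:int_pol_full}, equation~\eqref{eq:ivp_pert_int_eq} can be rewritten as
\begin{align*}
    \xvarlim(t,\tilde{\bf{x}}_0)
    =
    \frac{\tilde{\bf{x}}_0}{\Gamma(\gamma)}t^{\gamma-1}
    +
    \rlint{0}{t}{\alpha}\bigl[\bf{f}(t,\xvarlim(t,\tilde{\bf{x}}_0))+\Delta_T(\tilde{\bf{x}}_0)\bigr].
\end{align*}
By Theorem~\ref{thm:ex_un_ivp} (equivalently, Lemma~\ref{lemma:pert_ivp_bcs}) this is the equivalent integral equation of the perturbed IVP~\eqref{eq:IVP_pert} with $\nu=\Delta_T(\tilde{\bf{x}}_0)$, so in particular $\xvarlim(\cdot,\tilde{\bf{x}}_0)$ satisfies $\hilfder{0}{t}{\alpha,\beta}\xvarlim(t,\tilde{\bf{x}}_0)=\bf{f}(t,\xvarlim(t,\tilde{\bf{x}}_0))+\Delta_T(\tilde{\bf{x}}_0)$ on $(0,T)$. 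I would also record, from statement~\ref{item:bvp_thm_limit_x_bd_conditions}, that $\xvarlim(\cdot,\tilde{\bf{x}}_0)$ satisfies the fractional-periodic boundary condition of~\eqref{eq:bvp} for \emph{every} admissible $\tilde{\bf{x}}_0$.

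With these two facts in hand the equivalence is immediate. For the ``if'' direction, suppose $\Delta_T(\tilde{\bf{x}}_0)=\bf{0}$; then the identity above reduces to $\hilfder{0}{t}{\alpha,\beta}\xvarlim(t,\tilde{\bf{x}}_0)=\bf{f}(t,\xvarlim(t,\tilde{\bf{x}}_0))$ on $(0,T)$, and together with the boundary condition already recorded this shows $\xvarlim(\cdot,\tilde{\bf{x}}_0)$ solves BVP~\eqref{eq:bvp}. For the ``only if'' direction, suppose $\xvarlim(\cdot,\tilde{\bf{x}}_0)$ solves BVP~\eqref{eq:bvp}, so $\hilfder{0}{t}{\alpha,\beta}\xvarlim(t,\tilde{\bf{x}}_0)=\bf{f}(t,\xvarlim(t,\tilde{\bf{x}}_0))$ on $(0,T)$; subtracting this from the perturbed identity gives $\Delta_T(\tilde{\bf{x}}_0)=\bf{0}$, the constant vector being forced to vanish since the two sides agree on the nonempty interval $(0,T)$. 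This is exactly~\eqref{eq:i_thm_3_statement}.

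I do not expect a genuine obstacle: the statement is essentially a bookkeeping corollary of Theorem~\ref{thm:2_bvp_thm_1} and Lemma~\ref{lemma:pert_ivp_bcs}. The only point requiring a word of care is the meaning of ``solution of BVP~\eqref{eq:bvp}'': passing between the Hilfer differential equation and its integral reformulation uses the equivalence of Theorem~\ref{thm:ex_un_ivp}, for which one wants $\bf{f}\in C^{\beta(1-\alpha)}_{1-\gamma}[0,T]$ as in statement~\ref{item:bvp_thm_limit_function_int_eq}; alternatively, reading ``solution'' in the sense of the equivalent integral equation makes the argument purely algebraic and needs no extra regularity. I would state this convention at the outset of the proof and then carry out the two-line argument above.
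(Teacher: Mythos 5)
Your proof is correct and takes essentially the same route as the paper's: both identify the perturbation in the perturbed IVP as $\Delta_T(\tilde{\bf{x}}_0)$ via Lemma~\ref{lemma:pert_ivp_bcs} and Theorem~\ref{thm:2_bvp_thm_1}, note that $\xvarlim$ already satisfies the fractional-periodic boundary conditions regardless of $\tilde{\bf{x}}_0$, and conclude that solving the original BVP is equivalent to the constant perturbation vanishing. Your write-up is in fact more explicit than the paper's (which compresses both directions into one sentence), particularly in spelling out the ``only if'' direction and in flagging the $\bf{f}\in C^{\beta(1-\alpha)}_{1-\gamma}[0,T]$ regularity needed to pass between the integral equation and the Hilfer differential equation.
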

\begin{proof}
The result follows directly from the fact that $\nu = \Delta_T(\xnull)$ in the perturbed problem since the condition of Lemma \ref{lemma:pert_ivp_bcs} holds as $\xvarlim(t, \xnull)$ satisfies the boundary conditions by Theorem \ref{thm:2_bvp_thm_1}.
Hence, we both have $\hilfder{0}{t}{\alpha, \beta} \bf{x}(t) = \bf{f}(t, \bf{x}(t))$ for $0<t<T$ and $\rlint{0}{0}{1-\gamma}\bf{x}(0)
=
\rlint{0}{T}{1-\gamma}\bf{x}(T)$, satisfying BVP \eqref{eq:bvp}.
\end{proof}
\begin{remark}
    \label{rem:root_finding}
    In practice, solving $\Delta_T(\xnull) = \bf{0}$ for $\xnull$ or $T$ can be achieved numerically.
    A constrained approach such as a grid search can be chosen to guarantee convergence of approximations within the conditions outlined by the assumptions of Theorem \ref{thm:2_bvp_thm_1}. 
    We will present a numerical example in Section \ref{sec:num_res_nonlin}.
\end{remark}
\section{Numerical splines approximations}
\label{sec:num_approx}
This section establishes a Bernstein splines solutions approach, approximating the sequence \eqref{eq:i_approximation_sequence} with a finite-dimensional splines basis, building upon the work in \cite{Goedegebure2025splinespreprint}.
First, we introduce the following notation for the $1-\gamma$ weighed norm on a domain starting from $\eps>0$:
\begin{gather}
    C_{1-\gamma}[\eps, T]
    =
    \{  f: (\eps, T] \to \mathbb{R} : t^{1-\gamma} f \in C[\eps,T] \},
    \\
    \epsweighnorm{f}
    =
    \sup_{t\in(\eps,T]}
    |
    t^{1-\gamma}
    f(t)
    |.
    \label{eq:shifted_norms}
\end{gather}
\begin{proposition}[Shifted weighed continuous functions are continuous, \cite{Goedegebure2025splinespreprint}]
\label{prop:eps_norms_eq}
    For \allowbreak\mbox{$0<\eps<t\leq T$}, it holds that
    \begin{align}
        C_{1-\gamma}[\eps, T]
        =
        C[\eps, T].
    \end{align}
\end{proposition}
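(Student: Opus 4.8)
The plan is to exploit the fact that, on a time interval bounded away from the origin, the weight $t\mapsto t^{1-\gamma}$ is continuous and pinched between two strictly positive constants, so that multiplication by $t^{1-\gamma}$ and by its reciprocal $t^{\gamma-1}$ are mutually inverse, continuity-preserving operations. This is precisely the property that breaks down at $t=0$ (where $t^{1-\gamma}$ degenerates to $0$ whenever $\gamma<1$), which is why $C_{1-\gamma}[0,T]$ is strictly larger than $C[0,T]$ in general, but the two coincide once the left endpoint is moved to $\eps>0$.

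First I would record the elementary facts about the weight: since $\gamma=\alpha+\beta-\alpha\beta\in(0,1]$ we have $1-\gamma\in[0,1)$, so $w(t):=t^{1-\gamma}$ and $t\mapsto t^{\gamma-1}$ are both continuous on the compact interval $[\eps,T]$ (using $\eps>0$ for the latter), and $0<\eps^{1-\gamma}\le w(t)\le T^{1-\gamma}$ for every $t\in[\eps,T]$ (the bounds being trivial when $\gamma=1$). Then I would prove the two inclusions. For $C_{1-\gamma}[\eps,T]\subseteq C[\eps,T]$: given $f$ in the left-hand space, the function $g:=\bigl(t\mapsto t^{1-\gamma}f(t)\bigr)$ lies in $C[\eps,T]$ by definition; a priori $f$ is only defined on $(\eps,T]$, but since $w$ extends continuously and without zeros to $t=\eps$, the identity $f(t)=t^{\gamma-1}g(t)$ shows $f$ admits the (unique) continuous extension $f(\eps):=\eps^{\gamma-1}g(\eps)$, and under this identification $f$ is a product of continuous functions on $[\eps,T]$, hence $f\in C[\eps,T]$. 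For the reverse inclusion: if $f\in C[\eps,T]$ then $t^{1-\gamma}f(t)$ is again a product of continuous functions on $[\eps,T]$, so it lies in $C[\eps,T]$, i.e.\ $f\in C_{1-\gamma}[\eps,T]$.

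The main point requiring care — there is no genuinely hard step here — is the domain mismatch between the defining set $(\eps,T]$ of $C_{1-\gamma}[\eps,T]$ and the closed interval $[\eps,T]$ underlying $C[\eps,T]$; this is resolved exactly by the two-sided positivity bound on $w$ near $\eps$, which forces every element of $C_{1-\gamma}[\eps,T]$ to extend continuously to the endpoint. If one wishes to strengthen the conclusion, the same bound $\eps^{1-\gamma}\le w(t)\le T^{1-\gamma}$ immediately yields that the norms $\epsweighnorm{\cdot}$ and $\sup_{t\in[\eps,T]}|\cdot|$ are equivalent, so the equality is in fact topological; only the set-level statement is needed in what follows, so I would present the norm-equivalence remark as optional.
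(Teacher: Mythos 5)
Your proof is correct and is essentially the standard argument this paper (which cites the result to the splines preprint rather than reproving it) relies on: on $[\eps,T]$ the weight $t^{1-\gamma}$ is continuous and bounded between the positive constants $\eps^{1-\gamma}$ and $T^{1-\gamma}$, so multiplication by $t^{1-\gamma}$ and $t^{\gamma-1}$ preserve continuity and force the continuous extension to the endpoint $t=\eps$. Your handling of the domain mismatch $(\eps,T]$ versus $[\eps,T]$ and the optional norm-equivalence remark are both accurate and in line with how the result is used later.
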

\noindent Now, consider an $\eps$-shifted iteration map similar to \eqref{eq:conmap}:
\begin{align}
    \label{eq:epsconmap}
    \epsconmapeval{\yvar} = 
    \rlint{\eps}{t}{\alpha}
    \yvar(t)
    -
    \fr{\Gamma(\zeta+1)t^\alpha}{\Gamma(\alpha+1)T^\zeta}
    \rlint{\eps}{T}{\zeta}
    \yvar(T),
    \quad \eps < t \leq T,
    \quad
    \zeta = 1-\gamma+\alpha.
\end{align}

\begin{lemma}[$\eps$-shifted iteration map bounds]
    \label{lemma:eps_map_general_bound}
    For $\yvar \in C_{1-\gamma}[\eps, T]$, the map $\epsconmap{}$ as defined in \eqref{eq:epsconmap}
    is bounded, such that
    \begin{align}
        \epsweighnorm{\epsconmap{\yvar}}
        \leq 
        \Xi \epsweighnorm{\yvar},
    \end{align}
    where $\Xi$ is defined in Lemma \ref{lemma:2_bvp_lemma_1}.
\end{lemma}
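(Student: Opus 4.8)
The plan is to transcribe the proof of Lemma~\ref{lemma:2_bvp_lemma_1} almost verbatim, the only genuinely new ingredient being the trivial observation that replacing the lower integration limit $0$ by $\eps>0$ can only shrink the nonnegative integrals that appear. First I would decompose $\epsconmap{}$ exactly as $\conmap{}$ was decomposed in Lemma~\ref{lemma:2_bvp_lemma_1}. Using additivity of the Riemann--Liouville kernel of order $\zeta$ over $[\eps,T]=[\eps,t]\cup[t,T]$, I write $\epsconmapeval{\yvar}=\mathcal{I}^{\eps}\yvar(t)-\mathcal{J}^{\eps}\yvar(t)$ with
\[
\mathcal{I}^{\eps}\yvar(t)=\rlint{\eps}{t}{\alpha}\yvar(t)-\fr{\zeta\,t^\alpha}{\alpha\,\Gamma(\alpha)T^\zeta}\int_{\eps}^{t}(T-s)^{\zeta-1}\yvar(s)\dto s,
\]
and $\mathcal{J}^{\eps}\yvar(t)=\fr{\zeta\,t^\alpha}{\alpha\,\Gamma(\alpha)T^\zeta}\int_t^T(T-s)^{\zeta-1}\yvar(s)\dto s$, using $\Gamma(z+1)=z\Gamma(z)$ exactly as before. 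Then, after substituting $\zeta=1-\gamma+\alpha$, I split $\mathcal{I}^{\eps}=\mathcal{I}_1^{\eps}-\mathcal{I}_2^{\eps}$ with the same integrands as the unshifted $\mathcal{I}_1,\mathcal{I}_2$, only now integrated over $[\eps,t]$ rather than $[0,t]$.

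The core step is to bound each piece by Hölder's inequality together with the pointwise positivity estimate \eqref{eq:i_positivity}, which holds for every $s\in[\eps,t]\subseteq[0,t]$ since its derivation only uses $0\le s\le T$. This gives $|t^{1-\gamma}\mathcal{I}_1^{\eps}\yvar(t)|\le\tfrac{t^{1-\gamma}}{\Gamma(\alpha)}\,\epsweighnorm{\yvar}\int_{\eps}^{t}\bigl[(t-s)^{\alpha-1}-\tfrac{t^\alpha}{T^\zeta}(T-s)^{\zeta-1}\bigr]s^{\gamma-1}\dto s$ and an analogous bound for $\mathcal{I}_2^{\eps}$ with the nonnegative kernel $(T-s)^{\zeta-1}s^{\gamma-1}$. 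Since both integrands are nonnegative and $[\eps,t]\subseteq[0,t]$, each integral is dominated by the corresponding integral over $[0,t]$, which is precisely the quantity evaluated in Lemma~\ref{lemma:2_bvp_lemma_1} via Propositions~\ref{prop:int_pol_full} and \ref{prop:int_pol_inc_0_t}. The tail term $\mathcal{J}^{\eps}$ is literally identical to $\mathcal{J}$, since $\eps<t$ leaves $\int_t^T$ untouched, so Proposition~\ref{prop:int_pol_inc_t_T} yields the same bound as before. Combining the three estimates by the triangle inequality gives $|t^{1-\gamma}\epsconmapeval{\yvar}|\le\xi(t)\,\epsweighnorm{\yvar}$ for every $t\in(\eps,T]$, with the very same $\xi$ as in Lemma~\ref{lemma:2_bvp_lemma_1}.

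Finally I would take the supremum over $t\in(\eps,T]$; since $(\eps,T]\subseteq(0,T]$ this yields $\epsweighnorm{\epsconmap{\yvar}}\le\sup_{t\in(\eps,T]}\xi(t)\,\epsweighnorm{\yvar}\le\sup_{t\in(0,T]}\xi(t)\,\epsweighnorm{\yvar}=\Xi\,\epsweighnorm{\yvar}$, which is the claim. I do not anticipate a real obstacle: the argument is a line-by-line copy of Lemma~\ref{lemma:2_bvp_lemma_1}, and the one thing to check carefully is that \emph{every} integrand produced after applying Hölder's inequality is nonnegative on $[\eps,t]$ --- exactly what \eqref{eq:i_positivity} and the lines following it establish --- so that restricting the integration domain to $[\eps,t]$ genuinely decreases each bound rather than introducing extra terms.
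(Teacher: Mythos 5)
Your argument is correct and is in substance the same as the paper's: both rest on the observation that shifting the lower integration limit from $0$ to $\eps$ only removes nonnegative contributions from the integrals bounded in Lemma~\ref{lemma:2_bvp_lemma_1}, so the same $\xi(t)$ and hence $\Xi$ survive. The paper merely compresses your line-by-line redo into one step by writing $\rlint{\eps}{t}{\alpha}\yvar = \rlint{0}{t}{\alpha}\One_{[\eps,T]}\yvar$ and applying the Hölder estimate of Lemma~\ref{lemma:2_bvp_lemma_1} to $\One_{[\eps,T]}\yvar$, whose weighted sup norm on $(0,T]$ equals $\epsweighnorm{\yvar}$.
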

\begin{proof}
    The result follows from observing that $\rlint{\eps}{t}{\alpha} \yvar(t) = \rlint{0}{t}{\alpha} \One_{[\eps, T]}\yvar(t)$ and applying Hölder's inequality as in Lemma \ref{lemma:2_bvp_lemma_1} to $\One_{[\eps, T]}\yvar(t)$.
\end{proof}
\begin{lemma}[$\eps$-shifted iteration map difference]
    \label{lemma:eps_map}
    The map $\epsconmap$ as defined in \eqref{eq:epsconmap} satisfies 
    \begin{align}
        \epsweighnorm{
        \epsconmap{\yvar}
        -
        \conmap{\yvar}
        }
        \leq 
        \Theta_\eps
        \weighnorm{\yvar},
    \end{align}
    where $\conmap{}$ is given in \eqref{eq:conmap} and 
    \begin{align}
        \Theta_\eps = \fr{\eps^{\alpha}}{\Gamma(\alpha)}\mathrm{B}(\gamma, \alpha)
        +
        \fr{\zeta T^\alpha}{\Gamma(\alpha+1) }
        \mathrm{B}_{\eps/T}(\gamma,\zeta).
    \end{align}
    Furthermore, $\Theta_\eps = \mathcal{O}(\eps^\alpha)$ for $\eps \downarrow 0$.
\end{lemma}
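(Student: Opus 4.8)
The plan is to put $\epsconmap{\yvar} - \conmap{\yvar}$ in closed form, estimate the two resulting terms separately by the same Hölder-type argument used in Lemma~\ref{lemma:2_bvp_lemma_1}, and then bound their $t$-dependence uniformly over $(\eps,T]$. Throughout I take $\yvar \in C_{1-\gamma}[0,T]$ (so that both $\conmap{\yvar}$ and $\epsconmap{\yvar}$ make sense) and use $|\yvar(s)| \leq \weighnorm{\yvar}\,s^{\gamma-1}$ for $s\in(0,T]$. Splitting the $\alpha$- and $\zeta$-fractional integrals of \eqref{eq:conmap} and \eqref{eq:epsconmap} at $s=\eps$ gives, for every $\eps < t \leq T$,
\begin{align}
    \epsconmapeval{\yvar} - \conmapeval{\yvar}
    = -\,\rlint{0}{\eps}{\alpha}\yvar(t)
    + \fr{\Gamma(\zeta+1)\,t^\alpha}{\Gamma(\alpha+1)\,T^\zeta}\,\rlint{0}{\eps}{\zeta}\yvar(T),
\end{align}
so it is enough to bound $t^{1-\gamma}$ times the modulus of each of these two terms.

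For the first term, Hölder's inequality gives $|t^{1-\gamma}\rlint{0}{\eps}{\alpha}\yvar(t)| \leq \weighnorm{\yvar}\,g(t)$ with $g(t) := t^{1-\gamma}\rlint{0}{\eps}{\alpha}\{s^{\gamma-1}\}(t)$, which by Proposition~\ref{prop:int_pol_inc_0_t} equals $\tfrac{t^\alpha}{\Gamma(\alpha)}\mathrm{B}_{\eps/t}(\gamma,\alpha)$. The crucial step is to replace this incomplete beta function by the complete one, i.e.\ to show $g(t) \leq \tfrac{\eps^\alpha}{\Gamma(\alpha)}\mathrm{B}(\gamma,\alpha)$ for all $t\in(\eps,T]$. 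I would prove the pointwise estimate that, for fixed $s\in(0,\eps)$, the map $t\mapsto t^{1-\gamma}(t-s)^{\alpha-1}$ is nonincreasing on $[\eps,\infty)$: its logarithmic derivative $\tfrac{1-\gamma}{t} + \tfrac{\alpha-1}{t-s}$ has the sign of $(\alpha-\gamma)t - (1-\gamma)s$, which is $\leq 0$ since $\gamma = \alpha+\beta(1-\alpha)\geq\alpha$ and $\gamma\leq 1$. Hence $t^{1-\gamma}(t-s)^{\alpha-1}\leq\eps^{1-\gamma}(\eps-s)^{\alpha-1}$ for $t\geq\eps$; integrating against $s^{\gamma-1}\dto s$ over $(0,\eps)$ and using Proposition~\ref{prop:int_pol_full} yields $g(t)\leq\eps^{1-\gamma}\rlint{0}{\eps}{\alpha}\{s^{\gamma-1}\}(\eps) = \tfrac{\eps^\alpha}{\Gamma(\alpha)}\mathrm{B}(\gamma,\alpha)$.

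For the second term, the same Hölder step and Proposition~\ref{prop:int_pol_inc_0_t} bound its weighted modulus by $\tfrac{\Gamma(\zeta+1)\,t^{1-\gamma+\alpha}}{\Gamma(\alpha+1)\,T^\zeta}\cdot\tfrac{T^{\zeta+\gamma-1}}{\Gamma(\zeta)}\mathrm{B}_{\eps/T}(\gamma,\zeta)\,\weighnorm{\yvar}$, and the prefactor collapses to $\tfrac{\zeta T^\alpha}{\Gamma(\alpha+1)}\mathrm{B}_{\eps/T}(\gamma,\zeta)$ using $t^{1-\gamma+\alpha} = t^\zeta \leq T^\zeta$ (valid as $\zeta = 1-\gamma+\alpha > 0$), $\Gamma(\zeta+1) = \zeta\Gamma(\zeta)$, and $\zeta+\gamma-1 = \alpha$. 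Adding the two bounds by the triangle inequality and taking $\sup_{t\in(\eps,T]}$ gives $\epsweighnorm{\epsconmap{\yvar} - \conmap{\yvar}} \leq \Theta_\eps\weighnorm{\yvar}$ with $\Theta_\eps$ exactly as stated. For the asymptotics, the first summand of $\Theta_\eps$ is a fixed constant times $\eps^\alpha$; for the second, bounding $(1-\vartheta)^{\zeta-1}$ by a constant on $[0,\tfrac12]$ gives $\mathrm{B}_{\eps/T}(\gamma,\zeta)\leq C\int_0^{\eps/T}\vartheta^{\gamma-1}\dto\vartheta = \mathcal{O}(\eps^\gamma)$ as $\eps\downarrow 0$, and $\gamma\geq\alpha$ forces $\eps^\gamma = \mathcal{O}(\eps^\alpha)$, hence $\Theta_\eps = \mathcal{O}(\eps^\alpha)$.

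The only genuinely nontrivial step is the monotonicity in the second paragraph — the inequality $t^{1-\gamma}(t-s)^{\alpha-1}\leq\eps^{1-\gamma}(\eps-s)^{\alpha-1}$ — which yields the clean complete beta function in $\Theta_\eps$ and relies essentially on $\gamma\geq\alpha$ (that is, $\beta\geq 0$); a minor point is that the supremum in $\epsweighnorm{\cdot}$ is over the half-open interval $(\eps,T]$, but since the pointwise bound is uniform there no separate limit at $t\downarrow\eps$ is needed. Everything else is routine bookkeeping with the monomial fractional-integral identities of Propositions~\ref{prop:int_pol_full} and \ref{prop:int_pol_inc_0_t}.
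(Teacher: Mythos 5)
Your proposal is correct and follows essentially the same route as the paper: split both fractional integrals at $s=\eps$, apply the Hölder-type estimate with the monomial identities of Propositions \ref{prop:int_pol_full} and \ref{prop:int_pol_inc_0_t}, show the first weighted term is monotone in $t$ so its supremum is attained at $t=\eps$, bound the second term via $t^\zeta\leq T^\zeta$, and get the $\mathcal{O}(\eps^\alpha)$ rate from $\mathrm{B}_{\eps/T}(\gamma,\zeta)=\mathcal{O}(\eps^\gamma)$ with $\gamma\geq\alpha$. Your kernel-level monotonicity argument via the logarithmic derivative, making explicit that the sign of $(\alpha-\gamma)t-(1-\gamma)s$ rests on $\gamma\geq\alpha$, is in fact a slightly more careful rendering of the paper's differentiation-under-the-integral step.
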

\begin{proof}
    Writing out $\epsconmap{}$ and using Hölder's inequality gives
    \begin{align}
        t^{1-\gamma}
        |
        \epsconmap{\yvar}(t)
        -
        \conmap{\yvar}(t)
        |
        &=
        t^{1-\gamma}
        \left|
        \rlint{0}{\eps}{\alpha}
        \yvar(t)
        -
        \fr{\Gamma(\zeta+1)t^\alpha}{\Gamma(\alpha+1)T^\zeta}
        \rlint{0}{\eps}{\zeta}
        \yvar(T)
        \right|
        \\
        &\leq
        \left(
        t^{1-\gamma}
        \rlint{0}{\eps}{\alpha}
        t^{\gamma-1}
        +
        \fr{\Gamma(\zeta+1)t^\zeta}{\Gamma(\alpha+1)T^\zeta}
        \rlint{0}{\eps}{\zeta}T^{\gamma-1}
        \right)
        \weighnorm{\yvar}.
    \end{align}
    By Proposition \ref{prop:int_pol_inc_0_t}, we have
    \begin{align}
        t^{1-\gamma}
        \rlint{0}{\eps}{\alpha}
        t^{\gamma-1}
        =
        \fr{t^{\alpha}}{\Gamma(\alpha)}\mathrm{B}_{\eps/t}(\gamma, \alpha),
    \end{align}
    and
    \begin{align}
        \fr{\Gamma(\zeta+1)t^\zeta}{\Gamma(\alpha+1)T^\zeta}
        \rlint{0}{\eps}{\zeta}T^{\gamma-1}
        =
        \fr{\zeta t^\zeta T^\alpha}{\Gamma(\alpha+1)T^\zeta}
        \mathrm{B}_{\eps/T}(\gamma,\zeta).
    \end{align}
    Combining both relations then gives rise to the following inequality:
    \begin{align}
        \label{eq:eps_diff_t}
        t^{1-\gamma}
        |
        \epsconmap{\yvar}(t)
        -
        \conmap{\yvar}(t)
        |
        \leq 
        \left(
        \fr{t^{\alpha}}{\Gamma(\alpha)}\mathrm{B}_{\eps/t}(\gamma, \alpha)
        +
        \fr{\zeta t^\zeta T^\alpha}{\Gamma(\alpha+1)T^\zeta}
        \mathrm{B}_{\eps/T}(\gamma,\zeta)
        \right)
        \weighnorm{\yvar}.
    \end{align}
    To obtain an upper bound to this expression independent on $t$, we start with the first term of the sum of \eqref{eq:eps_diff_t}.
    Note that 
    \begin{align}
        t^{1-\gamma}
        \rlint{0}{\eps}{\alpha}
        t^{\gamma-1}
        =
        \fr{1}{\Gamma(\alpha)}
        \int_0^\eps 
        t^{1-\gamma} (t-s)^{\alpha-1}\dto s.
    \end{align}
    Using the product rule yields
    \begin{align}
        \der{}{t}
        \left(
        t^{1-\gamma}
        \rlint{0}{\eps}{\alpha}
        t^{\gamma-1}
        \right)
        =
        \fr{1}{\Gamma(\alpha)}
        \int_0^\eps 
        (\gamma-1)t^{-\gamma} (t-s)^{\alpha-1}
        +
        t^{1-\gamma}(\alpha-1)(t-s)^{\alpha-2}
        \dto s.
    \end{align}
    Since $0\leq s\leq \eps < t$ and furthermore $0 < \alpha < 1$ and $0< \gamma \leq 1$, this is an integral of a negative function and thus $\der{}{t}
        \left(
        t^{1-\gamma}
        \rlint{0}{\eps}{\alpha}
        t^{\gamma-1}
        \right) < 0$. 
    Hence, we know $t^{1-\gamma}
        \rlint{0}{\eps}{\alpha}
        t^{\gamma-1}
        =
        \fr{t^{\alpha}}{\Gamma(\alpha)}\mathrm{B}_{\eps/t}(\gamma, \alpha)$ is decreasing and must attain its supremum for $t = \eps$.
    
    For the second expression in the sum of \eqref{eq:eps_diff_t}, since $\zeta = 1 -\gamma + \alpha$ satisfies $0<\zeta \leq 1$, we have 
    $
        \fr{\zeta t^\zeta T^\alpha}{\Gamma(\alpha+1)T^\zeta}
        \mathrm{B}_{\eps/T}(\gamma,\zeta)
        \leq
        \fr{\zeta T^\alpha}{\Gamma(\alpha+1)}
        \mathrm{B}_{\eps/T}(\gamma,\zeta).
    $
    Hence, in total, taking the supremum over $t$ for \eqref{eq:eps_diff_t} gives
    \begin{align}
        \label{eq:bound_results_t_eps}
        \epsweighnorm{
        \epsconmap{\yvar}
        -
        \conmap{\yvar}
        }
        \leq 
        \left(
        \fr{\eps^{\alpha}}{\Gamma(\alpha)}\mathrm{B}(\gamma, \alpha)
        +
        \fr{\zeta T^\alpha}{\Gamma(\alpha+1) }
        \mathrm{B}_{\eps/T}(\gamma,\zeta)
        \right)
        \weighnorm{\yvar},
    \end{align}
    satisfying the bound.
    Finally, for the convergence order as $\eps \downarrow 0$, it is clear the first summation term of \eqref{eq:bound_results_t_eps} has order $\mathcal{O}(\eps^\alpha)$.
    For the second term of the bound, write:
    \begin{align}
        \mathrm{B}_{\eps/T}(\gamma,\zeta)
        &=
        \int_0^{\eps / T}
        s^{\gamma-1}(1-s)^{\zeta-1}
        \dto s
        \\
        &\leq 
        \left(1-
        \fr{\eps}{T}
        \right)^{\zeta-1}
        \int_0^{\eps / T}
        s^{\gamma-1}
        \dto s
        \\
        &=
        \left(1-
        \fr{\eps}{T}
        \right)^{\zeta-1}
        \fr{\eps^\gamma}{\gamma T^\gamma }
        \\
        &=
        \left(
        1
        +
        \mathcal{O}(\eps)
        \right)
        \fr{\eps^\gamma}{\gamma T^\gamma }
        =
        \mathcal{O}(\eps^\gamma).
    \end{align}
    Combining this result with the convergence order of the first term and noting that $\mathcal{O}(\eps^\alpha)$ dominates since $\alpha \leq \gamma \leq 1$, we have $\Theta_\eps = \mathcal{O}(\eps^\alpha)$, proving the final claim.
\end{proof}
\begin{lemma}[$\eps$-shifted iteration map modulus of continuity]
    \label{lemma:moc}
    The map $\epsconmap$ of \eqref{eq:epsconmap} satisfies
    \begin{align}
        |
        \tacc^{1-\gamma}\epsconmap \yvar (\tacc)
        -
        t^{1-\gamma}\epsconmap \yvar (t)
        |
        \leq 
        \Omega(t, t')
        \epsweighnorm{\yvar},
        \quad 
        \eps\leq t<\tacc\leq T,
    \end{align}
    with 
    \begin{align}
        \label{eq:Omega}
        \Omega(t, t')
        = 
        \fr{(\tacc - t)^\alpha}{\Gamma(\alpha+1)}
        \left(
        {\alpha \mathrm{B}(\gamma, \alpha)}
        +
        2
        \left(\fr{\tacc}{t}\right)^{1-\gamma}
        \right)
        +
        \fr{
        (
        \tacc
        -t
        )^\zeta
        T^{\alpha}
        }{\Gamma(\alpha+1)T^\zeta}
        \zeta \mathrm{B}(\gamma, \zeta).
    \end{align}
    Furthermore, $\Omega(t, \tacc) = \mathcal{O}\left((\tacc - t)^\alpha\right)$ for $(\tacc - t) \downarrow 0$.
\end{lemma}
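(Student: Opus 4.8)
\emph{Proof idea.} The plan is to estimate separately the increments of the two constituents of $\epsconmap\yvar$. Multiplying $\epsconmapeval{\yvar}$ by the weight $t^{1-\gamma}$ and using $t^{1-\gamma}t^{\alpha}=t^{\zeta}$, write $t^{1-\gamma}\epsconmap\yvar(t)=g(t)-\fr{\Gamma(\zeta+1)t^{\zeta}}{\Gamma(\alpha+1)T^{\zeta}}\rlint{\eps}{T}{\zeta}\yvar(T)$ with the \emph{weighted Riemann--Liouville part} $g(t):=t^{1-\gamma}\rlint{\eps}{t}{\alpha}\yvar(t)$; it then suffices to bound $|g(\tacc)-g(t)|$ and the increment of the boundary--correction term by the respective summands of $\Omega$. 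Throughout I would use only the pointwise majorization $|\yvar(s)|\le s^{\gamma-1}\epsweighnorm{\yvar}$ on $(\eps,T]$, subadditivity of $x\mapsto x^{\rho}$ for $\rho\in(0,1]$, and the fact that $x\mapsto x^{\gamma-1}$ is nonincreasing (since $\gamma\le 1$).

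The boundary--correction term is handled immediately: the absolute value of its increment is $\fr{\Gamma(\zeta+1)(\tacc^{\zeta}-t^{\zeta})}{\Gamma(\alpha+1)T^{\zeta}}\bigl|\rlint{\eps}{T}{\zeta}\yvar(T)\bigr|$; since $0<\zeta\le 1$ we have $\tacc^{\zeta}-t^{\zeta}\le(\tacc-t)^{\zeta}$, and Proposition~\ref{prop:int_pol_inc_t_T} applied to the majorant $s^{\gamma-1}$ gives $\bigl|\rlint{\eps}{T}{\zeta}\yvar(T)\bigr|\le\fr{T^{\alpha}}{\Gamma(\zeta)}\mathrm{B}_{1-\eps/T}(\zeta,\gamma)\,\epsweighnorm{\yvar}\le\fr{T^{\alpha}}{\Gamma(\zeta)}\mathrm{B}(\gamma,\zeta)\,\epsweighnorm{\yvar}$. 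With $\Gamma(\zeta+1)/\Gamma(\zeta)=\zeta$ this is exactly the last summand of $\Omega$.

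The main work lies in the Riemann--Liouville part. I would split the domain at $t$:
\begin{align}
g(\tacc)-g(t)
&=
\fr{1}{\Gamma(\alpha)}\int_{\eps}^{t}\bigl[\tacc^{1-\gamma}(\tacc-s)^{\alpha-1}-t^{1-\gamma}(t-s)^{\alpha-1}\bigr]\yvar(s)\dto s
\\
&\quad+
\fr{\tacc^{1-\gamma}}{\Gamma(\alpha)}\int_{t}^{\tacc}(\tacc-s)^{\alpha-1}\yvar(s)\dto s.
\end{align}
For the tail integral over $[t,\tacc]$, bounding $|\yvar(s)|\le s^{\gamma-1}\epsweighnorm{\yvar}\le t^{\gamma-1}\epsweighnorm{\yvar}$ and integrating the kernel yields $\tfrac{(\tacc/t)^{1-\gamma}(\tacc-t)^{\alpha}}{\Gamma(\alpha+1)}\epsweighnorm{\yvar}$. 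For the integral over $[\eps,t]$ the decisive observation is the pointwise inequality $\tacc^{1-\gamma}(\tacc-s)^{\alpha-1}\le t^{1-\gamma}(t-s)^{\alpha-1}$, valid for all $0\le s<t<\tacc$: it is equivalent to $(\tacc/t)^{1-\gamma}\le\bigl(\tfrac{\tacc-s}{t-s}\bigr)^{1-\alpha}$, and since $s\mapsto\tfrac{\tacc-s}{t-s}$ increases from the value $\tacc/t$ at $s=0$ while $1-\alpha\ge 1-\gamma$ (because $\gamma\ge\alpha$), the right-hand side is $\ge(\tacc/t)^{1-\alpha}\ge(\tacc/t)^{1-\gamma}$. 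Thus the integrand is single-signed, so this integral is in absolute value $\le\fr{1}{\Gamma(\alpha)}\int_{\eps}^{t}\bigl[t^{1-\gamma}(t-s)^{\alpha-1}-\tacc^{1-\gamma}(\tacc-s)^{\alpha-1}\bigr]s^{\gamma-1}\dto s\,\epsweighnorm{\yvar}$; enlarging the range to $[0,t]$ (the integrand stays nonnegative there) and evaluating the two monomial integrals by Propositions~\ref{prop:int_pol_full} and~\ref{prop:int_pol_inc_0_t} turns this into $\fr{1}{\Gamma(\alpha)}\bigl[t^{\alpha}\mathrm{B}(\gamma,\alpha)-\tacc^{\alpha}\mathrm{B}_{t/\tacc}(\gamma,\alpha)\bigr]\epsweighnorm{\yvar}$. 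It then remains to check $t^{\alpha}\mathrm{B}(\gamma,\alpha)-\tacc^{\alpha}\mathrm{B}_{t/\tacc}(\gamma,\alpha)\le(\tacc/t)^{1-\gamma}(\tacc-t)^{\alpha}/\alpha$: drop the nonpositive $-(\tacc^{\alpha}-t^{\alpha})\mathrm{B}_{t/\tacc}(\gamma,\alpha)$, bound $t^{\alpha}\int_{t/\tacc}^{1}\sigma^{\gamma-1}(1-\sigma)^{\alpha-1}\dto\sigma\le t^{\alpha}(t/\tacc)^{\gamma-1}(1-t/\tacc)^{\alpha}/\alpha$, and simplify with $(t/\tacc)^{\alpha+\gamma-1}=(\tacc/t)^{1-\alpha-\gamma}\le(\tacc/t)^{1-\gamma}$. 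Adding both contributions gives $|g(\tacc)-g(t)|\le\tfrac{2(\tacc/t)^{1-\gamma}(\tacc-t)^{\alpha}}{\Gamma(\alpha+1)}\epsweighnorm{\yvar}$, which, together with the boundary bound and $\alpha\mathrm{B}(\gamma,\alpha)\ge 0$, gives the claimed estimate with $\Omega(t,\tacc)$.

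Finally, the asymptotics are immediate: as $(\tacc-t)\downarrow 0$ one has $(\tacc/t)^{1-\gamma}\to 1$ (and it stays $\le(T/\eps)^{1-\gamma}$), while $\zeta=1-\gamma+\alpha\ge\alpha$ forces $(\tacc-t)^{\zeta}=\mathcal{O}\bigl((\tacc-t)^{\alpha}\bigr)$, so $\Omega(t,\tacc)=\mathcal{O}\bigl((\tacc-t)^{\alpha}\bigr)$. I expect the delicate step to be the $[\eps,t]$ kernel-difference estimate: a crude triangle-inequality split of $\tacc^{1-\gamma}(\tacc-s)^{\alpha-1}-t^{1-\gamma}(t-s)^{\alpha-1}$ destroys the sharp $(\tacc-t)^{\alpha}$ scaling, because the weight $t^{1-\gamma}$, the kernel $(t-s)^{\alpha-1}$ and the majorant $s^{\gamma-1}$ are not co-monotone; the argument is rescued precisely by the single-sign property of the weighted kernel difference, which permits an exact evaluation in terms of (incomplete) beta functions before any elementary power inequality is applied.
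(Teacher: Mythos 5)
Your proposal is correct and follows essentially the same route as the paper: the same split into the weighted Riemann--Liouville part (further split at $t$) and the boundary-correction term, the same single-sign observation for the weighted kernel difference (which you prove algebraically rather than by the paper's differentiation argument), the same exact evaluation via $t^{\alpha}\mathrm{B}(\gamma,\alpha)-\tacc^{\alpha}\mathrm{B}_{t/\tacc}(\gamma,\alpha)$, and the same treatment of the $\rlint{\eps}{T}{\zeta}$ term. Your final regrouping (dropping $-(\tacc^{\alpha}-t^{\alpha})\mathrm{B}_{t/\tacc}(\gamma,\alpha)$ instead of the paper's add-and-subtract of $\tacc^{\alpha}\mathrm{B}(\gamma,\alpha)$) even yields a slightly sharper bound without the $\alpha\mathrm{B}(\gamma,\alpha)$ term, which is still dominated by the stated $\Omega$.
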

\begin{proof}
    First, notice that by the construction of $\conmap_\eps{}$,
    \begin{gather}
        |
        \tacc^{1-\gamma}\epsconmap \yvar (\tacc)
        -
        t^{1-\gamma}\epsconmap \yvar (t)
        |
        \\
        \leq
        |
        \tacc^{1-\gamma}\rlint{\eps}{\tacc}{\alpha}\yvar (\tacc)
        -
        t^{1-\gamma}\rlint{\eps}{t}{\alpha}\yvar (t)
        |
        +
        \fr{
        (
        \tacc^{1-\gamma+\alpha}
        -t^{1-\gamma+\alpha}
        )
        \Gamma(\zeta+1)
        }{\Gamma(\alpha+1)T^\zeta}
        |\rlint{\eps}{T}{\zeta}\yvar (T)|
        .
        \label{eq:moc_starting_point}
    \end{gather}
    Using Definition \ref{def:rl_int} and the triangle inequality, we can write the first expression of \eqref{eq:moc_starting_point} as 
    \begin{gather}
        |
        \tacc^{1-\gamma}\rlint{\eps}{\tacc}{\alpha}\yvar (\tacc)
        -
        t^{1-\gamma}\rlint{\eps}{t}{\alpha}\yvar (t)
        |
        \\
        =
        \fr{1}{\Gamma(\alpha)}
        \left|
        \int_\eps^\tacc
        (\tacc -s)^{\alpha-1}\tacc^{1-\gamma}
        \yvar (s)
        \dto s
        -
        \int_\eps^t
        (t -s)^{\alpha-1}t^{1-\gamma}
        \yvar (s)
        \dto s
        \right|
        \\
        \leq
        \fr{1}{\Gamma(\alpha)}
        \biggl|
        \int_\eps^t
        \left[
        (\tacc -s)^{\alpha-1}\tacc^{1-\gamma} 
        -
        (t -s)^{\alpha-1}t^{1-\gamma} 
        \right]
        \yvar (s)
        \dto s
        \biggr|
        \\
        +
        \fr{1}{\Gamma(\alpha)}
        \biggl|
        \int_t^\tacc
        (\tacc -s)^{\alpha-1}\tacc^{1-\gamma} 
        \yvar (s)
        \dto s
        \biggr|.
        \label{eq:moc_main_integral}
    \end{gather}
    Then, by the same strategy as applied in Lemma \ref{lemma:eps_map}, we have that $\der{}{t} (t-s)^{\alpha-1}t^{1-\gamma} < 0$ for all $0\leq s\le t$.
    Hence, we know that $(t-s)^{\alpha-1}t^{1-\gamma} > (\tacc-s)^{\alpha-1}\tacc^{1-\gamma}$ since $\tacc > t$. 
    Thus, again using Hölder's inequality yields
    \begin{gather}
        \fr{1}{\Gamma(\alpha)}
        \biggl|
        \int_\eps^t
        \left[
        (\tacc -s)^{\alpha-1}\tacc^{1-\gamma} 
        -
        (t -s)^{\alpha-1}t^{1-\gamma} 
        \right]
        \yvar (s)
        \dto s
        \biggr|
        \\
        \leq
        \fr{1}{\Gamma(\alpha)}
        \int_\eps^t
        \left[
        (t -s)^{\alpha-1}t^{1-\gamma} 
        -
        (\tacc -s)^{\alpha-1}\tacc^{1-\gamma} 
        \right]
        \biggl|
        \yvar (s)
        \biggr|
        \dto s
        \\
        \leq
        \fr{1}{\Gamma(\alpha)}
        \int_0^t
        \left[
        (t -s)^{\alpha-1}t^{1-\gamma} 
        -
        (\tacc -s)^{\alpha-1}\tacc^{1-\gamma} 
        \right]
        s^{\gamma-1}
        \dto s
        \,
        \epsweighnorm{\yvar}.
    \end{gather}
    Evaluating this expression using Proposition \ref{prop:int_pol_full} and \ref{prop:int_pol_inc_0_t} then gives
    \begin{gather}
    \fr{1}{\Gamma(\alpha)}
        \int_0^t
        \left[
        (t -s)^{\alpha-1}t^{1-\gamma} 
        -
        (\tacc -s)^{\alpha-1}\tacc^{1-\gamma} 
        \right]
        s^{\gamma-1}
        \dto s
        \,
        \epsweighnorm{\yvar}
        \\
        =
        \fr{1}{\Gamma(\alpha)}
        \left|
        t^{\alpha} {\mathrm{B}(\gamma, \alpha)}
        -
        \tacc^{\alpha} {\mathrm{B}_{t/\tacc}(\gamma, \alpha)}
        \right|
        \epsweighnorm{\yvar}
        \\
        =
        \fr{1}{\Gamma(\alpha)}
        \left|
        t^{\alpha} {\mathrm{B}(\gamma, \alpha)}
        -
        \tacc^{\alpha} {\mathrm{B}(\gamma, \alpha)}
        +
        \tacc^{\alpha} {\mathrm{B}(\gamma, \alpha)}
        -
        \tacc^{\alpha} {\mathrm{B}_{t/\tacc}(\gamma, \alpha)}
        \right|
        \epsweighnorm{\yvar}
        \\
        \leq
        \fr{1}{{\Gamma(\alpha)}}
        \left(
        (\tacc - t)^\alpha
        {\mathrm{B}(\gamma, \alpha)}
        +
        \tacc^\alpha
        \int_{t/\tacc}^1
        s^{\gamma-1}(1-s)^{\alpha-1}
        \dto s
        \right)
        \epsweighnorm{\yvar}
        \\
        \leq
        \fr{1}{{\Gamma(\alpha)}}
        \left(
        (\tacc - t)^\alpha
        {\mathrm{B}(\gamma, \alpha)}
        +
        \tacc^\alpha
        (t/\tacc)^{\gamma-1}
        \int_{t/\tacc}^1
        (1-s)^{\alpha-1}
        \dto s
        \right)
        \epsweighnorm{\yvar}
        \\
        =
        \fr{(\tacc - t)^\alpha}{\Gamma(\alpha+1)}
        \left(
        {\alpha \mathrm{B}(\gamma, \alpha)}
        +
        \left(\fr{\tacc}{t}\right)^{1-\gamma}
        \right)
        \epsweighnorm{\yvar}.
    \end{gather}
    For the second {term} of \eqref{eq:moc_main_integral}, we write
    \begin{align}
        \fr{1}{\Gamma(\alpha)}
        \biggl|
        \int_t^\tacc
        (\tacc -s)^{\alpha-1}\tacc^{1-\gamma} 
        \yvar (s)
        \dto s
        \biggr|
        &\leq 
        \fr{1}{\Gamma(\alpha)}
        \int_t^\tacc
        (\tacc -s)^{\alpha-1}\tacc^{1-\gamma} 
        s^{\gamma-1}
        \dto s
        \,
        \epsweighnorm{\yvar}
        \\
        &\leq 
        \left(\fr{\tacc}{t}\right)^{1-\gamma} 
        \fr{1}{\Gamma(\alpha)}
        \int_t^\tacc
        (\tacc -s)^{\alpha-1}
        \dto s
        \,
        \epsweighnorm{\yvar}
        \\
        &=
        \left(\fr{\tacc}{t}\right)^{1-\gamma} 
        \fr{(\tacc -t)^{\alpha}}{\Gamma(\alpha+1)}
        \,
        \epsweighnorm{\yvar}.
    \end{align}
    Combining these {two estimates} gives
    \begin{align}
        |
        \tacc^{1-\gamma}\rlint{\eps}{\tacc}{\alpha}\yvar (\tacc)
        -
        t^{1-\gamma}\rlint{\eps}{t}{\alpha}\yvar (t)
        |
        \leq 
        \fr{(\tacc - t)^\alpha}{\Gamma(\alpha+1)}
        \left(
        {\alpha \mathrm{B}(\gamma, \alpha)}
        +
        2
        \left(\fr{\tacc}{t}\right)^{1-\gamma}
        \right)
        \epsweighnorm{\yvar}.
    \end{align}
    For the second part of our original expression \eqref{eq:moc_starting_point}, it holds that
    \begin{align}
        |\rlint{\eps}{T}{\zeta}\yvar (T)|
        \leq
        \rlint{0}{T}{\zeta}T^{\gamma-1}
        \epsweighnorm{\yvar}
        &=
        \fr{\Gamma(\gamma)}{\Gamma(\zeta + \gamma)}T^{\zeta+\gamma-1}
        \epsweighnorm{\yvar}
        \\
        &=
        \fr{\mathrm{B}(\gamma, \zeta)}{\Gamma(\zeta)}T^{\alpha}
        \epsweighnorm{\yvar}.
    \end{align}
    Thus,
    \begin{align}
        \fr{
        (
        \tacc^{1-\gamma+\alpha}
        -t^{1-\gamma+\alpha}
        )
        \Gamma(\zeta+1)
        }{\Gamma(\alpha+1)T^\zeta}
        |\rlint{\eps}{T}{\zeta}\yvar (T)|
        \leq
        \fr{
        (
        \tacc
        -t
        )^\zeta
        T^{\alpha}
        }{\Gamma(\alpha+1)T^\zeta}
        \zeta
        \mathrm{B}(\gamma, \zeta)
        \epsweighnorm{\yvar},
    \end{align}
    where we use the fact that $\zeta = 1-\gamma+\alpha$ and $\alpha \leq \zeta \leq 1$.
    Hence, in total:
    \begin{gather}
        |
        \tacc^{1-\gamma}\epsconmap \yvar (\tacc)
        -
        t^{1-\gamma}\epsconmap \yvar (t)
        |
        \\
        \leq 
        \left(
        \fr{(\tacc - t)^\alpha}{\Gamma(\alpha+1)}
        \left(
        {\alpha \mathrm{B}(\gamma, \alpha)}
        +
        2
        \left(\fr{\tacc}{t}\right)^{1-\gamma}
        \right)
        +
        \fr{
        (
        \tacc
        -t
        )^\zeta
        T^{\alpha}
        }{\Gamma(\alpha+1)T^\zeta}
        \zeta \mathrm{B}(\gamma, \zeta)
        \right)
        \epsweighnorm{\yvar},
        \label{eq:result_modcont}
    \end{gather}
    satisfying the bound.
    Finally, the convergence order $\Omega(t, \tacc) = \mathcal{O}\left((\tacc - t)^\alpha\right)$ for $(\tacc -t)\downarrow 0$ follows from \eqref{eq:result_modcont} since $(\tacc -t)\downarrow 0$ gives $\left(\fr{\tacc}{t}\right)^{1-\gamma} \to 1$ and furthermore $\alpha \leq \zeta \leq 1$.
\end{proof}
\noindent
For a knot selection $\mathcal{A}$ on $[\eps, T]$ satisfying Definition \ref{def:knot_collection}, we now define the shifted splines iteration map $\splineconmap$ as follows:
\begin{align}
    \label{eq:splineconmap}
    \splineconmapeval{\yvar} 
    = 
    \{
    t^{\gamma-1}
    \spline{q}
    t^{1-\gamma}
    \epsconmap{\yvar}
    \}(t)
    ,
    \quad \eps < t \leq T,
\end{align}
where $\spline{q}$ denotes the Bernstein splines operator as defined in Definition \ref{def:spline_operator}.

\begin{lemma}[Bernstein splines iteration map error]
    \label{lemma:spline_diff}
    For $\yvar \in C_{1-\gamma}[\eps, T]$, 
    the map $\splineconmap$ defined in \eqref{eq:splineconmap}
    satisfies $\splineconmap{} \yvar \in C_{1-\gamma}[\eps, T]$. Furthermore,
    \begin{align}
        \epsweighnorm{
        \splineconmap{\yvar}
        -
        \epsconmap{\yvar}
        }
        \leq 
        \Omega^q_\mathcal{A}
        \epsweighnorm{\yvar},
    \end{align}    
    with
    \begin{align}
    \Omega^q_\mathcal{A} 
    =
    \fr{5}{4}
    \sup_{A_i \in \mathcal{A}}
    \sup_{
    t, \tacc  \in A_i}
    \sup_{
    | \tacc  - t | \leq   h_i / \sqrt{q}}
    \Omega(\tacc , t),
    \end{align}
    where $h_i := t_{i+1}-t_i$ denotes the knot size and $\Omega$ is defined in \eqref{eq:Omega}.
    \end{lemma}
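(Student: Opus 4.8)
The plan is to reduce the estimate to the classical Bernstein error bound of Theorem~\ref{thm:bernstein_pol_error}, applied separately on each knot interval, with the resulting moduli of continuity controlled by Lemma~\ref{lemma:moc}. Set $g(t) := t^{1-\gamma}\epsconmap{\yvar}(t)$; then from the definition \eqref{eq:splineconmap} of $\splineconmap$ we have $t^{1-\gamma}\splineconmapeval{\yvar} = \spline{q}g(t)$ for $\eps < t \leq T$, so that
\begin{align}
    \epsweighnorm{\splineconmap{\yvar} - \epsconmap{\yvar}}
    =
    \sup_{t\in(\eps,T]}
    \bigl| \spline{q}g(t) - g(t) \bigr|.
\end{align}
Before estimating this, I would settle the membership claim: by Lemma~\ref{lemma:moc}, $g$ has modulus of continuity bounded by $\Omega(t,\tacc)\,\epsweighnorm{\yvar}$, and since $\Omega(t,\tacc)=\mathcal{O}\bigl((\tacc-t)^\alpha\bigr)$ uniformly on $[\eps,T]$ --- the only $t$-dependent factor $(\tacc/t)^{1-\gamma}$ being bounded by $(T/\eps)^{1-\gamma}$ --- $g$ is uniformly continuous, hence $g\in C[\eps,T]$. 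Since $\spline{q}$ is a continuous operator \cite{Goedegebure2025splinespreprint}, $\spline{q}g\in C[\eps,T]$; thus $t^{1-\gamma}\splineconmap{\yvar}=\spline{q}g\in C[\eps,T]$, i.e.\ $\splineconmap{\yvar}\in C_{1-\gamma}[\eps,T]$ (cf.\ Proposition~\ref{prop:eps_norms_eq}), which is the first assertion.

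For the error bound, fix a knot interval $A_i\in\mathcal{A}$ with closure $\bar{A}_i=[t_i,t_{i+1}]$ and width $h_i=t_{i+1}-t_i$. By Definition~\ref{def:spline_operator}, for $t\in A_i$ we have $\spline{q}g(t)=B^q\{\One_{\bar{A}_i}g\}(t)$, so applying Theorem~\ref{thm:bernstein_pol_error} to $g$ restricted to $\bar{A}_i$ (with interval length $b-a=h_i$) gives $|g(t)-\spline{q}g(t)|\leq\tfrac54\,\omega(g|_{\bar{A}_i};h_i/\sqrt q)$ for $t\in A_i$. Bounding the modulus of continuity on $\bar{A}_i$ by Lemma~\ref{lemma:moc} applied to each pair $\sigma\leq\tau$ in $\bar{A}_i$ with $\tau-\sigma\leq h_i/\sqrt q$ yields
\begin{align}
    \omega\!\left(g|_{\bar{A}_i};\ \fr{h_i}{\sqrt q}\right)
    \leq
    \Bigl(\ \sup_{\substack{\sigma,\tau\in A_i\\ |\tau-\sigma|\leq h_i/\sqrt q}}\Omega(\tau,\sigma)\ \Bigr)\epsweighnorm{\yvar}.
\end{align}
Taking the supremum over $t\in A_i$ and then over all $A_i\in\mathcal{A}$ --- which together cover $(\eps,T]$ --- and comparing with the definition of $\Omega^q_\mathcal{A}$ closes the argument.

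The proof is largely mechanical; the points deserving a little care are: (i) that $\spline{q}g$ is genuinely continuous across the interior knots $t_i$, so the global supremum over $(\eps,T]$ is recovered from the per-interval estimates and the first assertion holds --- this rests on the endpoint interpolation $B^q h(a)=h(a)$, $B^q h(b)=h(b)$ of the Bernstein operator, which underlies the quoted continuity of $\spline{q}$; and (ii) that the argument of $\omega$ on each piece is the \emph{local} width $h_i$ rather than the global length $T-\eps$, which is precisely what will let $\Omega^q_\mathcal{A}$ be driven to zero by mesh refinement in the convergence analysis to come.
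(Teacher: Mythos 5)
Your proposal is correct and follows essentially the same route as the paper's own proof: reduce the weighted difference to $|\spline{q}g - g|$ with $g(t)=t^{1-\gamma}\epsconmapeval{\yvar}$, apply the Bernstein error bound of Theorem~\ref{thm:bernstein_pol_error} knot-interval by knot-interval, and control each local modulus of continuity via Lemma~\ref{lemma:moc}. Your explicit verification that $g\in C[\eps,T]$ (via the $\Omega$-bound, with $(\tacc/t)^{1-\gamma}\leq(T/\eps)^{1-\gamma}$) is a small point the paper leaves implicit, but it does not change the argument.
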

    \begin{proof}
        Weighed continuity $\splineconmap{} \yvar \in C_{1-\gamma}[\eps, T]$ follows from Proposition \ref{prop:eps_norms_eq}, 
        the continuity of $\spline{q}$
        and the fact that $t^{\gamma-1}$ is continuous and bounded for $\eps<t\leq T$.
        For the bound, we first write
        \begin{align}
            |
            t^{1-\gamma}
            \splineconmap{\yvar}(t)
            -
            t^{1-\gamma}\epsconmap{\yvar}(t)
            |
            =
            |
            \spline{q}
            t^{1-\gamma}
            \epsconmap{\yvar}(t)
            -
            t^{1-\gamma}\epsconmap{\yvar}(t)
            |.
        \end{align}
        For a given $f \in C[\eps, T]$, using the definition of the splines operator, we have
        \begin{align}
            |f(t) - \spline{q} f(t)|
            &=
            \left|
            \sum_{i=0}^k
            \One_{A_i}
            [\One_{\bar{A}_i}f(t)-
            B^q \, \One_{\bar{A}_i}f (t)]
            \right|.
            \end{align}
            Here, $B^q \, \One_{\bar{A}_i}$ denotes the Bernstein polynomial operator of Definition \ref{def:bernstein_operator} applied to $f: \bar{A}_i \to \mathbb{R}$.
            Now, using the triangle inequality and Theorem \ref{thm:bernstein_pol_error} gives
            \begin{align}
            \left|
            \sum_{i=0}^k
            \One_{A_i}
            [\One_{\bar{A}_i}f(t)-
            B^q \, \One_{\bar{A}_i}f (t)]
            \right|
            &\leq
            \sum_{i=0}^k
            \One_{A_i}(t)
            \left|
            \One_{\bar{A}_i}f(t)-
            B^q \, \One_{\bar{A}_i}f (t)
            \right|
            \\
            &\leq
            \sum_{i=0}^k
            \One_{A_i}(t)
            \,
            \fr{5}{4}\,
            \omega
            \left(
            \One_{\bar{A}_i}f, 
            \fr{h_i}{\sqrt{q}}
            \right)
            \\
            &\leq
            \fr{5}{4}
            \sup_{i\in \{0, \dots , k\}}
            \omega
            \left(
            \One_{\bar{A}_i}f,\, 
            \fr{h_i}{\sqrt{q}}
            \right).
        \end{align}
   Choosing $f(t) = t^{1-\gamma}\epsconmap{\yvar}(t)$ and using the result of Lemma \ref{lemma:moc} results in
    \begin{align}
        \sup_{t\in(\eps, T]}
        |
        \spline{q}
        t^{1-\gamma}
        \epsconmap{\yvar}(t)
        -
        t^{1-\gamma}\epsconmap{\yvar}(t)
        |
        &\leq
        \fr{5}{4}
        \sup_{i\in \{0, \dots , k\}}
        \omega
        \left(
        \One_{\bar{A}_i}f,\, 
        \fr{h_i}{\sqrt{q}}
        \right)
        \\
        &\leq
        \fr{5}{4}
        \sup_{A_i \in \mathcal{A}}
        \sup_{
        t, \tacc  \in A_i}
        \sup_{
        | \tacc  - t | \leq   h_i / \sqrt{q}}
        \Omega(\tacc , t),
    \end{align}
    satisfying the claim.
\end{proof}
\begin{theorem}[Convergence of numerical approximations]
    \label{thm:numerical_conv}
    For BVP \eqref{eq:bvp} and its corresponding perturbed IVP \eqref{eq:IVP_pert}, assume the conditions of existence and uniqueness of Theorem~\ref{thm:2_bvp_thm_1} hold.
    Denote $X_D[\eps, T] = \{\bf{u}:(\eps, T] \to \mathbb{R}^d, \, \bf{u} \in C_{1-\gamma}[\eps, T] : t^{1-\gamma} \bf{u}(t) \in D \}$.
    If additionally
    \begin{enumerate}[label=\textbf{A.\arabic*$^*$}]
        \item 
        \label{item:ass_num_domain}
            $
            \left\{
            \bf{u}
            \in C_{1-\gamma}[\eps,T]
            :
            \bigepsweighnorm{
            \bf{u}
            -
            \fr{\xnull t^{\gamma-1}}{\Gamma(\gamma)}
            }
            \leq  (\Xi+\Omega_\mathcal{A}^q) \bf{m}
            \right\}
            \subseteq X_D[\eps,T],
            $
            \\
            where $\Xi$ and $\Omega_\mathcal{A}^q$ are given in Theorem~\ref{thm:2_bvp_thm_1} and Lemma \ref{lemma:spline_diff} respectively,
        \item
        \label{item:ass_num_bdd_lip}
        Boundedness and Lipschitz conditions of \ref{item:bvp_hilfer_bdd_lip} also hold for $\bf{u}, \bf{v} \in X_D[\eps, T]$.
        \item
        \label{item:ass_num_spectral}
        The spectral radius satisfies
            $\rho\left(
            (\Xi+\Omega_\mathcal{A}^q)
            K\right) < 1$,
    \end{enumerate}
    then the sequence $(\xvar^{q,\eps}_m)_{m\geq 0}$ with $\xvar^{q,\eps}_m : [\eps, T] \to \mathbb{R}^d$ defined by
    \begin{equation}
        \begin{aligned}
            \label{eq:splines_iterations}
            \xvar^{q,\eps}_0(t, \xnull)
            &=
            \fr{\xnull}{\Gamma(\gamma)}t^{\gamma-1}
            \\
            \xvar^{q,\eps}_{m+1}(t, \xnull)
            &=
            \xvar^{q,\eps}_0(t, \xnull)
            +
            \{
            \splineconmap \bf{f}(t, \xvar_m^{q,\eps}(t, \xnull))
            \}(t),
        \end{aligned}
    \end{equation}
    converges with $\lim_{m\to \infty} \xvar^{q,\eps}_m =: \xvarlim^{q,\eps} \in C_{1-\gamma}[\eps, T]$.
    Furthermore, the following statements hold:
    \begin{enumerate}[label=\textbf{C.\arabic*$^*$}]
        \item \label{item:splines_it_cont}
        \textbf{Continuity of iterations:} $\xvar^{q,\eps}_m \in C_{1-\gamma}[\eps, T] = C[\eps, T]$ for $m\geq 0$.
        \item \label{item:splines_spline_conv} \textbf{Spline convergence:}
        $\lim_{q \to \infty} \xvar^{q,\eps} = \lim_{h \downarrow 0} \xvar^{q,\eps} = \xvarlim^\eps$ , where $h := \sup_{i\in \{0, \ldots, k\}}h_i$ denotes the largest knot size, $q \in \mathbb{N}$ the spline polynomial order and $\xvarlim^\eps$ is the solution to \eqref{eq:splines_iterations} with $\epsconmap$ of Lemma \ref{lemma:eps_map} substituted for $\splineconmap$.
        Furthermore,
        $
            \epsweighnorm{
            \xvarlim^{q,\eps}
            -
            \xvarlim^{\eps}
            }
            =
                \mathcal{O}\left(
            \left(\fr{h}        {\sqrt{q}}\right)^\alpha\right).
        $
        \item 
        \label{item:splines_eps_conv}
        \textbf{Convergence in $\eps$:}
        $\lim_{\eps \downarrow 0} \xvar^{\eps} = \xvarlim$ , where $\xvarlim \in C^{\beta(1-\alpha)}_{1-\gamma}[0,T]$ is the analytical perturbed-IVP solution for BVP~\eqref{eq:bvp} as obtained in Theorem~\ref{thm:2_bvp_thm_1}.
        Furthermore, 
        $
            \epsweighnorm{
            \xvarlim^{\eps}
            -
            \xvarlim
            }
            =
            \mathcal{O}(\eps^\alpha).
        $
    \end{enumerate}
\end{theorem}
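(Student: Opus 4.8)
The overall strategy is to rerun the Banach fixed-point argument from the proof of Theorem~\ref{thm:2_bvp_thm_1}, but on the complete space $C_{1-\gamma}[\eps,T]$ and with the discretized map $\splineconmap$ playing the role of $\conmap$, and then to reach the analytical solution $\xvarlim$ from the numerical one $\xvarlim^{q,\eps}$ in two successive perturbation estimates. \emph{First, convergence of \eqref{eq:splines_iterations} and statement~\ref{item:splines_it_cont}.} Lemma~\ref{lemma:eps_map_general_bound}, Lemma~\ref{lemma:spline_diff} and the triangle inequality give that $\splineconmap$ is bounded and linear with $\epsweighnorm{\splineconmap{\yvar}}\le(\Xi+\Omega^q_\mathcal{A})\epsweighnorm{\yvar}$. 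An induction on $m$ using \ref{item:ass_num_domain} and the boundedness part of \ref{item:ass_num_bdd_lip} shows $\xvar^{q,\eps}_m\in X_D[\eps,T]$ for all $m$, hence $\epsweighnorm{\bf{f}(\cdot,\xvar^{q,\eps}_m)}\le\bf{m}$; combined with the Lipschitz part of \ref{item:ass_num_bdd_lip} this yields, exactly as in Theorem~\ref{thm:2_bvp_thm_1},
\[
\epsweighnorm{\xvar^{q,\eps}_{m}-\xvar^{q,\eps}_{m-1}}\le(\Xi+\Omega^q_\mathcal{A})\bigl[(\Xi+\Omega^q_\mathcal{A})K\bigr]^{m-1}\bf{m},
\]
and telescoping with $\sum_{k\ge0}[(\Xi+\Omega^q_\mathcal{A})K]^k\le(I-(\Xi+\Omega^q_\mathcal{A})K)^{-1}$, valid by \ref{item:ass_num_spectral}, makes $(\xvar^{q,\eps}_m)_m$ Cauchy in $C_{1-\gamma}[\eps,T]$ with limit $\xvarlim^{q,\eps}$ in the closed set $X_D[\eps,T]$. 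Statement~\ref{item:splines_it_cont} is then immediate: $\xvar^{q,\eps}_0$ is continuous on $[\eps,T]$, $\bf{f}(\cdot,\xvar^{q,\eps}_m)\in C_{1-\gamma}[\eps,T]$ by \ref{item:ass_num_bdd_lip}, $\splineconmap$ maps into $C_{1-\gamma}[\eps,T]$ by Lemma~\ref{lemma:spline_diff}, and $C_{1-\gamma}[\eps,T]=C[\eps,T]$ by Proposition~\ref{prop:eps_norms_eq}.

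For statement~\ref{item:splines_spline_conv}, the identical contraction argument with $\epsconmap$ in place of $\splineconmap$ (contraction matrix $\Xi K$, and $\rho(\Xi K)\le\rho((\Xi+\Omega^q_\mathcal{A})K)<1$ by monotonicity of the spectral radius on nonnegative matrices) yields the fixed point $\xvarlim^{\eps}\in X_D[\eps,T]$ of the un-discretized iteration. Writing $\bf{h}:=\bf{f}(\cdot,\xvarlim^{q,\eps})$, subtracting the two fixed-point identities and inserting $\pm\,\epsconmap{\bf{h}}$ gives
\[
\xvarlim^{q,\eps}-\xvarlim^{\eps}
=\bigl(\splineconmap{\bf{h}}-\epsconmap{\bf{h}}\bigr)
+\epsconmap{\bigl(\bf{f}(\cdot,\xvarlim^{q,\eps})-\bf{f}(\cdot,\xvarlim^{\eps})\bigr)}.
\]
Lemma~\ref{lemma:spline_diff} with $\epsweighnorm{\bf{h}}\le\bf{m}$, Lemma~\ref{lemma:eps_map_general_bound}, and the Lipschitz bound then give $\epsweighnorm{\xvarlim^{q,\eps}-\xvarlim^{\eps}}\le\Omega^q_\mathcal{A}\bf{m}+\Xi K\,\epsweighnorm{\xvarlim^{q,\eps}-\xvarlim^{\eps}}$, hence $\epsweighnorm{\xvarlim^{q,\eps}-\xvarlim^{\eps}}\le(I-\Xi K)^{-1}\Omega^q_\mathcal{A}\bf{m}$. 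Since $t\ge\eps$ bounds the factor $(\tacc/t)^{1-\gamma}$ in \eqref{eq:Omega} by $(T/\eps)^{1-\gamma}$, the estimate $\Omega(t,\tacc)=\mathcal{O}((\tacc-t)^\alpha)$ of Lemma~\ref{lemma:moc} gives $\Omega^q_\mathcal{A}=\mathcal{O}((h/\sqrt q)^\alpha)$ as $h\downarrow0$ or $q\to\infty$, which proves both the limit and the rate.

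For statement~\ref{item:splines_eps_conv}, $\xvarlim$ is the analytical solution of Theorem~\ref{thm:2_bvp_thm_1}, which satisfies $\xvarlim=\xvar^{q,\eps}_0+\conmap{\bf{f}(\cdot,\xvarlim)}$ on $(0,T]$ and lies in $X_D[0,T]$. To compare it with $\xvarlim^{\eps}$ across the differing time domains, I would introduce $\tilde{\bf{g}}$ on $(0,T]$ defined by $\tilde{\bf{g}}=\bf{f}(\cdot,\xvarlim^{\eps})$ on $(\eps,T]$ and $\tilde{\bf{g}}=\bf{f}(\cdot,\xvarlim)$ on $(0,\eps]$; since $\epsconmap$ only reads values on $(\eps,T]$ we have $\epsconmap{\bf{f}(\cdot,\xvarlim^{\eps})}=\epsconmap{\tilde{\bf{g}}}$, so subtracting the fixed-point identities on $(\eps,T]$ and inserting $\pm\,\conmap{\tilde{\bf{g}}}$,
\[
\xvarlim^{\eps}-\xvarlim
=\bigl(\epsconmap{\tilde{\bf{g}}}-\conmap{\tilde{\bf{g}}}\bigr)
+\conmap{\bigl(\tilde{\bf{g}}-\bf{f}(\cdot,\xvarlim)\bigr)}.
\]
Because $\xvarlim^{\eps}\in X_D[\eps,T]$ and $\xvarlim\in X_D[0,T]$ we have $\weighnorm{\tilde{\bf{g}}}\le\bf{m}$, so Lemma~\ref{lemma:eps_map} bounds the first term by $\Theta_\eps\bf{m}$; the difference $\tilde{\bf{g}}-\bf{f}(\cdot,\xvarlim)$ vanishes on $(0,\eps]$, so its $C_{1-\gamma}[0,T]$ and $C_{1-\gamma}[\eps,T]$ norms coincide and it equals $\bf{f}(\cdot,\xvarlim^{\eps})-\bf{f}(\cdot,\xvarlim)$ on $(\eps,T]$, whence Lemma~\ref{lemma:2_bvp_lemma_1} with the Lipschitz bound controls the second term by $\Xi K\,\epsweighnorm{\xvarlim^{\eps}-\xvarlim}$. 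Rearranging gives $\epsweighnorm{\xvarlim^{\eps}-\xvarlim}\le(I-\Xi K)^{-1}\Theta_\eps\bf{m}$, and $\Theta_\eps=\mathcal{O}(\eps^\alpha)$ by Lemma~\ref{lemma:eps_map}; that $\xvarlim$ has the asserted regularity is part of Theorem~\ref{thm:2_bvp_thm_1}.

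The hard part is Step~3: reconciling the two different time intervals so that Lemma~\ref{lemma:eps_map} and Lemma~\ref{lemma:2_bvp_lemma_1} apply without loss (the auxiliary function $\tilde{\bf{g}}$ and the observation that a function vanishing on $(0,\eps]$ has equal $C_{1-\gamma}[0,T]$ and $C_{1-\gamma}[\eps,T]$ norms), together with checking that $\xvar^{q,\eps}_m$, $\xvarlim^{q,\eps}$ and $\xvarlim^{\eps}$ all stay inside the sets $X_D[\eps,T]$ on which the boundedness and Lipschitz estimates of \ref{item:ass_num_bdd_lip} hold, so that the composition bounds $\epsweighnorm{\bf{f}(\cdot,\cdot)}\le\bf{m}$ are available. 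The remaining steps are a line-by-line rerun of the contraction argument already carried out for Theorem~\ref{thm:2_bvp_thm_1}.
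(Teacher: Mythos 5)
Your proposal is correct and takes essentially the same route as the paper: the same $(\Xi+\Omega^q_\mathcal{A})$-contraction rerun of Theorem~\ref{thm:2_bvp_thm_1} on $C_{1-\gamma}[\eps,T]$, followed by the same two insert-and-subtract perturbation estimates bounded via Lemma~\ref{lemma:eps_map_general_bound}, Lemma~\ref{lemma:eps_map} and Lemma~\ref{lemma:spline_diff}, the Lipschitz condition, and inversion by $(I_d-\Xi K)^{-1}$. Your auxiliary function $\tilde{\bf{g}}$ in the $\eps$-step is only a repackaging of the paper's direct splitting $\pm\,\epsconmap{\bf{f}(\cdot,\xvarlim)}$, since $\epsconmap{}-\conmap{}$ reads its argument only on $(0,\eps]$ and $\conmap{}$ applied to a function vanishing there reduces to $\epsconmap{}$, so the two decompositions coincide term by term.
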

\begin{proof}
    First, we prove convergence of the sequence.
    Rewriting and using Theorem~\ref{thm:2_bvp_thm_1}, Lemma~\ref{lemma:eps_map} and Lemma~\ref{lemma:spline_diff} we have
    \begin{align}
        \epsweighnorm{
        \splineconmap \yvar
        }
        &=
        \epsweighnorm{
            \splineconmap \yvar
            -
            \epsconmap \yvar
            +
            \epsconmap \yvar
        }
        \\
        &\leq
        \epsweighnorm{
        \splineconmap \yvar
        -
        \epsconmap \yvar
        }
        +
        \epsweighnorm{
        \epsconmap \yvar
        }
        \\
        &\leq
        \left(
        \Omega_\mathcal{A}^q
        +
        \Xi
        \right)
        \epsweighnorm{\yvar}.
    \end{align}
    First, we observe that $\xvar_0^{q, \eps}(t, \xnull) \in X_D[\eps, T]$.
    Then, combining assumption \ref{item:ass_num_spectral} with the linearity of $\splineconmap{} \yvar \in C_{1-\gamma}[\eps, T]$ as presented in Lemma~\ref{lemma:spline_diff}, we can follow the same line of proof as in Theorem~\ref{thm:2_bvp_thm_1} for assumption \ref{item:ass_num_domain} - \ref{item:ass_num_spectral}.
    This gives convergence of the sequence \eqref{eq:splines_iterations} as $n \to \infty$ with iterations $\xvar_n^{q,\eps}(t, \xnull) \in X_D[\eps, T]$.
    Furthermore, by Proposition \ref{prop:eps_norms_eq}, $C_{1-\gamma}[\eps, T] = C[\eps, T]$, satisfying claim \ref{item:splines_it_cont}.

    For claim \ref{item:splines_spline_conv} and \ref{item:splines_eps_conv}, note that $\epsweighnorm{\epsconmap \yvar} \leq \Xi \epsweighnorm{\yvar}$ with $\rho ( \Xi K) \leq 1$ by Lemma \ref{lemma:eps_map_general_bound}.
    Since furthermore $\Omega_\mathcal{A}^q \geq \bf{0}$ we must also have
    \begin{align}
    \{
    \bf{u}
    \in C_{1-\gamma}[\eps,T]
    :
    \epsweighnorm{
    \bf{u}
    -
    {\xnull t^{\gamma-1}}/{\Gamma(\gamma)}
    }
    \leq  \Xi \bf{m}
    \}
    \subseteq X_D[\eps,T].
    \end{align}
    Then, using the same line of proof of Theorem~\ref{thm:2_bvp_thm_1} we know the sequence
    \begin{equation}
        \begin{aligned}
            \label{eq:eps_iterations}
            \xvar^{\eps}_0(t, \xnull)
            &=
            \fr{\xnull}{\Gamma(\gamma)}t^{\gamma-1}
            \\
            \xvar^{\eps}_{m+1}(t, \xnull)
            &=
            \xvar^{\eps}_0(t, \xnull)
            +
            \{
            \epsconmap \bf{f}(t, \xvar_m^{\eps}(t, \xnull))
            \}(t),
        \end{aligned}
    \end{equation}
    converges as $m \to \infty$ to a solution $\xvarlim^\eps(t, \xnull) \in X_D[\eps, T]$ satisfying the fractional integral equation 
    \begin{align}
    \label{eq:int_eq_eps}
    \xvarlim^{\eps}(t, \xnull)
    =
    \fr{\xnull}{\Gamma(\gamma)}t^{\gamma-1}
    +
    \{
    \epsconmap \bf{f}(t, \xvarlim^{\eps}(t, \xnull))
    \}(t),
    \quad \eps < t \leq T.
    \end{align}
    Using this, to prove claim \ref{item:splines_spline_conv}, we can write
    \begin{align}
        &\epsweighnorm{
        \xvarlim^{q, \eps}(t, \xnull)
        -
        \xvarlim^{\eps}(t, \xnull)
        }
        =
        \epsweighnorm{
        \splineconmap{}
        \bf{f}(t, \xvarlim^{q, \eps}(t, \xnull))
        -
        \epsconmap{}
        \bf{f}(t, \xvarlim^{\eps}(t, \xnull))
        }
        \\
        &=
        \epsweighnorm{
        \{
        \splineconmap{}
        -
        \epsconmap{}
        \}
        \bf{f}(t, \xvarlim^{q, \eps}(t, \xnull))
        -
        \epsconmap{}
        \{
        \bf{f}(t, \xvarlim^{q, \eps}(t, \xnull))
        -
        \bf{f}(t, \xvarlim^{\eps}(t, \xnull))
        \}
        }
        \\
        &\leq
        \Omega^q_\mathcal{A}
        \epsweighnorm{\bf{f}(t, \xvarlim^{q, \eps}(t, \xnull))}
        +
        \Xi K 
        \epsweighnorm{
        \bf{f}(t, \xvarlim^{q, \eps}(t, \xnull))
        -
        \bf{f}(t, \xvarlim^{\eps}(t, \xnull))
        }
        \\
        &\leq 
        \Omega^q_\mathcal{A}
        \bf{m}
        +
        \Xi K
        \epsweighnorm{
        \xvarlim^{q, \eps}(t, \xnull)
        -
        \xvarlim^{\eps}(t, \xnull)
        }.
    \end{align}
    Now, using the fact that $\rho(\Xi K) <1$ we have
    \begin{align}
        \epsweighnorm{
        \xvarlim^{q, \eps}(t, \xnull)
        -
        \xvarlim^{\eps}(t, \xnull)
        }
        \leq 
        (I_d - \Xi K)^{-1}
        \Omega_\mathcal{A}^q \bf{m},
    \end{align}
    where $I_d$ denotes the $d$-dimensional identity matrix.
    Now, since $\mathcal{O}( \Omega_\mathcal{A}^q) = \mathcal{O}\left(
    \left(\fr{h}{\sqrt{q}}\right)^\alpha\right)$ for $\fr{h}{\sqrt{q}} \downarrow 0$ by Lemma \ref{lemma:spline_diff} we have 
    \begin{align}
        \epsweighnorm{
        \xvarlim^{q, \eps}(t, \xnull)
        -
        \xvarlim^{\eps}(t, \xnull)
        }
        =
        \mathcal{O}\left(
        \left(\fr{h}{\sqrt{q}}\right)^\alpha\right).
    \end{align}
    Furthermore, since $q\geq 1$, we also have convergence for $h \downarrow 0$ and $q \to \infty$, satisfying the claim.
    
    For convergence to the analytical solution as $\eps \downarrow 0$ of \ref{item:splines_eps_conv}, the same approach gives
    \begin{align}
        &\epsweighnorm{
        \xvarlim^{\eps}(t, \xnull)
        -
        \xvarlim(t, \xnull)
        }
        =
        \epsweighnorm{
        \epsconmap \bf{f}(t, \xvarlim^{\eps}(t, \xnull))
        -
        \conmap \bf{f}(t, \xvarlim(t, \xnull))
        }
        \\
        &=
        \epsweighnorm{
        \epsconmap \bf{f}(t, \xvarlim^{\eps}(t, \xnull))
        -
        \epsconmap \bf{f}(t, \xvarlim(t, \xnull))
        +
        \epsconmap \bf{f}(t, \xvarlim(t, \xnull))
        -
        \conmap \bf{f}(t, \xvarlim(t, \xnull))
        }
        \\
        &\leq
        \epsweighnorm{
        \epsconmap \{ 
        \bf{f}(t, \xvarlim^{\eps}(t, \xnull))
        -
        \bf{f}(t, \xvarlim(t, \xnull))
        \}
        }
        +
        \epsweighnorm{
        \{ \epsconmap
        -
        \conmap \}
        \bf{f}(t, \xvarlim(t, \xnull))
        }
        \\
        &\leq
        \Xi \epsweighnorm{\bf{f}(t, \xvarlim^{\eps}(t, \xnull))
        -
        \bf{f}(t, \xvarlim(t, \xnull))}
        +
        \Theta_\eps \weighnorm{\bf{f}(t, \xvarlim(t, \xnull))}
        \\
        &\leq
        \Xi K \epsweighnorm{
        \xvarlim^{\eps}(t, \xnull)
        -
        \xvarlim(t, \xnull)
        }
        +
        \Theta_\eps \bf{m}.
    \end{align}

Which, combined with the fact that $\rho( \Xi K)<1$ gives after rewriting
\begin{align}
    \epsweighnorm{
    \xvarlim^{\eps}(t, \xnull))
    -
    \xvarlim(t, \xnull))
    }
    \leq
    (I_d - \Xi K)^{-1}
    \Theta_\eps
    \bf{m}.
\end{align}
Then, since  $\mathcal{O}(\Theta_\eps ) = \mathcal{O}(\eps^\alpha)$ for $\eps \downarrow 0$ as established in Lemma \ref{lemma:eps_map}, we have satisfied the claim.
\end{proof}
\section{Numerical results}
To find numerical solutions corresponding to sequence \eqref{eq:splines_iterations}, we operationalize the splines operator $\spline{q}$ of the previous section using the implementation as described in \cite{Goedegebure2025splinespreprint}.
We apply the convergence results and implementation to two example systems.
\label{sec:num_res}
\subsection{Polynomial example and convergence}
\label{sec:num_res_pol}
Consider the following BVP:
\begin{align}
    \label{eq:bvp_pol_sys}
    \begin{cases}
        \hilfder{0}{t}{\alpha,\beta}x(t)
        =
        t^k
        ,
        \quad &0<t<T, \quad 0 < \alpha < 1, \quad 0 \leq \beta \leq 1,
        \\
        \rlint{0}{0}{1-\gamma}
        x(0)
        =
        \rlint{0}{T}{1-\gamma}
        x(T),
        \quad &\gamma = \alpha + \beta - \alpha \beta.
    \end{cases}
\end{align}
Choosing $\alpha, \beta = 1/2$, $k=0.9$, $\tilde{x}_0 = 1$ and $T=3$, we use Theorem \ref{thm:2_bvp_thm_1} and \ref{thm:numerical_conv} to obtain numerical approximations $x^{q,\eps}(t, \tilde{x}_0) : (\eps, T] \to \mathbb{R}$ to the perturbed IVP \eqref{eq:IVP_pert} corresponding to \eqref{eq:bvp_pol_sys}, satisfying the boundary conditions.
For the convergence assumptions, note that we have $\bf{m} = \sup_{t\in [0, T]} |f(t)| = 3^{0.9}$
Furthermore, since ${f}$ is independent of ${x}$, the Lipschitz constant satisfies $K = 0$.
Hence, we can satisfy Assumption \ref{item:bvp_hilfer_bdd_lip} and \ref{item:ass_num_bdd_lip} independently of $x$, and thus the respective analytical and numerical spaces of convergence $X_{[0,T]}$ and $X_{[\eps,T]}$ are fully determined by the requirement of Assumption \ref{item:bvp_hilfer_domain} and \ref{item:ass_num_domain}. 
Finally, since $K = 0$ we have satisfied spectral radius requirements of Assumption \ref{item:bvp_hilfer_spectral_radius} and \ref{item:ass_num_spectral}.
We thus know a unique perturbed analytical and numerical solution exists for $\tilde{x}_0 = 1$.
For numerical implementation, we take an equidistant knot selection grid and a numerical iteration convergence tolerance of $\mathrm{TOL}= 10^{-12}$ (see \cite{Goedegebure2025splinespreprint}).
Results are presented in Table \ref{tab:conv_pol_h}~-~\ref{tab:conv_pol_eps} and Figure \ref{fig:pol_num_res}.

\begin{table}[h]
\footnotesize
    \centering
    \begin{tabular}{c|c|c|c|c}
$h$ & mean weighted error & sup weighted error & approx. $\Delta_T(\tilde{x}_0)$  & total time (s) \\ \hline
$2^{-0}$ &$ 1.172 \cdot 10^{-1 }$& $3.208 \cdot 10^{-1}$ & $-1.590 \cdot 10^{0}$ &  $6.113 \cdot 10^{-4}$ \\
$2^{-1}$ &$ 3.291 \cdot 10^{-2 }$& $1.537 \cdot 10^{-1}$ & $-1.597 \cdot 10^{0}$ &  $5.246 \cdot 10^{-4}$ \\
$2^{-2}$ &$ 9.288 \cdot 10^{-3 }$& $8.000 \cdot 10^{-2}$ & $-1.599 \cdot 10^{0}$ &  $5.876 \cdot 10^{-4}$ \\
$2^{-3}$ &$ 2.637 \cdot 10^{-3 }$& $4.402 \cdot 10^{-2}$ & $-1.599 \cdot 10^{0}$ &  $6.440 \cdot 10^{-4}$ \\
$2^{-4}$ &$ 7.531 \cdot 10^{-4 }$& $2.506 \cdot 10^{-2}$ & $-1.600 \cdot 10^{0}$ &  $9.170 \cdot 10^{-4}$ \\
$2^{-5}$ &$ 2.163 \cdot 10^{-4 }$& $1.455 \cdot 10^{-2}$ & $-1.600 \cdot 10^{0}$ &  $1.764 \cdot 10^{-3}$ \\
$2^{-6}$ &$ 6.241 \cdot 10^{-5 }$& $8.537 \cdot 10^{-3}$ & $-1.600 \cdot 10^{0}$ &  $4.119 \cdot 10^{-3}$ \\
$2^{-7}$ &$ 1.807 \cdot 10^{-5 }$& $5.041 \cdot 10^{-3}$ & $-1.600 \cdot 10^{0}$ &  $1.206 \cdot 10^{-2}$ \\
$2^{-8}$ &$ 5.227 \cdot 10^{-6 }$& $2.987 \cdot 10^{-3}$ & $-1.600 \cdot 10^{0}$ &  $4.056 \cdot 10^{-2}$ \\
\hline
    \end{tabular}
    \caption{Convergence results for system \eqref{eq:bvp_pol_sys} in decreasing knot size $h$ with $q = 1$, $\eps = 10^{-10}.$}
    \label{tab:conv_pol_h}
\end{table}
%%%%
\begin{table}[h]
    \footnotesize
    \centering
    \begin{tabular}{c|c|c|c|c}
$q$ & mean weighted error & sup weighted error & approx. $\Delta_T(\tilde{x}_0)$  & total time (s) \\ \hline
$1$ &$ 2.809 \cdot 10^{-5 }$& $6.081 \cdot 10^{-3}$  & $-1.600 \cdot 10^{0}$ & $4.061 \cdot 10^{-2}$ \\
$2$ &$ 1.478 \cdot 10^{-5 }$& $3.952 \cdot 10^{-3}$  & $-1.600 \cdot 10^{0}$ & $2.445 \cdot 10^{-2}$ \\
$4$ &$ 7.486 \cdot 10^{-6 }$& $2.484 \cdot 10^{-3}$  & $-1.600 \cdot 10^{0}$ & $7.687 \cdot 10^{-2}$ \\
$8$ &$ 3.701 \cdot 10^{-6 }$& $1.521 \cdot 10^{-3}$  & $-1.600 \cdot 10^{0}$ & $2.741 \cdot 10^{-1}$ \\
$16$ &$ 1.808 \cdot 10^{-6 }$& $9.224 \cdot 10^{-4}$  & $-1.600 \cdot 10^{0}$ & $1.038 \cdot 10^{0}$ \\
\hline
    \end{tabular}
    \caption{Convergence results for system \eqref{eq:bvp_pol_sys}, increasing splines order $q$ with $h = 10^{-2}$, $\eps = 10^{-10}$.}
    \label{tab:conv_pol_q}
\end{table}
%%%
\begin{table}[h!]
\footnotesize
    \centering
    \begin{tabular}{c|c|c|c|c|c}
$\varepsilon$ & mean weighted error & sup weighted error & approx. $\Delta_T(\tilde{x}_0)$  & total time (s) & $x^{q,\varepsilon}(\varepsilon)$ \\ \hline
$2^{-1}$ &$ 2.224 \cdot 10^{-1 }$& $2.224 \cdot 10^{-1}$ & $-1.563 \cdot 10^{0}$ &  $3.561 \cdot 10^{-2}$ & $-2.770 \cdot 10^{-1}$ \\
$2^{-2}$ &$ 7.479 \cdot 10^{-2 }$& $7.479 \cdot 10^{-2}$ & $-1.590 \cdot 10^{0}$ &  $7.863 \cdot 10^{-3}$ & $2.569 \cdot 10^{-1}$ \\
$2^{-3}$ &$ 2.444 \cdot 10^{-2 }$& $2.444 \cdot 10^{-2}$ & $-1.597 \cdot 10^{0}$ &  $7.788 \cdot 10^{-3}$ & $7.353 \cdot 10^{-1}$ \\
$2^{-4}$ &$ 7.886 \cdot 10^{-3 }$& $7.886 \cdot 10^{-3}$ & $-1.599 \cdot 10^{0}$ &  $7.927 \cdot 10^{-3}$ & $1.181 \cdot 10^{0}$ \\
$2^{-5}$ &$ 2.528 \cdot 10^{-3 }$& $2.528 \cdot 10^{-3}$ & $-1.600 \cdot 10^{0}$ &  $7.642 \cdot 10^{-3}$ & $1.622 \cdot 10^{0}$ \\
$2^{-6}$ &$ 8.079 \cdot 10^{-4 }$& $8.079 \cdot 10^{-4}$ & $-1.600 \cdot 10^{0}$ &  $7.992 \cdot 10^{-3}$ & $2.083 \cdot 10^{0}$ \\
$2^{-7}$ &$ 8.866 \cdot 10^{-4 }$& $8.866 \cdot 10^{-4}$ & $-1.600 \cdot 10^{0}$ &  $7.747 \cdot 10^{-3}$ & $2.585 \cdot 10^{0}$ \\
$2^{-8}$ &$ 1.700 \cdot 10^{-3 }$& $1.700 \cdot 10^{-3}$ & $-1.600 \cdot 10^{0}$ &  $7.690 \cdot 10^{-3}$ & $3.151 \cdot 10^{0}$ \\
$2^{-9}$ &$ 2.559 \cdot 10^{-3 }$& $2.559 \cdot 10^{-3}$ & $-1.600 \cdot 10^{0}$ &  $7.489 \cdot 10^{-3}$ & $3.802 \cdot 10^{0}$ \\
\hline
    \end{tabular}
    \caption{Convergence results for system \eqref{eq:bvp_pol_sys} in decreasing $\eps$ with $q = 1$, $h = 1/2$.}
    \label{tab:conv_pol_eps}
\end{table}

\begin{figure}
    \centering
    \begin{subfigure}[t]{1\linewidth}
        \includegraphics[width=\linewidth]{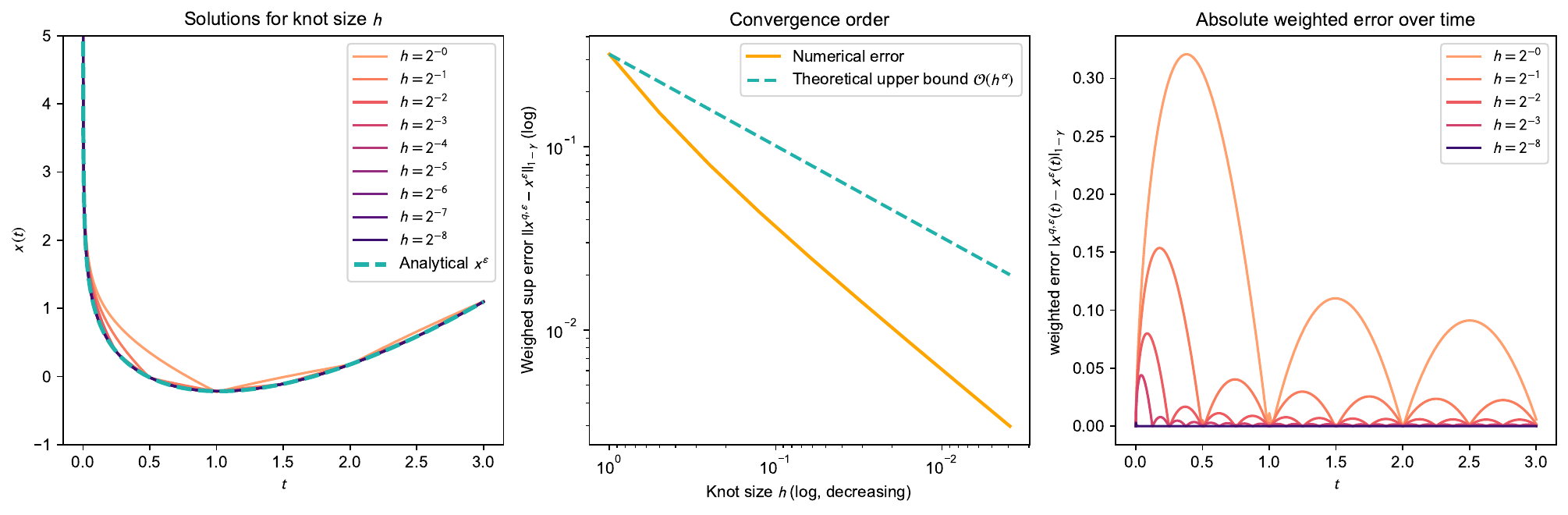}
        \caption{Convergence results for knot size $h$ with $q = 1$, $\eps = 10^{-10}.$}
    \end{subfigure}
    \vspace{0.5cm}
    \\
    \begin{subfigure}[t]{1\linewidth}
        \includegraphics[width=\linewidth]{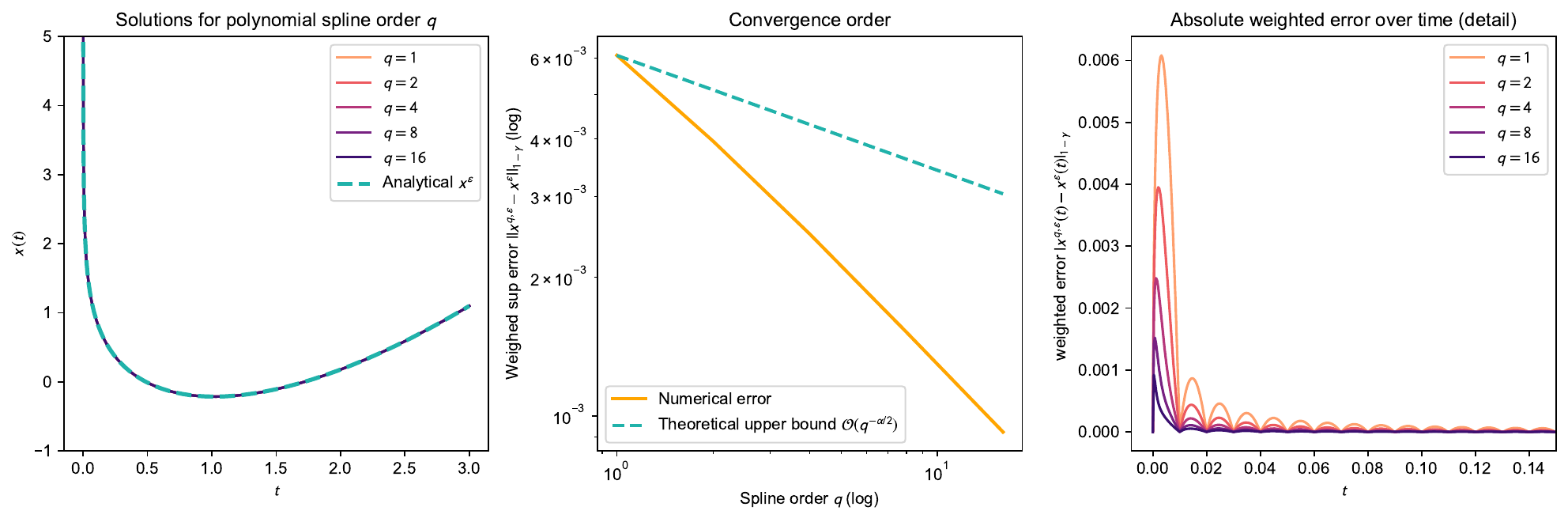}
        \caption{Convergence results for polynomial order $q$ with $h = 10^{-2}$, $\eps = 10^{-10}.$}
    \end{subfigure}
    \vspace{0.5cm}
    \\
    \begin{subfigure}[t]{1\linewidth}
        \includegraphics[width=\linewidth]{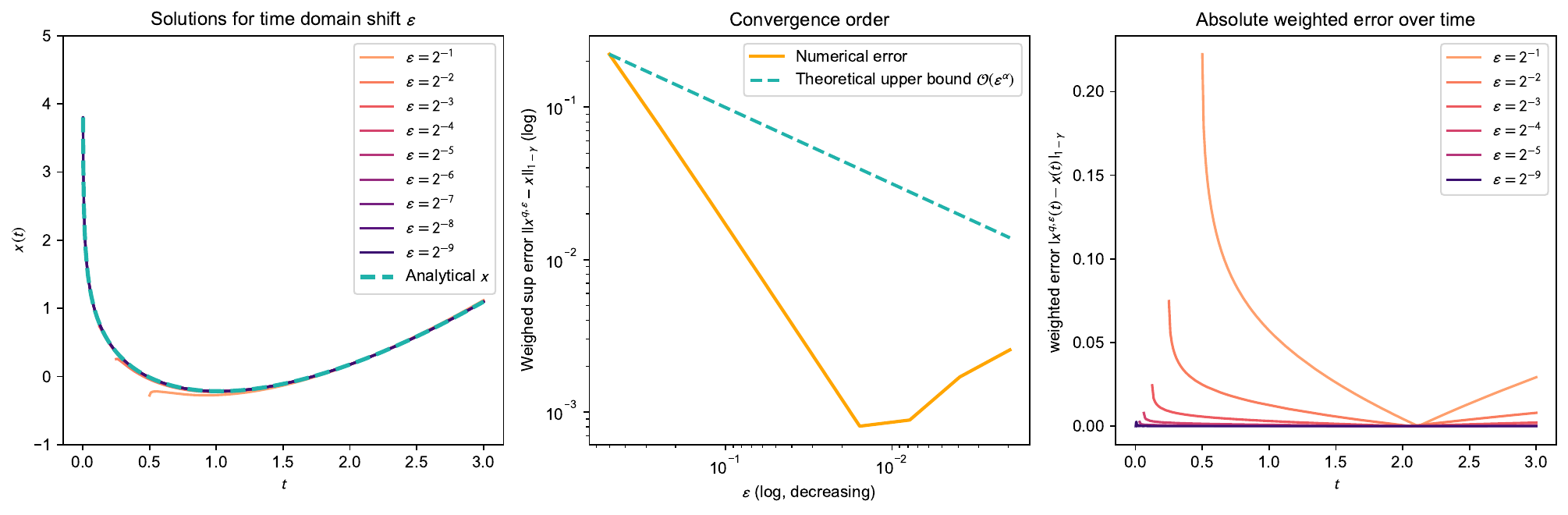}
        \caption{Convergence results for $\eps$ with $q = 1$, $h = 10^{-2}$.}
    \end{subfigure}
    \caption{Approximations, convergence rates and error as a function of $t$ in knot size $h$, polynomial order $q$ and time-shift $\eps$ for the perturbed IVP corresponding to system \eqref{eq:bvp_pol_sys} with $\alpha, \beta = 1/2$, $k=0.9$, $\tilde{x}_0 = 1$, $T=3$.
    }
    \label{fig:pol_num_res}
\end{figure}

Errors are computed relative to the analytical functions $x^{\eps}(t, \tilde{x}_0) : (\eps, T] \to \mathbb{R}$ and $x(t, \tilde{x}_0): (0,T] \to \mathbb{R}$, obtained from integral equations \eqref{eq:int_eq_eps} and \eqref{eq:ivp_pert_int_eq} respectively.
We note that $\Delta_T(\tilde{x}_0) = -1.5997$ for $x(t, \tilde{x}_0)$.\medskip

Results show convergence to the analytical solution $x^{\eps}$ with satisfactory low error already for higher values of $h$ and low values of $q$.
We note that all setups converge in one iteration, corresponding to the results of Theorem~\ref{thm:2_bvp_thm_1}~and~\ref{thm:numerical_conv} for $K = 0$.
Convergence rates satisfy the analytical upper bounds and all setup show stable behavior for the value of $\Delta_T(\tilde{x}_0)$, with values close to the analytical value.
For convergence in $h$ and $q$, the largest errors are consistently found in the first knot.
We conjecture that this is likely caused by numerical rounding errors induced by the $t^{\gamma-1}$ term of $\splineconmap{}$.
In terms of running time, a smaller knot size $h$ and larger polynomial order $q$ generally increase the total runtime as expected, since more computations are required.
Furthermore, decreasing $\eps$ increases computational time, possibly a result of taking a larger time domain.
Finally, convergence of $x^{q,\eps}(t, \tilde{x}_0)$ to $x(t, \tilde{x}_0)$ is shown as $\eps \downarrow 0$. 
Here, we note that the error does not reduce after $\eps =2^{-7}$.
We conjecture this effect stems from errors caused by the knot size of $h=10^{-2}$ dominating the total absolute error.
Hence, it is likely that reducing $\eps$ does not yield much error reduction after $\eps < h$.
\subsection{Nonlinear example}
\label{sec:num_res_nonlin}
Consider the following nonlinear BVP
\begin{align}
    \label{eq:bvp_nonlin}
    \begin{cases}
        \hilfder{0}{t}{\alpha,\beta}x(t)
        =
        \cos(x(t)4\pi t)/2\pi
        ,
        \quad &0<t<T=1/2, \quad \alpha=3/4, \quad 0 \leq \beta \leq 1,
        \\
        \rlint{0}{0}{1-\gamma}
        x(0)
        =
        \rlint{0}{T}{1-\gamma}
        x(T),
        \quad &\gamma = \alpha + \beta - \alpha \beta.
    \end{cases}
\end{align}
For the analytical and numerical domain of convergence of the corresponding perturbed IVP we consider $D = \mathbb{R}$, thus trivially satisfying Assumption \ref{item:bvp_hilfer_domain} and \ref{item:ass_num_domain}.
For Assumption \ref{item:bvp_hilfer_bdd_lip} and \ref{item:ass_num_bdd_lip} we first note that
\begin{align}
    \bf{m} = \sup_{t\in (0,1/2]}\left|t^{1-\gamma} \cos(x(t)4\pi t)/2\pi\right| \leq \fr{(1/2)^{1-\gamma} }{2\pi}.
\end{align}
The Lipschitz coefficients of $f$ with regards to $x$ can be computed by fixing $t \in (0,1/2]$ and bounding the derivative w.r.t. $u(t) \in X_D[0, T]$ and $u(t) \in X_D[\eps, T]$ respectively.
This gives in both cases that 
\begin{align}
\left|\pder{}{u(t)}f(t, u(t))\right| = \left|2 t \sin(u(t) 4\pi t)\right|\leq 1.
\end{align}
Finally, for the spectral radius conditions \ref{item:bvp_hilfer_spectral_radius} and \ref{item:ass_num_spectral}, take a knot selection satisfying for knot size $h_i:= t_{i+1} - t_i$ for $ i\in\{0, \dots, k\}$:
\begin{align}
    \label{eq:knot_sel}
    h_i 
    =
    \begin{cases}
    \min\left(h_{\textrm{max}},\, c^{1/(1-\gamma)-1}t_i\right),
    \quad     &\text{if} \quad 0\leq \gamma < 1,
    \\
    h_{\textrm{max}} \quad &\text{if} \quad \gamma = 1,
    \end{cases}
\end{align}
ensuring $\left({t_{i+1}/{t_i}}\right)^{1-\gamma} \leq c$.
We take $c=3/2$, $t_0=\eps = 10^{-10}$, $h_\mathrm{max} = 10^{-2}$ and $q=1$ with an iteration convergence tolerance of $\mathrm{TOL}= 10^{-12}$.
Then, numerically computing $\Xi$ and $\Omega_\mathcal{A}^q$ gives $\Xi + \Omega_\mathcal{A} \leq 0.7064$ for all $\beta \in [0,1]$, as presented in Figure \ref{fig:Xi_omega}.

\begin{figure}[h]
    \centering
    \includegraphics[width=0.5\linewidth]{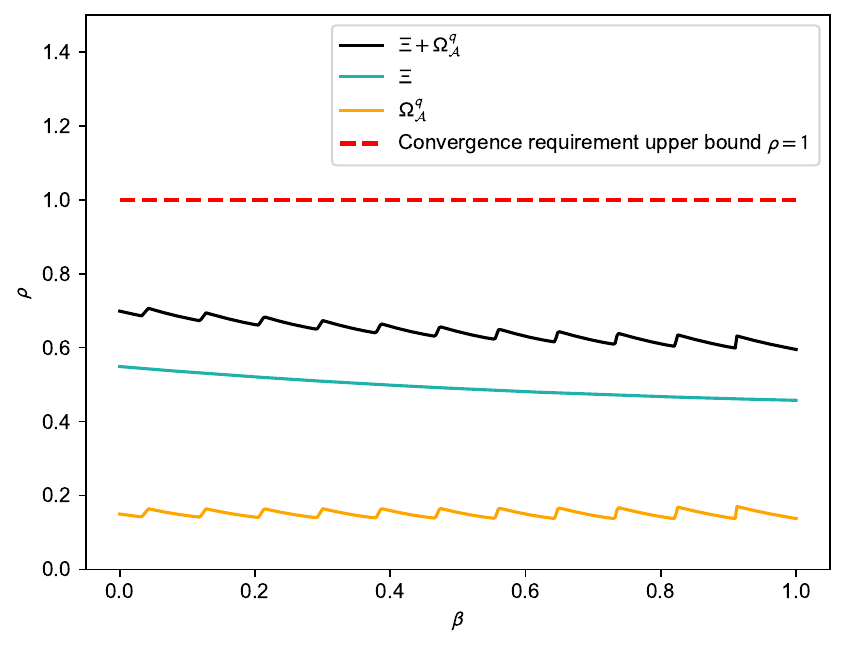}
    \caption{Values of $\Xi$ and $\Omega_\mathcal{A}^q$ for various values of $\beta$ for system \eqref{eq:bvp_nonlin} with knot selection \eqref{eq:knot_sel}.}
    \label{fig:Xi_omega}
\end{figure}

\newpage
Combined with the fact that $K\leq 1$ as established above, we thus have satisfied Assumption \ref{item:bvp_hilfer_spectral_radius} and \ref{item:ass_num_spectral}.
Since all requirements are satisfied, we have a guarantee of existence of a unique analytical and convergence of a numerical solution by Theorem \ref{thm:2_bvp_thm_1} and \ref{thm:numerical_conv}.
Results for various values of $\beta$ are presented in Table~\ref{tab:nonlin_mult_beta_conv_stats} and Figure~\ref{fig:nonlin_mult_beta}.

\begin{table}[h]
    \centering
    \begin{tabular}{c|c|c|c|c|c|c}
$\beta$ & knots & $x(\varepsilon)$ & $\Delta_T(\tilde{x}_0)$ & time (s) &  avg. it. / knot & avg. time / knot \\ \hline
$0.00$ & $61$& $2.581 \cdot 10^{2}$ & $1.360 \cdot 10^{-1}$&  $1.114 \cdot 10^{-3}$& $2.951 \cdot 10^{-1}$ & $1.827 \cdot 10^{-5}$\\ 
$0.20$ & $59$& $8.589 \cdot 10^{1}$ & $1.126 \cdot 10^{-1}$&  $8.758 \cdot 10^{-4}$& $3.051 \cdot 10^{-1}$ & $1.484 \cdot 10^{-5}$\\ 
$0.40$ & $56$& $2.843 \cdot 10^{1}$ & $9.338 \cdot 10^{-2}$&  $8.699 \cdot 10^{-4}$& $3.214 \cdot 10^{-1}$ & $1.553 \cdot 10^{-5}$\\ 
$0.60$ & $54$& $9.358 \cdot 10^{0}$ & $7.767 \cdot 10^{-2}$&  $8.091 \cdot 10^{-4}$& $3.148 \cdot 10^{-1}$ & $1.498 \cdot 10^{-5}$\\ 
$0.80$ & $52$& $3.066 \cdot 10^{0}$ & $6.512 \cdot 10^{-2}$&  $7.795 \cdot 10^{-4}$& $3.269 \cdot 10^{-1}$ & $1.499 \cdot 10^{-5}$\\ 
$1.00$ & $50$& $1.000 \cdot 10^{0}$ & $5.547 \cdot 10^{-2}$&  $9.671 \cdot 10^{-3}$& $3.400 \cdot 10^{-1}$ & $1.934 \cdot 10^{-4}$\\ 
\hline
    \end{tabular}
    \caption{Convergence results for numerical solutions of the perturbed IVP for system \eqref{eq:bvp_nonlin}.}
    \label{tab:nonlin_mult_beta_conv_stats}
\end{table}
\begin{figure}[h!]
    \centering
    \includegraphics[width=0.5\linewidth]{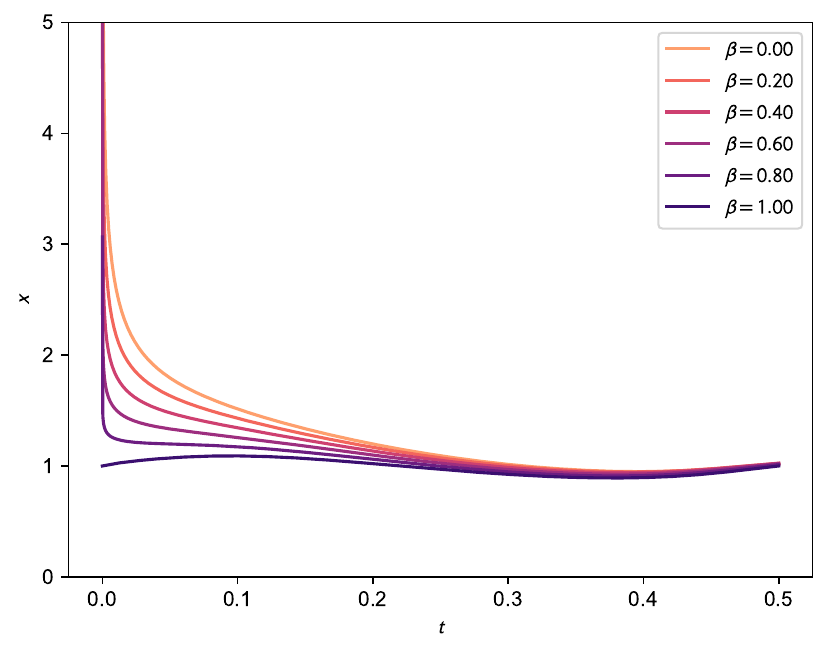}
    \caption{Numerical solutions of the perturbed IVP for system \eqref{eq:bvp_nonlin} for various values of $\beta$.}
    \label{fig:nonlin_mult_beta}
\end{figure}
\newpage
Results show convergence for all values of $\beta$, with a continuous transition in solutions from solutions to the Caputo problem of $\beta = 1$ to the Riemann-Liouville solutions for $\beta=0$.
As can be seen in Table \ref{tab:nonlin_mult_beta_conv_stats}, the number of knots decreases faster compared to the number of iterations needed for convergence as $\beta$ increases.
This might indicate that convergence knot requirement \eqref{eq:knot_sel} for $c = 3/2$ poses a stricter condition on knot size than would be required purely for consistent knot iteration computational times.\medskip

As discussed in Theorem \ref{thm:bvp_delta_0} and Remark \ref{rem:root_finding}, we can obtain solutions to the original system by satisfying $\Delta_T(\tilde{x}_0) = 0$.
Taking the problem above for $\beta = 1/2$ fixed, we apply a grid-search strategy, evaluating $\Delta_T(\tilde{x}_0)$ for $T \in \{\Delta T, 2\Delta T,\dots, 1/2\}$ with $\Delta T = 0.005$. 
We then look for a $T^*$ minimizing $|\Delta_T(\tilde{x}_0)|$, yielding a solution approximation $x^*$ to the original BVP \eqref{eq:bvp_nonlin}. 
Results of the grid search are presented in Figure \ref{fig:pol_delta_0}, with a visualization of several solution values presented in Figure~\ref{fig:root_finding_T_variations}.
\begin{figure}[h]
    \centering
    \includegraphics[width=0.5\linewidth]{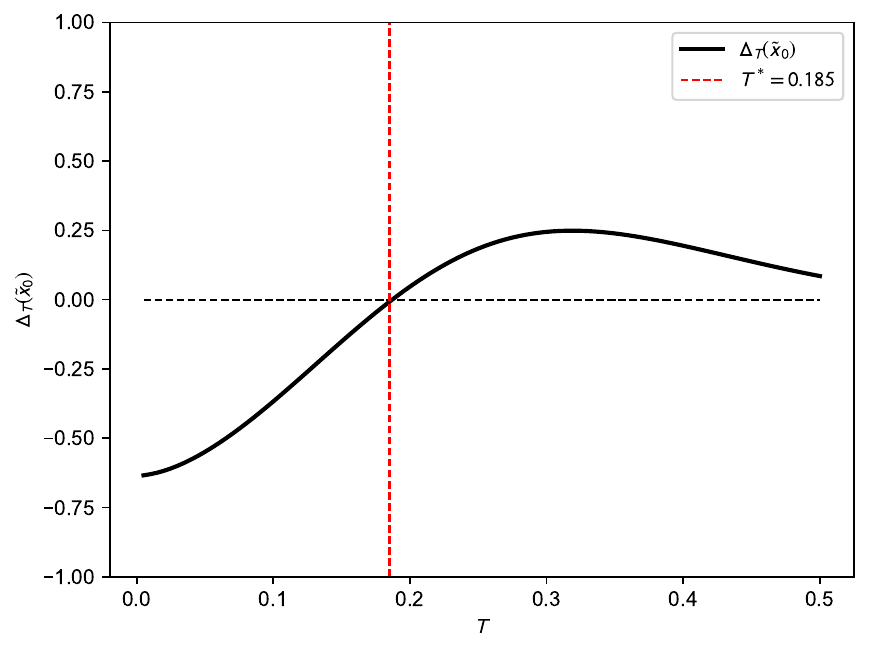}
    \caption{$\Delta_T(\tilde{x}_0)$ of the perturbed IVP corresponding to \eqref{eq:bvp_nonlin} for $\tilde{x}_0 = 1$ and $\beta = 1/2$ with $T \in \{\Delta T, 2\Delta T,\dots, 1/2\}$, $\Delta T = 0.005$. 
    $T^* = {\arg \min}_{T} |\Delta_T(\tilde{x}_0)|$ gives the corresponding approximating solution $x^*$ to the original system \eqref{eq:bvp_nonlin}.} 
    \label{fig:pol_delta_0}
\end{figure}
\begin{figure}[h!]
    \centering
    \includegraphics[width=0.5\linewidth]{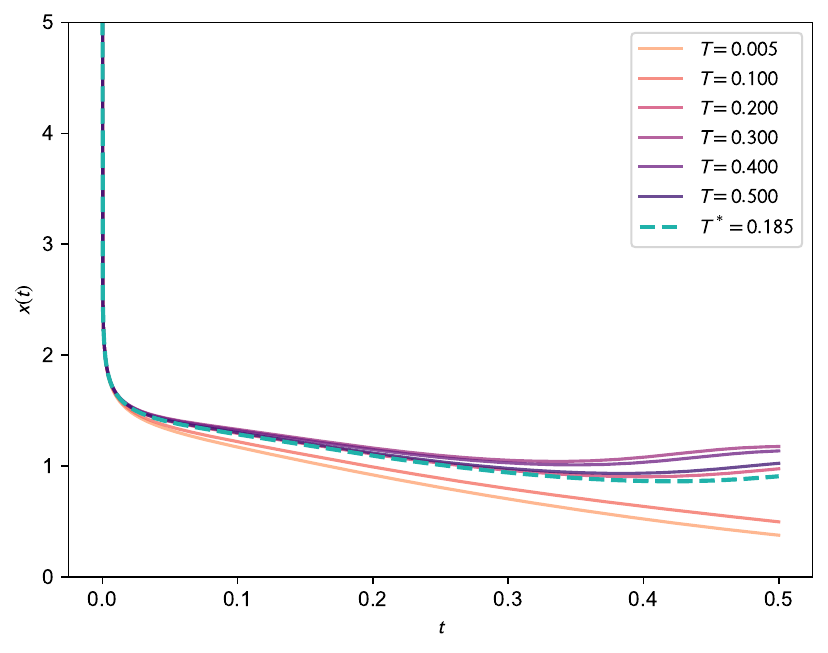}
    \caption{Solutions of the perturbed IVP corresponding to \eqref{eq:bvp_nonlin} for $\tilde{x}_0 = 1$ and $\beta = 1/2$ for selected values of $T$ as obtained in the grid search minimizing $|\Delta_T(\tilde{x}_0)|$. }
    \label{fig:root_finding_T_variations}
\end{figure}

\newpage 
We find $T^* = 0.185$ with corresponding $\Delta_T(\tilde{x}_0) = -0.0066$.
Furthermore, we have what appears to be continuous dependence of $\Delta_T(\tilde{x}_0)$ on $T$, providing means to minimize $\Delta_T(\tilde{x}_0)$ to arbitrary precision for a higher resolution on $\Delta T$.
\section{Conclusions}
\label{sec:conclusions}
In this paper, we present convergence requirements and the construction of analytical and numerical solutions to {fractional-periodic BVPs} with generalized Hilfer fractional derivatives of type $0\leq \beta \leq 1$ and order $0<\alpha<1$ as presented in \eqref{eq:bvp}.
The analytical results are presented in Theorem~\ref{thm:2_bvp_thm_1}, using a perturbed-IVP approach to find solutions in the weighted space $C_{1-\gamma}[0,T]$.
Our results generalize previous perturbed-IVP existence results for Caputo fractional derivative $(\beta=1)$ periodic BVPs to both the corresponding Riemann-Liouville operator problem $(\beta=0)$ and general Hilfer fractional derivative problems $(0<\beta<1)$.

We implement the analytical solutions numerically, constructing solution approximations with Bernstein splines.
Approximations are obtained on a modified time domain $\eps < t<T$, with $\eps>0$ small. 
Convergence requirements and asymptotic convergence rates to analytical solutions as $\eps \downarrow 0$ are presented in Theorem~\ref{thm:numerical_conv}.
Numerical experimentation results show error convergence rates satisfying the theoretical bounds.
The numerical method can numerically capture the singular behavior of solutions as $t\downarrow 0$ and is able to simulate nonlinear problems, as demonstrated in Section~\ref{sec:num_res}.
Moreover, we give an example of the apriori calculation of convergence requirements and an example of a grid-search to find the corresponding periodic recurrence time $T$ corresponding to the non-perturbed original BVP such that $\Delta_T(\tilde{x}_0) = 0$.

For future work, we note that apriori convergence guarantees to obtain $\Delta_T(\tilde{x}_0) = 0$ would improve the applicability of the method for applied problems, although we expect that this would in general pose stringent requirements for nonlinear systems.
Another possible strategy would be to formalize a root-finding procedure on $\Delta_T(\tilde{x}_0)$.
Finally, another general extension of the method would be to provide an application for fractional PDEs, incorporating FBVPS by utilizing a multidimensional splines setup.
\section*{CRediT author statement}
Niels Goedegebure: Conceptualization, Methodology, Software, Validation, Formal analysis, Investigation, Writing - Original Draft, Visualization, Project administration.
Kateryna Marynets: Conceptualization, Methodology, Writing - Review \& Editing, Supervision.
\section*{Data availability}
Code and data is available on \url{https://github.com/ngoedegebure/fractional_splines}.
The code used in the examples of Section \ref{sec:num_res} can be found under \texttt{/examples/BVP-paper/} .
\section*{Declaration of competing interest}
The authors declare to have no conflict of interest.
\bibliographystyle{unsrtnat} 
\bibliography{references.bib}

@book{Lorentz1953,
author = {Lorentz, G. G.},
isbn = {{9780828403238}},
edition = {1986},
publisher = {Chelsea Publishing Company},
title = {{Bernstein Polynomials}},
year = {1953}
}

@article{Furati2012,
author = {Furati, K. M. and Kassim, M. D. and Tatar, N. E.},
doi = {10.1016/j.camwa.2012.01.009},
issn = {08981221},
journal = {Computers and Mathematics with Applications},
number = {6},
pages = {1616--1626},
publisher = {Elsevier Ltd},
title = {{Existence and uniqueness for a problem involving Hilfer fractional derivative}},
volume = {64},
year = {2012}
}

@book{Hilfer2000,
author = {Hilfer, R.},
title = {{Applications of Fractional Calculus in Physics}},
year = {2000},
publisher = {World Scientific Publishing Co. Pte. Ltd.},
doi = {10.1142/9789812817747_0002}
}

@article{Feckan2017,
author = {Fe{\v{c}}kan, M. and Marynets, K.},
doi = {10.1140/epjst/e2018-00017-9},
isbn = {2018000179},
journal = {European Physical Journal: Special Topics},
number = {16-18},
pages = {3681--3692},
title = {{Approximation approach to periodic BVP for fractional differential systems}},
volume = {226},
year = {2017}
}

@book{Samko1987,
author = {Samko, S.G. and Kilbas, A.A. and Marichev, O.I.},
isbn = {2881248640},
issn = {23646845},
title = {{Fractional Integrals and Derivatives}},
year = {1987},
publisher = {Gordon and Breach Science Publishers}
}

@book{Kilbas2006,
author = {Kilbas, A.A. and Srivastava, H. M. and Trujillo, J.J.},
title = {{Theory and Applications of Fractional Differential Equations}},
year = {2006},
publisher = {Elsevier North-Holland}
}

@book{Podlubny1999,
author = {Podlubny, I.},
isbn = {0-12 558840-2},
publisher = {Academic Press},
title = {{Fractional Differential Equations}},
year = {1999}
}

@article{Feckan2023predprey,
author = {Fe{\v{c}}kan, M. and Marynets, K.},
doi = {10.58997/ejde.2023.58},
issn = {10726691},
journal = {Electronic Journal of Differential Equations},
title = {{Non-Local Fractional Boundary Value Problems With Applications To Predator-Prey Models}},
volume = {2023},
year = {2023}
}

@article{Osborn1968,
  title={The incomplete beta function and its ratio to the complete beta function},
  author={Osborn, D. and Madey, R.},
  journal={Mathematics of Computation},
  volume={22},
  number={101},
  pages={159--162},
  year={1968}
}

@article{Podlubny1999control,
  title={{Fractional-order systems and $PI^\lambda D^\mu$- controllers}},
  author={Podlubny, I.},
  journal={{IEEE Transactions on automatic control}},
  volume={44},
  number={1},
  pages={208--214},
  year={1999},
  publisher={IEEE}, 
    doi = {10.1109/9.739144}
}

@article{GGKM2025,
title = {The action of the Sonine kernel on periodic function},
journal = {Fractional Calculus and Applied Analysis},
volume = {28},
number = {4},
pages = {1681-1705},
year = {2025},
issn = {},
doi = {10.1007/s13540-025-00427-z},
author = {Garrappa, R. and Gorska, K. and  Kaslik, E. and Marynets K.},
}

@article{METZLER20001,
title = {The random walk's guide to anomalous diffusion: a fractional dynamics approach},
journal = {Physics Reports},
volume = {339},
number = {1},
pages = {1-77},
year = {2000},
issn = {0370-1573},
doi = {10.1016/S0370-1573(00)00070-3},
author = {Metzler, R. and Klafter, J.},
}

@mastersthesis{Goedegebure2025thesis,
    author = {Goedegebure, N.J.},
    title = {{The fractionally damped Van der Pol oscillator}},
    school = {Delft University of Technology},
    year = 2025,
}

@article{Goedegebure2025splinespreprint,
      title={{A numerical Bernstein splines approach for nonlinear initial value problems with Hilfer fractional derivative}}, 
      author={Goedegebure, N. and Marynets, K.},
      year={2025},
      eprint={2503.22335},
      journal={arXiv preprint},
      primaryClass={math.NA},
      doi = {10.48550/arXiv.2503.22335}
}

@article{Marynets2024,
abstract = {We introduce a successive approximations method to study one fractional periodic boundary value problem of the Hilfer-Prabhakar type. The problem is associated to the corresponding Cauchy problem, whose solution depends on an unknown initial value. To find this value we numerically solve the so-called 'determining system' of algebraic or transcendental equations. As a result, we determine an approximate solution of the studied problem, written in a closed form. Finally, we evaluate efficiency of our method on a nonlinear numerical example.},
author = {Marynets, K. and Tomovski, {\v{Z}}.},
doi = {10.1007/s40314-024-02644-3},
issn = {18070302},
journal = {Computational and Applied Mathematics},
number = {3},
pages = {1--20},
publisher = {Springer International Publishing},
title = {{Fractional periodic boundary value and Cauchy problems with Hilfer–Prabhakar operator}},
volume = {43},
year = {2024}
}

@article{FeckanMarynetsWang2023,
abstract = {We study a boundary value problem for a Caputo-type fractional differential equation subjected to periodic boundary conditions. For an auxiliary problem with the simplified right-hand side, we explicitly construct its unique solution. In addition, based on the theory of the topological index, we prove existence of at least one solution to the original problem.},
author = {Fe{\v{c}}kan, M. and Marynets, K. and Wang, J. R.},
doi = {10.1002/mma.9097},
issn = {10991476},
journal = {Mathematical Methods in the Applied Sciences},
number = {11},
pages = {11971--11982},
title = {{Existence of solutions to the generalized periodic fractional boundary value problem}},
volume = {46},
year = {2023}
}

@article{Karthikeyan2012,
abstract = {In this paper, we study existence and uniqueness of fractional integrodifferential equations with boundary value conditions. A new generalized singular type Gronwall inequality is given to obtain an important a priori bounds. Existence and uniqueness results of solutions are established by virtue of fractional calculus and fixed point method under some weak conditions. An example is given to illustrate the results. {\textcopyright} 2011 Elsevier B.V.},
author = {Karthikeyan, K. and Trujillo, J. J.},
doi = {10.1016/j.cnsns.2011.11.036},
issn = {10075704},
journal = {Communications in Nonlinear Science and Numerical Simulation},
number = {11},
pages = {4037--4043},
publisher = {Elsevier B.V.},
title = {{Existence and uniqueness results for fractional integrodifferential equations with boundary value conditions}},
volume = {17},
year = {2012}
}

@article{Zhai2013,
abstract = {In this paper, we are concerned with the existence and uniqueness of positive solutions for the following fractional boundary value problems given by-D0+$\alpha$u(t)=f(t,u(t))+g(t,u(t)),0{\textless}t{\textless}1,3{\textless}$\alpha$4,where D0+$\alpha$ is the standard Riemann-Liouville fractional derivative, subject either to the boundary conditions u(0)=u′(0)=u′(0)=u′(1)=0 or u(0)=u″(0)=u″(0)=0, u″(1)=$\beta$u″($\eta$) for $\eta$, $\beta$$\eta$$\alpha$-3∈(0, 1). Our analysis relies on a fixed point theorem of a sum operator. Our results can not only guarantee the existence of a unique positive solution, but also be applied to construct an iterative scheme for approximating it. Two examples are given to illustrate the main results. {\textcopyright} 2012 Elsevier B.V.},
author = {Zhai, Ch. and Yan, W. and Yang, Ch.},
doi = {10.1016/j.cnsns.2012.08.037},
isbn = {2010021002},
issn = {10075704},
journal = {Communications in Nonlinear Science and Numerical Simulation},
number = {4},
pages = {858--866},
publisher = {Elsevier B.V.},
title = {{A sum operator method for the existence and uniqueness of positive solutions to Riemann-Liouville fractional differential equation boundary value problems}},
volume = {18},
year = {2013}
}

@article{Yuldashev2020,
abstract = {In this paper, we consider a boundary value problem for a nonlinear partial differential equation of mixed type with Hilfer operator of fractional integro-differentiation in a positive rectangular domain and with spectral parameter in a negative rectangular domain. With respect to the first variable, this equation is a nonlinear fractional differential equation in the positive part of the considering segment and is a second-order nonlinear differential equation with spectral parameter in the negative part of this segment. Using the Fourier series method, the solutions of nonlinear boundary value problems are constructed in the form of a Fourier series. Theorems on the existence and uniqueness of the classical solution of the problem are proved for regular values of the spectral parameter. For irregular values of the spectral parameter, an infinite number of solutions of the mixed equation in the form of a Fourier series are constructed.},
author = {Yuldashev, T. K. and Kadirkulov, B. J.},
doi = {10.3390/AXIOMS9020068},
issn = {20751680},
journal = {Axioms},
number = {2},
title = {{Boundary value problem for weak nonlinear partial differential equations of mixed type with fractional hilfer operator}},
volume = {9},
year = {2020}
}

@article{Sitho2021,
author = {Sitho, S. and Ntouyas, S. K. and Samadi, A. and Tariboon, J.},
doi = {10.3390/math9091001},
issn = {22277390},
journal = {Mathematics},
number = {9},
pages = {1--19},
title = {{Boundary value problems for $\psi$-Hilfer type sequential fractional differential equations and inclusions with integral multi-point boundary conditions}},
volume = {9},
year = {2021}
}

@article{Pathak2018,
abstract = {In this work, we establish Lyapunov-type inequalities for fractional boundary value problems containing Hilfer derivative of order a , 1 {\textless} $\alpha$ ≤ 2 and type 0 ≤ {\ss} ≤ 1. We consider the boundary value problems with the Dirichlet, and a mixed set of Dirichlet and Neumann boundary conditions. We consider both integer and fractional order eigenvalue problems, determine a lower bound for the smallest eigenvalue using Lyapunov-type inequalities, and improve these bounds using semi maximum norm and Cauchy-Schwarz inequalities. We use the improved lower bounds to obtain intervals where a certain Mittag-Leffler functions have no real zeros. Further, we discuss the particular cases for the type {\ss} =0 and {\ss} =1, which give the results respectively for Riemann-Liouville and Caputo fractional boundary value as well as eigenvalue problems. For both the fractional and the integer order eigenvalue problems, we give a comparison between the smallest eigenvalue and its lower bounds obtained from the Lyapunov-type, semi maximum norm and Cauchy-Schwarz inequalities. Results show that the Lyapunov-type inequality gives the worse and semi maximum norm and Cauchy-Schwarz inequalities give better lower bound estimates for the smallest eigenvalues.},
author = {Pathak, N.},
doi = {10.7153/mia-2018-21-15},
issn = {13314343},
journal = {Mathematical Inequalities and Applications},
number = {1},
pages = {179--200},
title = {{Lyapunov-type inequality for fractional boundary value problems with Hilfer derivative}},
volume = {21},
year = {2018}
}

@article{Dhawan2024,
abstract = {In this paper, the authors have proposed the qualitative properties for the solution of the Hilfer implicit fractional differential equations (HIFDE) of order 1{\textless}ϱ≤2 involving nonlocal boundary conditions using fixed point theory. By imposing the growth conditions on a non-linear function the sufficient condition existence of a solution for the assumed problem is given by using Schaefer's fixed point theorem. The result for the existence and uniqueness of the solution has been established utilizing the Banach contraction principle using the Lipschitz property and following the existence and uniqueness result, a priori bound of the solution is also given. In addition to existence and uniqueness results, the Ulam-Hyers and Generalized Ulam-Hyers stability analysis for the solutions of HIFDE is also provided. Some examples are presented in order to demonstrate the applications of our findings.},
author = {Dhawan, K.and Vats, R. K. and Nain, A. K. and Shukla, A.},
doi = {10.1016/j.bulsci.2024.103401},
issn = {00074497},
journal = {Bulletin des Sciences Mathematiques},
pages = {103401},
publisher = {Elsevier Masson SAS},
title = {{Well-posedness and Ulam-Hyers stability of Hilfer fractional differential equations of order (1,2] with nonlocal boundary conditions}},
volume = {191},
year = {2024}
}

@article{Nuchpong2020,
abstract = {In this paper, we study boundary value problems of fractional integro-differential equations and inclusions involving Hilfer fractional derivative. Existence and uniqueness results are obtained by using the classical fixed point theorems of Banach, Krasnosel'skia, and Leray-Schauder in the single-valued case, while Martelli's fixed point theorem, nonlinear alternative for multi-valued maps, and Covitz-Nadler fixed point theorem are used in the inclusion case. Examples illustrating the obtained results are also presented.},
author = {Nuchpong, Ch. and Ntouyas, S. K. and Tariboon, J.},
doi = {10.1515/math-2020-0122},
issn = {23915455},
journal = {Open Mathematics},
number = {1},
pages = {1879--1894},
title = {{Boundary value problems of Hilfer-type fractional integro-differential equations and inclusions with nonlocal integro-multipoint boundary conditions}},
volume = {18},
year = {2020}
}

@article{Wongcharoen2020,
abstract = {We discuss the existence and uniqueness of solutions for the Langevin fractional differential equation and its inclusion counterpart involving the Hilfer fractional derivatives, supplemented with three-point boundary conditions by means of standard tools of the fixed-point theorems for single and multivalued functions. We make use of Banach's fixed-point theorem to obtain the uniqueness result, while the nonlinear alternative of the Leray-Schauder type and Krasnoselskii's fixed-point theorem are applied to obtain the existence results for the single-valued problem. Existence results for the convex and nonconvex valued cases of the inclusion problem are derived via the nonlinear alternative for Kakutani's maps and Covitz and Nadler's fixed-point theorem respectively. Examples illustrating the obtained results are also constructed. (2010) Mathematics Subject Classifications. This study is classified under the following classification codes: 26A33; 34A08; 34A60; and 34B15.},
author = {Wongcharoen, A. and Ahmad, B. and Ntouyas, S. K. and Tariboon, J.},
doi = {10.1155/2020/9606428},
issn = {16879139},
journal = {Advances in Mathematical Physics},
title = {{Three-Point Boundary Value Problems for the Langevin Equation with the Hilfer Fractional Derivative}},
volume = {2020},
year = {2020}
}

@book{Diethelm2010,
  title={The Analysis of Fractional Differential Equations: An Application-Oriented Exposition Using Differential Operators of Caputo Type},
  author={Diethelm, K.},
  isbn={9783642145742},
  lccn={2010933969},
  series={Lecture Notes in Mathematics},
  year={2010},
  publisher={Springer Berlin Heidelberg}
}

@book{Monje2010,
  title={Fractional-order Systems and Controls: Fundamentals and Applications},
  author={Monje, C.A. and Chen, Y.Q. and Vinagre, B.M. and Xue, D. and Feliu-Batlle, V.},
  isbn={9781849963350},
  lccn={2010934759},
  series={Advances in Industrial Control},
  year={2010},
  publisher={Springer London}
}

@article{Tavazoei2009_nonex_per,
  title={A proof for non existence of periodic solutions in time invariant fractional order systems},
  author={Tavazoei, M. S. and Haeri, M.},
  journal={Automatica},
  volume={45},
  number={8},
  pages={1886--1890},
  year={2009},
  publisher={Elsevier},
doi={10.1016/j.automatica.2009.04.001}
}

@article{Kaslik2012,
  title={Non-existence of periodic solutions in fractional-order dynamical systems and a remarkable difference between integer and fractional-order derivatives of periodic functions},
  author={Kaslik, E. and Sivasundaram, S.},
  journal={Nonlinear Analysis: Real World Applications},
  volume={13},
  number={3},
  pages={1489--1497},
  year={2012},
  publisher={Elsevier},
doi={10.1016/j.nonrwa.2011.11.013}
}

@book{ahmad2024nonlinear,
  title={{Nonlinear systems of fractional differential equations}},
  author={Ahmad, B. and Ntouyas, S. K.},
  year={2024},
  publisher={Springer},
doi = {10.1007/978-3-031-62513-8},
isbn = {9783031625121}
}

@article{Basua2025,
author = {Basua, D. and Bora, S. N.},
doi = {10.1016/j.fraope.2025.100383},
issn = {2773-1863},
journal = {Franklin Open},
number = {October},
pages = {100383},
publisher = {Elsevier Inc.},
title = {{Franklin Open Nonlinear analysis for Hilfer fractional differential equations}},
volume = {13},
year = {2025}
}

@article{Wang2015,
author = {Wang, J. and Zhang, Y.},
doi = {10.1016/j.amc.2015.05.144},
journal = {Applied Mathematics and Computation},
pages = {850--859},
publisher = {Elsevier Ltd.},
title = {{Nonlocal initial value problems for differential equations with Hilfer fractional derivative}},
volume = {266},
year = {2015}
}
\end{document}